\newcommand{\email}[1]{{\tt #1}}
\newcommand{\R}{\mathbb{R}}
\newcommand{\N}{\mathbb{N}}
\newcommand{\norm}[1]{\|#1\|}
\DeclareMathOperator{\ddist}{dist}
\newcommand{\dist}[1]{\ddist(#1)}
\newcommand{\mv}{\mid\;}
\newcommand{\B}{{\cal B}}
\newcommand{\I}{{\cal I}}
\newcommand{\G}{{\cal G}}
\newcommand{\K}{{\cal K}}
\newcommand{\M}{{\cal M}}
\newcommand{\U}{{\cal U}}
\newcommand{\Sp}{{\cal S}}
\newcommand{\F}{{\cal F}}
\newcommand{\Z}{{\cal Z}}
\newcommand{\setto}[1]{\mathop{\rightarrow}\limits^#1}
\newcommand{\longsetto}[1]{\mathop{\longrightarrow}\limits^{#1}}
\newcommand{\skalp}[1]{\langle #1\rangle}
\newcommand{\xb}{\bar x}
\newcommand{\yb}{\bar y}
\newcommand{\AT}[2]{{\textstyle{#1\atop#2}}}
\newcommand{\xba}{{\bar x^\ast}}
\newcommand{\oo}{o}
\newcommand{\OO}{{\cal O}}
\newcommand{\argmin}{\mathop{\rm arg\,min}}
\newcommand{\cl}{{\rm cl\,}}
\newcommand{\co}{{\rm conv\,}}
\newcommand{\gph}{\mathrm{gph}\,}
\newcommand{\dom}{\mathrm{dom}\,}
\newcommand{\epi}{\mathrm{epi}\,}
\newcommand{\tto}{\rightrightarrows}
\newcommand{\Limsup}{\mathop{{\rm Lim}\,{\rm sup}}}
\newcommand{\Liminf}{\mathop{{\rm Lim}\,{\rm inf}}}
\newcommand{\myvec}[1]{\left(\begin{array}{c}#1\end{array}\right)}
\newcommand{\SCD}{SCD\ }
\newcommand{\subreg}{{\rm subreg\,}}
\newcommand{\lsubreg}{{\rm l\mbox{-}subreg\,}}
\newcommand{\scdreg}{{\rm scd\,reg\;}}
\newcommand{\reg}{{\rm reg\,}}
\DeclareSymbolFont{largesymbolsA}{U}{txexa}{m}{n}
\DeclareMathSymbol{\varprod}{\mathop}{largesymbolsA}{16}
\newcommand{\ssstar}{semismooth$^{*}$ }
\newcommand{\rge}{{\rm rge\;}}
\newlength{\myAlgBox}
\newtheorem{theorem}{Theorem}[section]
\newtheorem{proposition}[theorem]{Proposition}
\newtheorem{remark}[theorem]{Remark}
\newtheorem{lemma}[theorem]{Lemma}
\newtheorem{corollary}[theorem]{Corollary}
\newtheorem{definition}[theorem]{Definition}
\newtheorem{example}[theorem]{Example}
\newtheorem{algorithm}{Algorithm}
\title{On (local) analysis of multifunctions via subspaces contained in graphs of generalized derivatives}
\author{Helmut Gfrerer\thanks{Institute of Computational Mathematics, Johannes Kepler University
Linz, A-4040 Linz, Austria; \email{helmut.gfrerer@jku.at}}
 \and   Ji\v{r}\'{i} V. Outrata\thanks{Institute of Information Theory and Automation, Czech Academy of
 Sciences, 18208 Prague, Czech Republic, and Centre for
              Informatics and Applied Optimization, Federation University of Australia, POB 663,
              Ballarat,  Vic 3350, Australia,  \email{outrata@utia.cas.cz}}
}
\date{}
\begin{document}
\maketitle
{\footnotesize
{\bf Abstract.} The paper deals with a comprehensive theory  of mappings, whose local behavior can be described by means of linear subspaces, contained in the graphs of two (primal and dual) generalized derivatives. This class of mappings includes
the graphically Lipschitzian mappings and thus a number of multifunctions, frequently arising in optimization and
equilibrium problems. The developed theory makes use of new generalized derivatives, provides us with some calculus
rules and reveals a number of interesting connections. In particular, it enables us to construct a  modification of the semismooth* Newton method with improved convergence properties and to derive a generalization of Clarke's Inverse Function Theorem to multifunctions together with new efficient characterizations of strong metric (sub)regularity and tilt stability.

{\bf Key words.} generalized derivatives, second-order theory, strong metric (sub)regularity, semismoothness${}^*$.

{\bf AMS Subject classification.} 65K10, 65K15, 90C33.
}

\section{Introduction}
When implementing the \ssstar Newton method \cite{GfrOut21} for solving an inclusion of the form
\[0\in F(x)\]
with some set-valued mapping $F:\R^n\tto\R^n$, we observed  that it is advantageous to work with linear subspaces $L\subseteq \R^n\times\R^n$ having dimension $n$ and  contained in the graph of the limiting coderivative, i.e.,
\[L\subseteq \gph D^*F(x,y)\]
at points $(x,y)\in\gph F$.

However, this paper goes far beyond the analysis of the above issue  and presents a comprehensive study of a class of mappings, whose local behavior can be described by appropriately constructed linear subspaces. This class turns out to be rather broad and the developed theory  helps us both to suggest an efficient modification of the \ssstar Newton method as well as to derive a number of new results concerning strong metric subregularity, strong metric regularity and tilt stability. More precisely, for the mentioned mapping $F$, primal and dual generalized derivatives are introduced, whose elements are subspaces of dimension $n$. In order to define these derivatives, we consider points in the graph of the mapping where the tangent cone amounts to a subspace and then perform an  outer limiting operation in a certain compact metric space.

Our construction is motivated by the definition of the  B-subdifferential (Bouligand-subdifferential) for  single-valued mappings,  whose elements are given as limit of Jacobians at points where the mapping is Fr\'echet differentiable. Note that the tangent cone to the graph of a map is a subspace whenever the mapping is differentiable at the point under consideration. Instead of computing limits of matrices, we consider the limit of subspaces given by the graph of the linear mappings induced by the matrices. When the mapping is Lipschitzian, then we obtain a one-to-one correspondence between the new primal generalized derivative and the B-subdifferential. However, for non-Lipschitzian single-valued mappings there will be a difference because we are considering limits of subspaces in a compact metric space whereas the underlying matrices can be unbounded.

There are some relations between our generalized derivatives and existing ones. The  dual derivative consists of subspaces  contained in the limiting coderivative and the elements of the primal derivative are subspaces contained in the so-called outer limiting graphical derivative. To the best of our knowledge, the latter has not yet been considered in the literature and is contained in the so-called strict graphical derivative.

Our theory is not applicable to arbitrary mappings. However, as already mentioned, the class of mappings which are suited for our approach, is rather broad and important for applications. In particular, every mapping which is graphically Lipschitzian, i.e., its graph coincides under some change of coordinates with the graph of a locally Lipschitzian mapping, belongs to this class. Graphically Lipschitzian mappings have been already considered by Rockafellar \cite{Ro85}. E.g., locally maximally hypomonotone mappings like the subdifferential mapping of prox-regular and subdifferentially continuous functions possess this property \cite{PolRo96}. Thus, our approach is particularly suitable for second-order theory and we will establish a strong relationship with the so-called quadratic bundle introduced in the recent paper \cite{Ro21}.
Note that in  \cite{Ro85} also a limit of tangent spaces  has been considered. However, in \cite{Ro85} an inner limit with respect to the usual set-convergence has been used yielding a different sort of results.

Within the framework of the new theory one can introduce a new regularity notion leading to an adaptation of the \ssstar Newton method. This notion is weaker than metric regularity and enables us to streamline the algorithm and to relax the assumptions, ensuring its locally superlinear convergence. Under the respective regularity condition it is also possible to show that a \ssstar mapping  is strongly metrically subregular, not only at the reference point itself but also on a neighborhood of it.  It seems that this somewhat extended property of strong metric subregularity {\em around} the reference point has not been considered yet. As a byproduct, we present a characterization of this property by means of the outer limiting graphical derivative.

Finally we turn our attention to the property of strong metric regularity. Since strongly regular mappings are graphically Lipschitzian  by the definition, the preceding theory enables us to reveal some interesting new connections. In particular, one obtains a generalization of Clarke's Inverse Function Theorem to set-valued mappings and, when applied to specific problem classes, these results lead to new characterizations of strong metric regularity for locally maximally monotone operators and to a new characterization of tilt-stability. Compared with existing characterizations, the new ones have the advantage, that not the whole strict graphical derivative or limiting coderivative must be checked (as, e.g., in \cite{KlKum02,DoFra13,PolRo98}) but only a condition on the subspaces contained in its graph. In this way the arsenal of available criteria for strong metric regularity (cf. \cite{Ro80, DoRo96, KlKum02, Gow04, DoFra13, DoRo14}) and tilt stability (cf.\cite{PolRo98, DruLew13, MoNg15}) is enriched.

The plan of the paper is as follows. After the Preliminaries, devoted to relevant notions from variational analysis, in Section 3 we introduce and analyze the crucial class of SCD (subspace containing derivative) mappings. In their analysis we make use of the mentioned generalized derivatives, for which some basic calculus rules are  developed and exact formulas in case of graphically Lipschitzian mappings are provided. In Section 4 we introduce the notion of SCD-regularity, which plays a central role in the subsequent sections. Section 5 deals with the adaptation of the \ssstar Newton method to SCD mappings. The property of strong metric subregularity around a point is characterized in Section 6. Finally, in Section 7 we present a generalization of Clarke's Inverse Function Theorem and new characterizations of strong metric regularity and tilt stability for various classes of SCD mappings.
\if
Section 4 deals with the adaptation of the \ssstar Newton method to
SCD mappings. The convergence analysis leads in this context to the notion of SCD regularity, whose relationship with the strong metric subregularity around a point is established in Section 5. Finally, in Section 6 we present a generalization of Clarke's Inverse Function Theorem and new characterizations of strong metric regularity and tilt stability for various classes of SCD mappings.
\fi

The following notation is employed. Given a linear subspace $L\subseteq \R^n$, $ L^\perp$ denotes its orthogonal
complement and, for a closed cone $K$ with vertex at the origin, $K^\circ$ signifies its (negative) polar.
Further, given a multifunction $F$, $\gph F:=\{(x,y)\mv y\in F(x)\}$ stands for its
graph. For an element $u\in\R^n$, $\norm{u}$ denotes its Euclidean norm and  $\B_\delta(u)$ denotes the closed ball around $u$
with radius $\delta$. In a product space we use the norm $\norm{(u,v)}:=\sqrt{\norm{u}^2+\norm{v}^2}$. Given
an $m\times n$ matrix $A$, we employ the operator norm $\norm{A}$ with respect to the Euclidean norm and we denote the range of A by $\rge A$. Given a set $\Omega\subset\R^s$, we define the distance of a point $x$ to $\Omega$ by $d_\Omega(x):=\dist{x,\Omega}:=\inf\{\norm{y-x}\mv y\in\Omega\}$ and the indicator function is denoted by $\delta_\Omega$. When a mapping $F:\R^n\to\R^m$ is differentiable at $x$, we denote by $\nabla F(x)$ its Jacobian.

\section{Preliminaries}
Throughout the whole paper, we will frequently use  the following basic notions of modern
variational analysis. All the sets under consideration are supposed to be {\em locally closed} around the points in question without further mentioning.
 \begin{definition}\label{DefVarGeom}
 Let $A$  be a  set in $\mathbb{R}^{s}$ and let $\bar{x} \in A$. Then
\begin{enumerate}
 \item [(i)]The  {\em tangent (contingent, Bouligand) cone} to $A$ at $\bar{x}$ is given by
 \[T_{A}(\bar{x}):=\Limsup\limits_{t\downarrow 0} \frac{A-\bar{x}}{t}\]
   and the {\em paratingent cone} to $A$ at $\xb$ is given by
 \[T^P_A(\xb):=\Limsup\limits_{\AT{x\setto{{A}}\xb}{t\downarrow 0}} \frac{A- x}{t}\]
 \item[(ii)] The set
 \[\widehat{N}_{A}(\bar{x}):=(T_{A}(\bar{x}))^{\circ}\]
 is the {\em regular (Fr\'{e}chet) normal cone} to $A$ at $\bar{x}$, and

 \[N_{A}(\bar{x}):=\Limsup\limits_{\stackrel{A}{x \rightarrow \bar{x}}} \widehat{N}_{A}(x)\]
 is the {\em limiting (Mordukhovich) normal cone} to $A$ at $\bar{x}$. Given a direction $d
 \in\mathbb{R}^{s}$,
\[ N_{A}(\bar{x};d):= \Limsup\limits_{\stackrel{t\downarrow 0}{d^{\prime}\rightarrow
 d}}\widehat{N}_{A}(\bar{x}+ td^{\prime})\]
 is the {\em directional limiting normal cone} to $A$ at $\bar{x}$ {\em in direction} $d$.
 \end{enumerate}

\end{definition}
In this definition ''$\Limsup$'' stands for the Painlev\' e-Kuratowski {\em outer (upper) set limit}, see, e.g.,\cite{AubFra90}.
If $A$ is convex, then $\widehat{N}_{A}(\bar{x})= N_{A}(\bar{x})$ amounts to the classical normal cone in
the sense of convex analysis and we will  write $N_{A}(\bar{x})$. By the definition, the limiting normal
cone coincides with the directional limiting normal cone in direction $0$, i.e.,
$N_A(\bar{x})=N_A(\bar{x};0)$, and $N_A(\bar{x};d)=\emptyset$ whenever $d\not\in T_A(\bar{x})$.

The above listed cones enable us to describe the local behavior of set-valued maps via various
generalized derivatives. All the set-valued mappings under consideration are supposed to have {\em locally closed graph} around the points in question.

\begin{definition}\label{DefGenDeriv}
Consider a  multifunction $F:\R^n\tto\R^m$ and let $(\xb,\yb)\in \gph F$.
\begin{enumerate}
\item[(i)]
 The multifunction $DF(\xb,\yb):\R^n\tto\R^m$ given by $\gph DF(\xb,\yb)=T_{\gph F}(\xb,\yb)$ is called the {\em graphical derivative} of $F$ at $(\xb,\yb)$.
\item[(ii)]
 The multifunction $D_*F(\xb,\yb):\R^n\tto\R^m$ given by $\gph D_*F(\xb,\yb)=T^P_{\gph F}(\xb,\yb)$ is called the {\em  strict (paratingent) derivative} of $F$ at $(\xb,\yb)$.
\item[(iii)]
 The multifunction $\widehat D^\ast F(\xb,\yb ): \R^m\tto\R^n$  defined by
 \[ \gph \widehat D^\ast F(\xb,\yb )=\{(y^*,x^*)\mv (x^*,-y^*)\in \widehat N_{\gph F}(\xb,\yb)\}\]
is called the {\em regular (Fr\'echet) coderivative} of $F$ at $(\xb,\yb )$.
\item [(iv)]  The multifunction $D^\ast F(\xb,\yb ): \R^m \tto \R^n$,  defined by
 \[ \gph D^\ast F(\xb,\yb )=\{(y^*,x^*)\mv (x^*,-y^*)\in N_{\gph F}(\xb,\yb)\}\]
is called the {\em limiting (Mordukhovich) coderivative} of $F$ at $(\xb,\yb )$.
\item [(v)]
 Given a pair of directions $(u,v) \in \R^n \times \R^m$, the
 multifunction
 $D^\ast F((\xb,\yb ); (u,v)): \R^m\tto\R^n$, defined by
\begin{equation*}%\label{eq-150}
\gph D^\ast  F((\xb,\yb ); (u,v))=\{(y^*,x^*)\mv (x^*,-y^*)\in N_{\gph F}((\xb,\yb); (u,v))\}
\end{equation*}
is called the {\em directional limiting coderivative} of $F$ at $(\xb,\yb )$ {\em in direction} $(u,v)$.
\end{enumerate}
\end{definition}
 The directional limiting
normal cone and coderivative were introduced by the first author in \cite{Gfr13a} and various properties
of these objects can be found also in \cite{GO3} and in the references therein. Note that $D^\ast  F(\xb,\yb
)=D^\ast  F((\xb,\yb ); (0,0))$ and that $\dom D^\ast  F((\xb,\yb ); (u,v))=\emptyset$ whenever $v\not\in
DF(\xb,\yb)(u)$.

Note that by \cite[Proposition 6.6]{RoWe98} and the definition of the limiting coderivative we have
\begin{equation}\label{EqLimCoderivLimsup}\gph D^*F(\xb,\yb)=\Limsup_{(x,y)\longsetto{{\gph F}}(\xb,\yb)}\gph D^*F(x,y).\end{equation}

If $F$ is single-valued, we can omit the second argument and write $DF(x)$, $\widehat D^*F(x),\ldots$ instead of $DF(x,F(x))$, $\widehat D^*F(x,F(x)),\ldots$. However, be aware that when considering limiting objects at $x$ where $F$ is not continuous, it is not enough to consider only sequences $x_k\to x$ but we must work with sequences $(x_k,F(x_k))\to(x,F(x))$.

\begin{definition}
  Let $U\subset \R^n$ be open and let $F:U\to \R^m$ be a mapping. The {\em B-subdifferential} of $F$ at $x\in U$ is defined as
  \begin{equation}
    \overline{\nabla} F(x):=\{A\mv \exists x_k\to x: \mbox{$F$ is Fr\'echet differentiable at $x_k$ and }A=\lim_{k\to\infty}\nabla F(x_k)\}
  \end{equation}
\end{definition}
Recall that the Clarke Generalized Jacobian is given by $\co\overline{\nabla} F(x)$, i.e., the convex hull of the B-subdifferential.

There exists the following relation between the B-subdifferential and the coderivative of $F$, which states that every element from the B-subdifferential defines a certain subspace contained in the graph of the coderivative.
\begin{proposition}\label{PropBSubdiffCoderiv}
  Let $U\subset \R^n$ be open and let $F:U\to \R^m$ be a mapping.  Let $F$ be continuous at $x\in U$  and let $A\in \overline{\nabla} F(x)$. Then
  \[ (y^*,A^Ty^*)\in\gph D^*F(x)\ \forall y^*\in\R^m.\]
\end{proposition}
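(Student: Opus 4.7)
The plan is to unfold the definition of $A\in\overline\nabla F(x)$, use the classical fact that the Fréchet coderivative of a Fréchet-differentiable map is fully determined by its Jacobian, and then pass to the limit using the outer-limit formula \eqref{EqLimCoderivLimsup} for the limiting coderivative.

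First I would pick a sequence $x_k\to x$ such that $F$ is Fréchet differentiable at each $x_k$ and $\nabla F(x_k)\to A$, which exists by the very definition of $\overline\nabla F(x)$. At every such $x_k$, differentiability of $F$ yields $T_{\gph F}(x_k,F(x_k))=\gph \nabla F(x_k)$, and taking polars gives
\[\widehat N_{\gph F}(x_k,F(x_k))=\{(\nabla F(x_k)^Ty^*,-y^*)\mv y^*\in\R^m\}.\]
Translating this via Definition~\ref{DefGenDeriv}(iii) yields $\gph \widehat D^*F(x_k)=\{(y^*,\nabla F(x_k)^Ty^*)\mv y^*\in\R^m\}$. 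Since $\widehat N_{\gph F}\subseteq N_{\gph F}$, we conclude that for every $y^*\in\R^m$,
\[(y^*,\nabla F(x_k)^Ty^*)\in\gph \widehat D^*F(x_k)\subseteq\gph D^*F(x_k).\]

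Next I would use the continuity of $F$ at $x$: it guarantees $F(x_k)\to F(x)$, so the pairs $(x_k,F(x_k))$ converge to $(x,F(x))$ within $\gph F$. On the coderivative side, $\nabla F(x_k)^Ty^*\to A^Ty^*$ by continuity of the transpose, hence the pair $(y^*,\nabla F(x_k)^Ty^*)$ converges to $(y^*,A^Ty^*)$.

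Finally, invoking the Painlevé--Kuratowski outer-limit characterization of the limiting coderivative from \eqref{EqLimCoderivLimsup},
\[\gph D^*F(x)=\Limsup_{(x',y')\longsetto{\gph F}(x,F(x))}\gph D^*F(x',y'),\]
the pair $(y^*,A^Ty^*)$ lies in this outer limit, giving $(y^*,A^Ty^*)\in\gph D^*F(x)$. The only delicate point is the use of continuity of $F$ at $x$: without it, the graph convergence $(x_k,F(x_k))\to(x,F(x))$ (which is essential for applying \eqref{EqLimCoderivLimsup}) would not follow automatically from $x_k\to x$, and this is precisely why continuity is included among the hypotheses.
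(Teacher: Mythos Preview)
Your proof is correct and follows essentially the same line as the paper's: pick a differentiability sequence, identify the regular coderivative via the tangent subspace $\gph\nabla F(x_k)$, and pass to the limit. The only cosmetic difference is that you route through $\gph \widehat D^*F(x_k)\subseteq\gph D^*F(x_k)$ and then invoke \eqref{EqLimCoderivLimsup}, whereas the paper passes to the limit directly from $\widehat D^*F(x_k)$ using the definition of the limiting coderivative; your added remark on why continuity of $F$ is needed is a nice clarification the paper leaves implicit.
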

\begin{proof}
  Consider $A\in \overline{\nabla} F(x)$ together with some sequence $x_k\to x$ such that $\nabla F(x_k)\to A$ as $k\to\infty$. By \cite[Example 9.25(b)]{RoWe98} we have $T_{\gph F}(x_k,F(x_k))=\{(u,\nabla F(x_k)u)\mv u\in\R^n\}$ and therefore $(y^*,\nabla F(x_k)^Ty^*)\in\gph \widehat D^*F(x_k)$, $\forall y^*\in\R^m$. By passing to the limit, the assertion follows from the definition of the limiting coderivative.
\end{proof}
If the mapping $F:U\to\R^m$ is Lipschitz continuous, by Rademacher's Theorem $F$ is differentiable almost everywhere in $U$ and $\norm{\nabla F(x)}$  is bounded there by the Lipschitz constant of $F$. Thus $\overline{\nabla} F(\xb)\not=\emptyset$ for Lipschitz continuous mappings $F$.

Let $q:\R^n\to\bar\R$ be an extended-real-valued function with the domain and the epigraph
\[\dom q:=\{x\in\R^n\mv q(x)<\infty\},\quad \epi q:=\{(x,\alpha)\in\R^n\times\R\mv \alpha\geq q(x)\}.\]
The {\em(limiting/Mordukhovich)  subdifferential} of  $q$ at $\xb\in \dom q$ is defined geometrically by
\[\partial q(\xb):=\{x^*\in\R^n\mv (x^*,-1)\in N_{\epi q}(\xb,q(\xb))\}.\]
This subdifferential is a general extension of the classical gradient for smooth functions and of the classical
subdifferential of convex ones.

If $q(\xb)$ is finite, define the parametric family of second-order difference quotients for $q$ at $\xb$ for $\xba\in\R^n$ by
\[\Delta_t^2q(\xb,\xba)(w):=\frac{q(\xb+tw)-q(\xb)-t\skalp{\xba,w}}{\frac 12 t^2}\quad\mbox{ with $w\in\R^n$, $t>0$.}\]
The {\em second-order subderivative} of $q$ at $\xb$ for $\xba$ is given by
\[{\rm d^2}q(\xb,\xba)(w)=\liminf_{\AT{t\downarrow 0}{w'\to w}}\Delta_t^2q(\xb,\xba)(w').\]
$q$ is called {\em twice epi-differentiable} at $\xb$ for $\xba$, if the functions $\Delta_t^2q(\xb,\xba)$ epi-converge to ${\rm d^2}q(\xb,\xba)$ as $t\downarrow 0$.

Let us now recall the following regularity notions.
\begin{definition}
  Let $F:\R^n\tto\R^m$ be a mapping and let $(\xb,\yb)\in\gph F$.
  \begin{enumerate}
    \item $F$ is said to be {\em metrically subregular at} $(\xb,\yb)$ if there exists $\kappa\geq 0$  along with some  neighborhood $X$ of $\xb$ such that
    \begin{equation}\label{EqSubreg}
      \dist{x,F^{-1}(\yb)}\leq \kappa\dist{\yb,F(x)}\ \forall x\in X.
    \end{equation}
    The infimum over all $\kappa\geq 0$ such that \eqref{EqSubreg} holds for some neighborhood $X$ is denoted by $\subreg F(\xb,\yb)$.
  \item $F$ is said to be {\em strongly metrically subregular at} $(\xb,\yb)$ if it is metrically subregular at $(\xb,\yb)$ and there exists a neighborhood $X'$ of $\xb$ such that $F^{-1}(\yb)\cap X'=\{\xb\}$.
  \item $F$ is said to be {\em metrically regular around} $(\xb,\yb)$ if there is $\kappa\geq 0$ together with neighborhoods $X$ of $\xb$ and $Y$ of $\yb$ such that
      \begin{equation}\label{EqMetrReg}
      \dist{x,F^{-1}(y)}\leq \kappa\dist{y,F(x)}\ \forall (x,y)\in X\times Y.
    \end{equation}
    The infimum over all $\kappa\geq 0$ such that \eqref{EqMetrReg} holds for some neighborhoods $X,Y$ is denoted by $\reg F(\xb,\yb)$.
  \item $F$ is said to be {\em strongly metrically regular around} $(\xb,\yb)$ if it is metrically regular around $(\xb,\yb)$ and $F^{-1}$ has a single-valued localization around $(\yb,\yb)$, i.e., there are  open neighborhoods $Y'$ of $\yb$, $X'$ of $\xb$ and a mapping $h:Y'\to\R^n$ with $h(\yb)=\xb$ such that $\gph F\cap (X'\times Y')=\{(h(y),y)\mv y\in Y'\}$.
  \end{enumerate}
\end{definition}
It is well-known, see, e.g., \cite{DoRo14} that the property of (strong) metric subregularity for $F$ at $(\xb,\yb)$ is equivalent with the property of {\em (isolated) calmness}  for $F^{-1}$ at $(\yb,\xb)$.
Further, $F$ is metrically regular around $(\xb,\yb)$ if and only if the inverse mapping $F^{-1}$ has the so-called {\em Aubin property} around $(\yb,\xb)$. In this paper we will frequently use the following characterization of strong metric regularity.
\begin{theorem}[{ cf. \cite[Proposition 3G.1]{DoRo14}}]\label{ThStrMetrReg} $F:\R^n\tto\R^m$ is strongly metrically regular around $(\xb,\yb)$ if and only if $F^{-1}$ has a Lipschitz continuous localization $h$  around $(\yb,\xb)$. In this case there holds
\[\reg F(\xb,\yb)=\limsup_{\AT{y,y'\to\yb}{y\not=y'}}\frac {\norm{h(y)-h(y')}}{\norm{y-y'}}.\]
\end{theorem}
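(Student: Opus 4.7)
The plan is to establish the equivalence by extracting a single-valued localization on one side and verifying metric regularity on the other, then to identify the limsup expression with the standard Lipschitz modulus of a localization.

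For the forward implication, assume $F$ is strongly metrically regular around $(\xb,\yb)$. By the definition I already have a single-valued localization $h:Y'\to X'$ with $h(\yb)=\xb$ together with the metric regularity estimate $\dist{x,F^{-1}(y)}\leq\kappa\,\dist{y,F(x)}$ on some neighborhoods $X\times Y$ containing $X'\times Y'$. Shrinking $Y'$ so that $h(Y')$ is contained in the interior of $X'$, I would take $y,y'\in Y'$, set $x'=h(y')\in X'$ and observe that $y'\in F(x')$, hence
\[
\dist{x',F^{-1}(y)}\le\kappa\,\dist{y,F(x')}\le\kappa\,\norm{y-y'}.
\]
Further shrinking $Y'$ if necessary so that $\kappa\,\mathrm{diam}(Y')$ is smaller than the distance from $h(Y')$ to $\R^n\setminus X'$, I conclude that any point of $F^{-1}(y)$ within distance $\kappa\|y-y'\|$ of $x'$ must already lie in $X'$, and consequently equal $h(y)$ by the localization property. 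This yields $\|h(y)-h(y')\|\le\kappa\|y-y'\|$, so $h$ is Lipschitz on $Y'$.

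For the reverse implication, assume $F^{-1}$ admits a Lipschitz single-valued localization $h:Y'\to X'$ with Lipschitz constant $L$. Strong metric regularity will follow once I show metric regularity with modulus $L$ on suitable neighborhoods. I choose $\delta>0$ with $\B_\delta(\yb)\subseteq Y'$ and pick $X\subseteq X'$ and $Y\subseteq\B_{\delta/2}(\yb)$ such that for every $(x,y)\in X\times Y$ any $y''\in F(x)$ achieving (approximately) $\dist{y,F(x)}<\delta/2$ automatically satisfies $y''\in Y'$. Given such a near-minimizer $y''\in F(x)\cap Y'$, the localization forces $x=h(y'')$, and then
\[
\dist{x,F^{-1}(y)}\le\norm{h(y'')-h(y)}\le L\norm{y''-y}.
\]
Passing to the infimum over $y''$ delivers $\dist{x,F^{-1}(y)}\le L\,\dist{y,F(x)}$. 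The empty-image and large-distance cases are handled trivially. Together with the localization, this is strong metric regularity.

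For the formula, denote the right-hand limsup by $\mathrm{lip}\,h(\yb)$. The argument of the forward implication shows $\mathrm{lip}\,h(\yb)\le\reg F(\xb,\yb)$, because any admissible $\kappa$ eventually serves as a Lipschitz constant for $h$. Conversely, the argument of the reverse implication shows that any constant strictly larger than $\mathrm{lip}\,h(\yb)$ serves as a metric regularity modulus, by choosing the neighborhoods tight enough so that $L$ may be taken arbitrarily close to $\mathrm{lip}\,h(\yb)$. Taking infima on both sides yields the claimed equality.

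The delicate step, and the one I would treat with care, is the bookkeeping of neighborhoods in the reverse implication: I must guarantee that a near-minimizer $y''\in F(x)$ approximating $\dist{y,F(x)}$ lies inside the localization window $Y'$, since outside of it the equality $x=h(y'')$ may fail. This is the only subtle point; the rest is a direct manipulation of the defining inequalities.
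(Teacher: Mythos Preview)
The paper does not supply its own proof of this theorem; it is quoted verbatim from \cite[Proposition~3G.1]{DoRo14} and used as a black box. So there is no ``paper's proof'' to compare against---what you have written is, in outline, the standard argument that one finds in Dontchev--Rockafellar.

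Your argument is essentially correct, but the neighborhood bookkeeping in the forward implication deserves more care than you give it. You write that you shrink $Y'$ so that $h(Y')$ sits well inside $X'$ (and inside the metric-regularity neighborhood $X$). At this stage you do not yet know that $h$ is continuous, so it is not obvious that shrinking $Y'$ forces $h(Y')$ to shrink. The clean way is to first apply the metric-regularity inequality at the single point $(\xb,y)$: from $\dist{\xb,F^{-1}(y)}\le\kappa\,\dist{y,F(\xb)}\le\kappa\norm{y-\yb}$ you extract a point of $F^{-1}(y)$ inside $X'$, which must then equal $h(y)$; this gives $\norm{h(y)-\xb}\le\kappa\norm{y-\yb}$ and in particular the continuity of $h$ at $\yb$. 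Only after that does your shrinking step become legitimate, and the two-point estimate $\norm{h(y)-h(y')}\le\kappa\norm{y-y'}$ follows as you indicate.

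In the reverse implication your case split is right: when $\dist{y,F(x)}$ is small, a near-minimizer $y''$ lands in $Y'$ and the localization forces $x=h(y'')$; when $\dist{y,F(x)}$ is bounded below, the inequality is trivial because $h(y)\in F^{-1}(y)$ is already close to $x$. Your identification of the modulus with $\mathrm{lip}\,h(\yb)$ is also correct, provided you note that in the ``large-distance'' case the effective constant is $L+o(1)$ as the $x$-neighborhood shrinks, so the infimum over admissible moduli really does come down to $L$.
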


In this paper we will also use the  following point-based characterizations of the above regularity properties.
\begin{theorem}\label{ThCharRegByDer}
    Let $F:\R^n\tto\R^m$ be a mapping and let $(\xb,\yb)\in\gph F$.
    \begin{enumerate}
    \item[(i)] (Levy-Rockafellar criterion) $F$ is strongly metrically subregular at $(\xb,\yb)$ if and only if
      \begin{equation}\label{EqLevRoCrit}
        0\in DF(\xb,\yb)(u)\ \Rightarrow u=0.
      \end{equation}
      and in this case one has
      \[\subreg F(\xb,\yb)=\sup\{\norm{u}\mv (u,v)\in\gph DF(\xb,\yb),\ \norm{v}\leq 1\}.\]
    \item[(ii)] (Mordukhovich criterion) $F$ is metrically regular around $(\xb,\yb)$ if and only if
      \begin{equation}
            \label{EqMoCrit} 0\in D^*F(\xb,\yb)(y^*)\ \Rightarrow\ y^*=0.
      \end{equation}
      Further, in this case one has
      \begin{equation}
        \label{EqModMetrReg} {\rm reg\;}F(\xb,\yb)=\sup\{\norm{y^*}\mv (y^*,x^*)\in\gph D^*F(\xb,\yb),\ \norm{x^*}\leq 1\}.
      \end{equation}
    \item[(iii)] $F$ is strongly metrically regular around $(\xb,\yb)$ if and only if
      \begin{equation}
            \label{EqStrictCrit} 0\in D_*F(\xb,\yb)(u)\ \Rightarrow\ u=0
      \end{equation}
      and \eqref{EqMoCrit} holds. In this case one also has
      \begin{equation}\label{EqModStrMetrReg}
        {\rm reg\;}F(\xb,\yb) = \sup\{\norm{u}\mv (u,v)\in\gph D_*F(\xb,\yb),\ \norm{v}\leq 1\}.
      \end{equation}
    \end{enumerate}
\end{theorem}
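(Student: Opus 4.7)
The plan is to reduce each characterization to a corresponding property of the inverse mapping $F^{-1}$, using that (strong) metric subregularity and (strong) metric regularity of $F$ at/around $(\xb,\yb)$ are equivalent to (isolated) calmness and the (Lipschitz-localized) Aubin property of $F^{-1}$ at/around $(\yb,\xb)$, respectively, as noted in the paragraph preceding the statement. The required bookkeeping is provided by the inversion identities $\gph DF^{-1}(\yb,\xb)=\{(v,u)\mv (u,v)\in\gph DF(\xb,\yb)\}$, $\gph D^*F^{-1}(\yb,\xb)=\{(x^*,y^*)\mv (y^*,x^*)\in\gph D^*F(\xb,\yb)\}$, and the analogous one for $D_*F$; all follow at once from Definition~\ref{DefGenDeriv}.

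For (i), I would invoke the Levy--Rockafellar characterization of isolated calmness of $F^{-1}$ at $(\yb,\xb)$: it holds precisely when $DF^{-1}(\yb,\xb)(0)=\{0\}$, and the calmness modulus equals $\sup\{\|u\|\mv u\in DF^{-1}(\yb,\xb)(v),\ \|v\|\le 1\}$. Rewriting via the inversion identity yields \eqref{EqLevRoCrit} and the formula for $\subreg F(\xb,\yb)$. For (ii), I would invoke the classical Mordukhovich criterion for the Aubin property of $F^{-1}$ (cf.\ \cite{RoWe98}) together with the coderivative expression for its modulus; the coderivative inversion identity then delivers \eqref{EqMoCrit} and \eqref{EqModMetrReg}.

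The substance of the theorem is in (iii). For the ``only if'' direction, strong metric regularity produces by Theorem~\ref{ThStrMetrReg} a Lipschitz localization $h$ of $F^{-1}$ around $(\yb,\xb)$ with Lipschitz modulus equal to $\reg F(\xb,\yb)$, so that locally $\gph F=\{(h(y),y)\}$. Every element of $T^P_{\gph F}(\xb,\yb)=\gph D_*F(\xb,\yb)$ is thus a limit of vectors $((h(y_k')-h(y_k))/t_k,\, (y_k'-y_k)/t_k)$, and the Lipschitz property forces $\|u\|\le \reg F(\xb,\yb)\|v\|$ for any such $(u,v)$. This immediately implies \eqref{EqStrictCrit} and the ``$\le$'' inequality in \eqref{EqModStrMetrReg}; the reverse inequality is obtained by choosing near-extremal sequences in the modulus formula of Theorem~\ref{ThStrMetrReg}.

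For the ``if'' direction of (iii), the Mordukhovich criterion \eqref{EqMoCrit} already supplies metric regularity of $F$, so $F^{-1}$ enjoys the Aubin property; it remains to upgrade any localization of $F^{-1}$ to a single-valued one. I would argue by contradiction: if no such single-valued localization exists, there are sequences $(x_k,y_k),(x_k',y_k)\in\gph F$ with both tending to $(\xb,\yb)$ and $x_k\neq x_k'$. Setting $t_k:=\|x_k-x_k'\|\downarrow 0$, the unit vectors $((x_k'-x_k)/t_k,\,0)$ admit a subsequential limit $(u,0)$ with $\|u\|=1$, which by construction lies in $T^P_{\gph F}(\xb,\yb)$, i.e., $0\in D_*F(\xb,\yb)(u)$ with $u\neq 0$, contradicting \eqref{EqStrictCrit}. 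The main technical obstacle here is producing such competing sequences: the failure of single-valuedness must be converted into an actual non-uniqueness of preimages of a common $y_k$ in a shrinking neighborhood of $\xb$, for which the Aubin property is essential (it guarantees that both preimages can be selected arbitrarily close to $\xb$), and one must then verify that the constructed sequences meet the definition of the paratingent cone.
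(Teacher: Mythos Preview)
Your argument is correct, but it differs substantially from what the paper does. The paper's proof is entirely by citation: (i) is referred to \cite[Theorem~4E.1]{DoRo14}, (ii) to \cite[Theorem~3.3]{Mo18}, and (iii) to the Dontchev--Frankowska criterion \cite[Theorem~16.2]{DoFra13}, which characterizes strong metric regularity via \eqref{EqStrictCrit} together with the inner-semicontinuity condition $\xb\in\liminf_{y\to\yb}F^{-1}(y)$; the paper then simply observes that this latter condition is secured by metric regularity, hence by the Mordukhovich criterion \eqref{EqMoCrit}. No modulus formula is argued separately.

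What you do instead is reprove the content of these cited results directly, and your treatment of (iii) is essentially an independent proof of (a slight variant of) the Dontchev--Frankowska theorem. The ``only if'' direction via the Lipschitz localization $h$ from Theorem~\ref{ThStrMetrReg} is clean and also delivers \eqref{EqModStrMetrReg}, which the paper leaves implicit in the citation. For the ``if'' direction your contradiction argument with the paratingent cone is correct; one small clarification: the role of the Aubin property is not really to ``select both preimages close to $\xb$'' but to rule out the \emph{other} way single-valued localizations can fail, namely emptiness of $F^{-1}(y)\cap X'$ for $y$ near $\yb$. Once emptiness is excluded, the failure must come from multiplicity, which hands you the competing sequences $(x_k,y_k),(x_k',y_k)$ directly. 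Your route buys self-containment and an explicit derivation of the modulus identity; the paper's buys brevity.
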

\begin{proof}
  Statement (i)  follows from \cite[Theorem 4E.1]{DoRo14}. Statement (ii) can be found in \cite[Theorem 3.3]{Mo18}. The criterion for strong metric regularity follows from Dontchev and Frankowska \cite[Theorem 16.2]{DoFra13} by taking into account that the condition $\xb\in \liminf_{y\to\yb}F^{-1}(y)$  appearing in \cite[Theorem 16.2]{DoFra13} can be ensured by the requirement that $F$ is metrically regular which in turn can be characterized by the Mordukhovich criterion.
\end{proof}
For a sufficient condition for metric subregularity based on directional limiting coderivatives we refer to \cite{GO3}.

The properties of (strong) metric regularity and strong metric subregularity are stable under Lipschitzian and calm  perturbations, respectively, cf. \cite{DoRo14}. Further note that the property of (strong) metric regularity holds around all points belonging to the graph of $F$ sufficiently close to the reference point, whereas the property of (strong) metric subregularity
is guaranteed to hold only at the reference point. This leads to the following definition.
\begin{definition}\label{DefLocStrSubReg}
  We say that the mapping $F:\R^n\tto\R^m$ is {\em (strongly) metrically subregular around} $(\xb,\yb)\in\gph F$ if there is a neighborhood $W$ of $(\xb,\yb)$ such that $F$ is (strongly) metrically subregular at every point $(x,y)\in\gph F\cap W$ and we define
  \[\lsubreg F(\xb,\yb):=\limsup_{(x,y)\longsetto{{\gph F}}(\xb,\yb)}\subreg F(x,y)<\infty.\]
  In this case we will also speak about {\em (strong) metric subregularity on a neighborhood}.
\end{definition}
Note that every polyhedral multifunction, i.e., a mapping whose graph is the union of finitely many convex polyhedral sets, is metrically subregular around every point of its graph by Robinson's result \cite{Ro81}. In Section \ref{SecStrMetrSubr}, characterizations of strong metric subregularity on a neighborhood will be investigated.

Next we  introduce the \ssstar sets and mappings.

\begin{definition}\label{DefSemiSmooth}(cf. \cite{GfrOut21}.)
\begin{enumerate}
\item  A set $A\subseteq\R^s$ is called {\em \ssstar} at a point $\xb\in A$ if for all $u\in
    \R^s$ it holds
  \begin{equation}\label{EqSemiSmoothSet}\skalp{x^*,u}=0\quad \forall x^*\in N_A(\xb;u).
  \end{equation}
\item
A set-valued mapping $F:\R^n\tto\R^m$ is called {\em \ssstar} at a point $(\xb,\yb)\in\gph F$, if
$\gph F$ is \ssstar at $(\xb,\yb)$, i.e., for all $(u,v)\in\R^n\times\R^m$ we have
\begin{equation}\label{EqSemiSmooth}
\skalp{u^*,u}=\skalp{v^*,v}\quad \forall (v^*,u^*)\in\gph D^*F((\xb,\yb);(u,v)).
\end{equation}
\end{enumerate}
\end{definition}
The class of semismooth* mappings is rather broad.
We list here two important classes of multifunctions having this property.
\begin{proposition}
  \label{PropSSstar}
  \begin{enumerate}
    \item[(i)]Every mapping whose graph is the union of finitely many closed convex sets is \ssstar at every point of its graph.
    \item[(ii)]Every mapping with closed subanalytic graph is \ssstar at every point of its graph.
  \end{enumerate}
\end{proposition}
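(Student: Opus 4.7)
My plan is to prove both statements at the level of sets, since by Definition~\ref{DefSemiSmooth}(ii) the mapping version is just the set property applied to $\gph F$. Fix $\xb\in A$, $u\in\R^s$, and $x^*\in N_A(\xb;u)$, and pick sequences $t_k\downarrow 0$, $d_k\to u$, $x_k^*\to x^*$ with $x_k^*\in\widehat{N}_A(\xb+t_kd_k)$; the goal in both parts is $\skalp{x^*,u}=0$.

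For (i), write $A=\bigcup_{i=1}^N A_i$ with each $A_i$ closed and convex. Passing to a subsequence, I may assume that all $\xb+t_kd_k$ lie in a single $A_i$; closedness then yields $\xb\in A_i$, while $(\xb+t_kd_k-\xb)/t_k=d_k\to u$ forces $u\in T_{A_i}(\xb)$. The inclusion $A_i\subseteq A$ implies $\widehat{N}_A\subseteq\widehat{N}_{A_i}=N_{A_i}$ (the last equality by convexity), so each $x_k^*\in N_{A_i}(\xb+t_kd_k)$ and outer semicontinuity of the convex normal cone gives $x^*\in N_{A_i}(\xb)$. Two applications of the convex normal inequality then finish the job: testing the feasible point $\xb\in A_i$ at the base point $\xb+t_kd_k$ yields $\skalp{x_k^*,d_k}\ge 0$ and hence $\skalp{x^*,u}\ge 0$, while $x^*\in N_{A_i}(\xb)$ combined with $u\in T_{A_i}(\xb)$ yields $\skalp{x^*,u}\le 0$.

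For (ii), I would invoke a Whitney (b)-regular $C^1$ stratification $A=\bigsqcup_\alpha S_\alpha$ of the closed subanalytic set $A$, whose existence is classical in the subanalytic category. After extraction I may assume that the points $\xb+t_kd_k$ all lie in a single stratum $S_\alpha$, that $\xb$ itself lies in some stratum $S_\beta\subseteq\cl S_\alpha$, and that the tangent spaces $T_{S_\alpha}(\xb+t_kd_k)$ converge in the Grassmannian to some subspace $T$. Smoothness of $S_\alpha$ gives $\widehat{N}_A(\xb+t_kd_k)\subseteq\widehat{N}_{S_\alpha}(\xb+t_kd_k)=T_{S_\alpha}(\xb+t_kd_k)^\perp$, so $x^*\in T^\perp$. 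The case $u=0$ is immediate; for $u\ne 0$, Whitney's condition (b) applied to the pair $(x_k,y_k):=(\xb+t_kd_k,\xb)$ shows that the secant direction $d_k/\norm{d_k}$, with limit $u/\norm{u}$, lies in $T$, so $u\in T$ and $\skalp{x^*,u}=0$.

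The main technical hurdle is entirely in part (ii): arranging the simultaneous subsequential convergence of the tangent spaces and of the secant directions so that Whitney (b)-regularity can be applied. Part (i) is by comparison routine, the key observation being that already a single $A_i$ captures infinitely many members of the sequence and that convexity supplies two-sided quantitative control.
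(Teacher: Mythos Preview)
Your argument for both parts is correct. The paper itself does not give a self-contained proof: it simply cites \cite[Propositions~3.4 and~3.5]{GfrOut21} for part~(i) and, for part~(ii), observes that the \ssstar property coincides with the semismoothness of sets in the sense of \cite{HO01} and then invokes \cite[Theorem~2]{Jou07}. You have instead supplied the underlying mathematics directly.

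Your treatment of~(i) is exactly the standard one and matches what the cited reference does. Your treatment of~(ii) via a Whitney~(b)-regular $C^1$ stratification is essentially the mechanism behind Jourani's theorem, so you are unpacking the cited result rather than taking a genuinely different route. Two small points worth making explicit in a polished write-up: the stratification of a closed subanalytic set is only \emph{locally} finite, so the pigeonhole extraction that places all $\xb+t_kd_k$ in a single stratum relies on local finiteness near $\xb$; and the case $S_\alpha=S_\beta$ (i.e., $\xb$ already in the same stratum as the sequence) does not require Whitney~(b), since then $T=T_{S_\alpha}(\xb)$ by $C^1$-continuity of tangent spaces and $u\in T_{S_\alpha}(\xb)$ directly. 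Neither of these is a gap in your reasoning, only a matter of presentation.
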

\begin{proof}
The first assertion was already shown in \cite[Proposition 3.4, 3.5]{GfrOut21}. As mentioned in \cite[Remark 3.10]{GfrOut21}, the \ssstar property of sets amounts to the notion of semismoothness introduced in \cite{HO01}. It follows thus from \cite[Theorem 2]{Jou07},  that all closed subanalytic sets  are automatically \ssstar and the second statement holds by the definition of \ssstar mappings.
\end{proof}
The statement of Proposition \ref{PropSSstar}(ii) can be considered as the counterpart to \cite{BoDaLe09}, where it is shown that locally Lipschitz tame mappings $F:U\subseteq\R^n\to\R^m$ are semismooth in the sense of Qi and Sun \cite{QiSun93}.
In case of single-valued Lipschitzian mappings the \ssstar property is equivalent with the semismooth property introduced by Gowda \cite{Gow04}, which is weaker than the one in \cite{QiSun93}.

In the above definition the   \ssstar sets and mappings have been defined via directional limiting normal
cones and coderivatives. For our purpose it is convenient to make use of equivalent
characterizations in terms of standard (regular and limiting) normal cones and coderivatives,
respectively.

\begin{proposition}[{cf.\cite[Corollary 3.3]{GfrOut21}}]\label{PropCharSemiSmooth}Let $F:\R^n\tto\R^m$ and $(\xb,\yb)\in \gph F$ be given.
Then the following three statements are equivalent
\begin{enumerate}
\item[(i)] $F$ is \ssstar at $(\xb,\yb)$.
\item[(ii)] For every $\epsilon>0$ there is some $\delta>0$ such that
\begin{equation}\label{EqCharSemiSmoothReg}
\hspace{-1cm}\vert \skalp{x^*,x-\xb}-\skalp{y^*,y-\yb}\vert\leq \epsilon
\norm{(x,y)-(\xb,\yb)}\norm{(x^*,y^*)}\ \forall(x,y)\in \B_\delta(\xb,\yb)\ \forall
(y^*,x^*)\in\gph \widehat D^*F(x,y).\end{equation}
\item[(iii)] For every $\epsilon>0$ there is some $\delta>0$ such that
  \begin{equation}\label{EqCharSemiSmoothLim}
\hspace{-1cm}\vert \skalp{x^*,x-\xb}-\skalp{y^*,y-\yb}\vert\leq \epsilon
\norm{(x,y)-(\xb,\yb)}\norm{(x^*,y^*)}\ \forall(x,y)\in \B_\delta(\xb,\yb)\ \forall
(y^*,x^*)\in\gph D^*F(x,y).
\end{equation}
\end{enumerate}
\end{proposition}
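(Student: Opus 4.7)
My plan is to establish the equivalences in a cycle, (iii) $\Rightarrow$ (ii) $\Rightarrow$ (i) $\Rightarrow$ (iii). The first implication is immediate from the inclusion $\widehat N_{\gph F}(x,y)\subseteq N_{\gph F}(x,y)$, which gives $\gph \widehat D^*F(x,y)\subseteq\gph D^*F(x,y)$, so (iii) formally strengthens (ii).

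For (ii) $\Rightarrow$ (i), I would fix $(u,v)\in\R^n\times\R^m$ and $(v^*,u^*)\in\gph D^*F((\xb,\yb);(u,v))$, and unfold the definition of the directional limiting normal cone: there are sequences $t_k\downarrow 0$, $(u_k,v_k)\to(u,v)$, and $(u_k^*,-v_k^*)\in\widehat N_{\gph F}((\xb,\yb)+t_k(u_k,v_k))$ converging to $(u^*,-v^*)$. Setting $(x_k,y_k):=(\xb,\yb)+t_k(u_k,v_k)$, the pairs $(v_k^*,u_k^*)\in\gph\widehat D^*F(x_k,y_k)$ fall into the scope of (ii). Substituting into the inequality, the left-hand side factors as $t_k|\skalp{u_k^*,u_k}-\skalp{v_k^*,v_k}|$ and the right-hand side as $\epsilon\, t_k\norm{(u_k,v_k)}\norm{(u_k^*,v_k^*)}$. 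Dividing by $t_k$, passing to the limit, and letting $\epsilon\downarrow 0$ yields $\skalp{u^*,u}=\skalp{v^*,v}$, which is \eqref{EqSemiSmooth}.

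For the harder direction (i) $\Rightarrow$ (iii), I would argue by contradiction. Negating (iii) produces $\epsilon_0>0$, sequences $(x_k,y_k)\to(\xb,\yb)$ in $\gph F$ with $(x_k,y_k)\neq(\xb,\yb)$, and $(y_k^*,x_k^*)\in\gph D^*F(x_k,y_k)$ normalized by $\norm{(x_k^*,y_k^*)}=1$, for which the inequality of (iii) fails with slack $\epsilon_0$. Since $(x_k^*,-y_k^*)\in N_{\gph F}(x_k,y_k)$ is itself a limit of regular normals at points tending to $(x_k,y_k)$, a standard diagonal extraction produces new quadruples with $(\tilde x_k,\tilde y_k)\to(\xb,\yb)$ in $\gph F$, $(\tilde x_k^*,-\tilde y_k^*)\in\widehat N_{\gph F}(\tilde x_k,\tilde y_k)$, $\norm{(\tilde x_k^*,\tilde y_k^*)}\to 1$, while the quantitative violation survives with, say, slack $\epsilon_0/2$. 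Setting $t_k:=\norm{(\tilde x_k,\tilde y_k)-(\xb,\yb)}$ and $(u_k,v_k):=t_k^{-1}((\tilde x_k,\tilde y_k)-(\xb,\yb))$, passing to subsequences gives $(u_k,v_k)\to(u,v)$ with $\norm{(u,v)}=1$ and $(\tilde x_k^*,\tilde y_k^*)\to(x^*,y^*)$, whence $(y^*,x^*)\in\gph D^*F((\xb,\yb);(u,v))$. Property (i) then forces $\skalp{x^*,u}=\skalp{y^*,v}$, but dividing the surviving inequality by $t_k$ and passing to the limit gives $|\skalp{x^*,u}-\skalp{y^*,v}|\geq\epsilon_0/2$, a contradiction.

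The main obstacle is the diagonal extraction in (i) $\Rightarrow$ (iii): one must simultaneously replace each limiting coderivative element by an approximating regular one at nearby base points, preserve the normalization, and preserve the quantitative violation of the inequality \textbf{uniformly enough} that, after rescaling by $t_k$, both the primal directions and the dual vectors converge to a single pair entering the directional limiting coderivative. Once this diagonal argument is set up, the contradiction with the semismooth$^*$ orthogonality condition \eqref{EqSemiSmooth} is automatic.
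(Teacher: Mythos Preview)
Your proof is correct. The paper itself does not provide a proof of this proposition: it is stated as a citation of \cite[Corollary 3.3]{GfrOut21} and left without argument, so there is no ``paper's own proof'' to compare against. Your cycle (iii)\,$\Rightarrow$\,(ii)\,$\Rightarrow$\,(i)\,$\Rightarrow$\,(iii) is exactly the natural route, and each step is sound.

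A couple of minor comments on execution. In (ii)\,$\Rightarrow$\,(i) everything works as you describe; just note that the case $(u,v)=0$ is trivial since then $N_{\gph F}((\xb,\yb);(0,0))=N_{\gph F}(\xb,\yb)$ and the right-hand side of \eqref{EqSemiSmooth} vanishes automatically---your argument via sequences $t_k(u_k,v_k)$ still covers it, but it is worth observing that no division-by-zero issue arises.

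In (i)\,$\Rightarrow$\,(iii), the diagonal extraction you flag as the ``main obstacle'' is entirely routine once one notes that the map
\[
(x,y,x^*,y^*)\ \longmapsto\ \vert\skalp{x^*,x-\xb}-\skalp{y^*,y-\yb}\vert-\tfrac{\epsilon_0}{2}\norm{(x,y)-(\xb,\yb)}\norm{(x^*,y^*)}
\]
is continuous and strictly positive at each $(x_k,y_k,x_k^*,y_k^*)$; hence for every $k$ there is an approximating quadruple with regular normal at a nearby base point where the same strict inequality persists. Choosing the approximation also within distance $\min(1/k,\tfrac12\norm{(x_k,y_k)-(\xb,\yb)})$ of $(x_k,y_k)$ guarantees $(\tilde x_k,\tilde y_k)\neq(\xb,\yb)$ and $\tilde t_k\to 0$, so the rescaled directions are well defined and the passage to the directional limiting coderivative goes through exactly as you describe.
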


\section{\SCD mappings}

In what follows we denote by $\Z_n$ the metric space of all $n$-dimensional subspaces of $\R^{2n}$ equipped with the metric
\[d_\Z(L_1,L_2):=\norm{P_1-P_2}\]
where $P_i$ is the symmetric $2n\times 2n$ matrix representing the orthogonal projection on $L_i$, $i=1,2$.

Sometimes we will also work with bases for the subspaces $L\in\Z_n$. Let $\M_n$ denote the collection of all $2n\times n$ matrices with full  rank $n$ and for $L\in \Z_n$ we define
\[\M(L):=\{Z\in \M_n\mv \rge Z =L\},\]
i.e., the columns of $Z\in\M(L)$ are a basis for $L$. Further we denote by $\M^{\rm orth}(L)$ the set of all matrices $Z\in \M(L)$ with $Z^TZ=I$, i.e., the columns of $Z$ are an orthogonal basis for $L$. Recall that, given any matrix $\bar Z\in \M(L)$ ( or $\bar Z\in \M^{\rm orth}(L)$), there holds
\[\M(L)=\{\bar ZB\mv B\mbox{ nonsingular $n\times n$ matrix}\}\quad (\M^{\rm orth}(L)=\{\bar ZB\mv B\mbox{ orthogonal $n\times n$ matrix}\}).\]
Further recall that the  $2n\times 2n$ matrix $P$, representing the orthogonal projection on some $L\in\Z_n$, admits the representations
\begin{equation}\label{EqProjRepr}P=Z(Z^TZ)^{-1}Z^T,\ Z\in \M(L)\quad\mbox{and}\quad P=ZZ^T,\ Z\in \M^{\rm orth}(L).\end{equation}
\begin{lemma}\label{LemCompact}
  \begin{enumerate}
    \item[(i)]Let $Z_k\in\M_n$ be a sequence converging to some $Z\in \M_n$. Then $\rge Z_k$ converges in $\Z_n$ to $\rge Z\in\Z_n$.
    \item[(ii)]Let $L_k\in\Z_n$ be a sequence converging to $L\in\Z_n$. Then there is a sequence $Z_k \in\M(L_k)$ converging to some $Z\in \M(L)$.
    \item[(iii)]Let $A_k$ be a sequence of nonsingular $2n\times 2n$ matrices converging to a nonsingular matrix $A$ and let $L_k\in\Z_n$ be a sequence converging to $L\in\Z_n$. Then
    $\lim_{k\to\infty}d_\Z(A_kL_k,AL)=0$.
    \item[(iv)]The metric space $\Z_n$ is compact.
    \item[(v)]Let $L_k\in\Z_n$ be a sequence and let $L\in\Z_n$. Then $L_k$ converges to $L$ in $\Z_n$, i.e., $\lim_{k\to\infty}d_\Z(L_k,L)=0$, if and only if $\lim_{k\to\infty}L_k=L$ in the sense of Painlev\'e-Kuratowski convergence.
  \end{enumerate}
\end{lemma}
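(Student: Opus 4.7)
The plan is to treat the five items in order, relying on the projector representation $P = Z(Z^TZ)^{-1}Z^T$ from \eqref{EqProjRepr} together with the fact that, by construction, $L\mapsto P_L$ is an isometric embedding of $\Z_n$ into the space of symmetric $2n\times 2n$ matrices equipped with the operator norm.

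Items (i)--(iii) are continuity-type statements. For (i), write $P_k = Z_k(Z_k^TZ_k)^{-1}Z_k^T$ and use that matrix inversion is continuous at the invertible matrix $Z^TZ$ to conclude $P_k\to P$, i.e.\ $d_\Z(\rge Z_k,\rge Z)\to 0$. For (ii), I would pick any $Z\in \M^{\rm orth}(L)$ and set $Z_k:=P_kZ$, where $P_k$ is the projector onto $L_k$. The columns of $Z_k$ lie in $L_k$ by construction, and $Z_k\to PZ=Z$; since $Z$ has rank $n$, so does $Z_k$ for $k$ large, and then a dimension count forces $\rge Z_k=L_k$, so $Z_k\in \M(L_k)$ eventually (for the finitely many small $k$ where the rank could fail, pick any element of $\M(L_k)$). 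For (iii) I combine (ii) and (i): take $Z_k\to Z$ with $Z_k\in \M(L_k)$ and $Z\in \M(L)$, note $A_kZ_k\to AZ$, and since $A$ is nonsingular $AZ$ has full rank; applying (i) to the pair $(A_kZ_k,AZ)$ yields $A_kL_k\to AL$ in $\Z_n$.

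For (iv) I identify $\Z_n$ with its image under $L\mapsto P_L$. This image is exactly $\{P\in\R^{2n\times 2n} : P^T=P,\ P^2=P,\ \operatorname{tr} P=n\}$, which is closed (cut out by a continuous system of equations) and bounded (orthogonal projectors have operator norm at most $1$), hence compact; via the isometry, $\Z_n$ inherits compactness.

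The equivalence (v) is the main substantive step and the one I expect to require the most care. The forward implication is bookkeeping: if $P_k\to P$, then any $u\in L$ equals $\lim P_ku$ with $P_ku\in L_k$, giving $L\subseteq \Liminf L_k$; and for any $u_{k_j}\in L_{k_j}$ with $u_{k_j}\to u$, from $u_{k_j}=P_{k_j}u_{k_j}$ and $\|P_{k_j}u_{k_j}-Pu_{k_j}\|\le \|P_{k_j}-P\|\,\|u_{k_j}\|\to 0$ one gets $u=Pu\in L$, so $\Limsup L_k\subseteq L$. The reverse implication is the delicate point: Painlev\'e--Kuratowski convergence of subspaces does not, on its face, yield operator-norm convergence of the associated projectors. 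I would close this gap by the standard subsequence principle enabled by (iv): from any subsequence of $(L_k)$ extract a further subsequence that $d_\Z$-converges to some $L'$; by the forward direction this subsequence also Painlev\'e--Kuratowski converges to $L'$, so by uniqueness of Painlev\'e--Kuratowski limits $L'=L$. Since every subsequence of $(L_k)$ contains a $d_\Z$-sub-subsequence converging to $L$, the whole sequence $d_\Z$-converges to $L$, which finishes (v).
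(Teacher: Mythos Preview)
Your proof is correct. Items (i)--(iii) coincide with the paper's argument; in particular your construction in (ii), $Z_k:=P_kZ$, is exactly the paper's $\tilde Z_k:=Z_k(Z_k^TZ)$ once one notes $Z_kZ_k^T=P_k$ for $Z_k\in\M^{\rm orth}(L_k)$.

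The genuine differences are in (iv) and (v). For (iv) the paper picks orthonormal bases $Z_k\in\M^{\rm orth}(L_k)$, extracts a convergent subsequence, and observes the limit $Z$ still satisfies $Z^TZ=I$, whence $\rge Z\in\Z_n$ via (i). Your argument instead identifies $\Z_n$ isometrically with the closed bounded set $\{P:P^T=P,\ P^2=P,\ \mathrm{tr}\,P=n\}$. Both are short; yours is slightly more intrinsic. For (v) the paper appeals to \cite[Example 5.35, Theorems 5.43, 5.44]{RoWe98} to pass through graphical convergence of the (uniformly bounded, linear) projections, whereas you prove the forward implication by hand and close the reverse one by the subsequence principle enabled by compactness (iv). Your route is fully self-contained and avoids the external machinery on graphical convergence; the paper's route is terser but leans on those references.
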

\begin{proof}
  The first statement follows immediately from $Z(Z^TZ)^{-1}Z^T=\lim_{k\to\infty}Z_k(Z_k^TZ_k)^{-1}Z_k^T$ together with \eqref{EqProjRepr}. In order to prove (ii), choose $Z_k\in \M^{\rm orth}(L_k)$  and $Z\in \M^{\rm orth}(L)$. Then $ZZ^T=\lim_{k\to\infty}Z_kZ_k^T$ due to $L_k\longsetto{\Z_n}L$ and consequently $Z=ZZ^TZ=\lim_{k\to\infty}\tilde Z_k$ with $\tilde Z_k:=Z_k(Z_k^TZ)$. Hence, for sufficiently large $k$ we have $\tilde Z_k\in \M_n$ and $\tilde Z_k\in \M(L_k)$ follows. This proves (ii) and (iii) follows from (ii) and (i).
  In order to prove the compactness of $\Z_n$, consider a sequence $L_k\in\Z_n$ together with basis matrices $Z_k\in \M^{\rm orth}(L_k)$. By possibly passing to a subsequence we may assume that $Z_k$ converges to  some $Z$. Since $Z^TZ=\lim_{k\to\infty}Z_k^TZ_k=I$, we conclude $Z\in\M_n$ and $\rge Z=\lim_{k\to\infty}L_k\in\Z_n$ by (i). Hence the metric space $\Z_n$ is (sequentially) compact. Finally, by \cite[Example 5.35]{RoWe98} there holds $L_k\to L$ in the sense of Painlev\'e-Kuratowski convergence if and only if the projections $P_{L_k}$ on $L_k$ converge graphically to the projection $P_L$ on $L$. Since the projections on subspaces in $\Z_n$ are linear mappings with norm equal to $1$, graphical convergence of $P_{L_k}$ to $P_L$ is equivalent to uniform convergence $\lim_{k\to\infty}\norm{P_{L_k}-P_L}=0$ by \cite[Theorems 5.43, 5.44]{RoWe98}.
\end{proof}
We treat every element of $\R^{2n}$ as a column vector. In order to keep our notation simple we write $(u,v)$ instead of $\myvec{u\\ v}\in\R^{2n}$ when this does not lead to confusion. In order to refer to the components of the vector $z=\myvec{u\\ v}$ we set $\pi_1(z):=u,\ \pi_2(z):=v$.

Let $L\in\Z_n$ and consider $Z\in \M(L)$, which can be written in the form $Z=\myvec{A\\B}$. But we will rather write it as $Z=(A,B)$; thus  $\rge(A,B):=\{(Au,Bu)\mv u\in\R^n\}\doteq \Big\{\myvec{Au\\Bu}\mv u\in\R^n\Big\}=L$. Similarly as before, we will also use $\pi_1(Z):=A$, $\pi_2(Z):=B$ for referring to the two $n\times n$ parts of $Z$.

Further, for every $L\in \Z_n$ we define
\begin{align}\label{EqDualSubspace}
  L^*&:=\{(-v^*,u^*)\mv (u^*,v^*)\in L^\perp\},
\end{align}
where $L^\perp$ denotes as usual the orthogonal complement of $L$. Note that
\[(L^*)^\perp=\{(v,u)\mv \skalp{v,-v^*}+\skalp{u,u^*}=0\ \forall (u^*,v^*)\in L^\perp\}=\{(v,u)\mv (u,-v)\in (L^\perp)^\perp\}\]
and therefore
\begin{align*}
  (L^*)^*=\{(u,v)\mv (v,-u)\in (L^*)^\perp\}=\{(u,v)\mv (-u,-v)\in L\}=L.
\end{align*}
We denote by $S_n$ the $2n\times 2n$ orthogonal matrix
\[S_n:=\left(\begin{matrix}0&-I\\I&0\end{matrix}\right),\]
so that $L^*=S_nL^\perp$. If $P$ represents the orthogonal projection on $L$ then $I-P$ is the orthogonal projection on $L^\perp$ and $S_n(I-P)S_n^T$ is the orthogonal projection on $L^*$. Given two subspaces $L_1,L_2\in \Z_n$ with orthogonal projections $P_1,P_2$, we obtain
\[d_\Z(L_1^*,L_2^*)=\norm{S_n(I-P_1)S_n^T-S_n(I-P_2)S_n^T}=\norm{S_n(I-P_1-(I-P_2))S_n^T}=\norm{P_1-P_2}=d_\Z(L_1,L_2).\]
Thus the mapping $L\mapsto L^*$ defines an isometry on $\Z_n$ and a sequence $(L_k)$ converges in $\Z_n$ to some $L$ if and only if the sequence $(L_k^*)$ converges to $L^*$.

Consider the following relation between the graphical derivative and differentiability in case of single-valued mappings.
\begin{lemma}\label{LemDiffSingleValued}
  Consider $f:U\to\R^n$ with $U\subseteq \R^n$ open and a point $x\in U$. Then one has:
  \begin{enumerate}
    \item[(i)]If $f$ is Fr\'echet differentiable at $x$, then $DF(x)$ is a single-valued linear mapping, $DF(x)(u)=\nabla f(x)u$, $u\in\R^n$, and consequently $T_{\gph f}(x,f(x))=\rge(I,\nabla f(x))\in\Z_n$.
    \item[(ii)]Conversely, if $T_{\gph f}(x,f(x))\in\Z_n$ and $f$ is calm at $x$, i.e., there is some $\kappa\geq 0$ such that the estimate $\norm{f(x')-f(x)}\leq \kappa\norm{x'-x}$ holds for all $x'$ sufficiently close to $x$, then $f$ is Fr\'echet differentiable at $x$.
  \end{enumerate}
\end{lemma}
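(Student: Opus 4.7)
The plan is to handle the two parts separately, both working directly from the definition of the tangent cone.

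For part (i), I would start from the definition $T_{\gph f}(x,f(x)) = \Limsup_{t\downarrow 0}(\gph f - (x,f(x)))/t$. A pair $(u,v)$ lies in this outer limit iff there exist $t_k\downarrow 0$ and $u_k\to u$ with $(f(x+t_ku_k)-f(x))/t_k\to v$. Using Fréchet differentiability, $(f(x+t_ku_k)-f(x))/t_k = \nabla f(x)u_k + o(t_k\|u_k\|)/t_k$, and since $u_k$ is bounded this forces $v=\nabla f(x)u$. Conversely, for any $u$ the constant sequence $u_k\equiv u$ shows $(u,\nabla f(x)u)$ is tangent. Hence $T_{\gph f}(x,f(x))=\rge(I,\nabla f(x))$, which is $n$-dimensional and thus belongs to $\Z_n$, and $DF(x)$ coincides with the single-valued linear mapping $u\mapsto \nabla f(x)u$.

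For part (ii), let $L:=T_{\gph f}(x,f(x))\in\Z_n$ and let $\kappa$ be a calmness constant. My first step would be to exploit calmness to show that $\pi_1$ restricted to $L$ is injective: if $(0,v)\in L$ then there are $t_k\downarrow 0$, $u_k\to 0$ with $(f(x+t_ku_k)-f(x))/t_k\to v$, and calmness gives $\|(f(x+t_ku_k)-f(x))/t_k\|\le \kappa\|u_k\|\to 0$, so $v=0$. Since $\dim L=n$ and $\pi_1|_L$ is injective, $\pi_1(L)=\R^n$ and $L$ is the graph of a unique linear map $A:\R^n\to\R^n$, i.e., $L=\rge(I,A)$.

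The main remaining step — and the one that requires the most care — is to upgrade this ``tangential'' identification of $A$ to genuine Fréchet differentiability with $\nabla f(x)=A$. I would argue by contradiction: if the Fréchet remainder does not vanish, there exist $\epsilon>0$ and a sequence $x_k\to x$, $x_k\ne x$, with $\|f(x_k)-f(x)-A(x_k-x)\|>\epsilon\|x_k-x\|$. Setting $t_k:=\|x_k-x\|$ and $u_k:=(x_k-x)/t_k$, the unit vectors $u_k$ have a convergent subsequence $u_k\to u$ with $\|u\|=1$; calmness ensures $(f(x+t_ku_k)-f(x))/t_k$ is bounded, so along a further subsequence it converges to some $v$, and by definition of the tangent cone $(u,v)\in L=\rge(I,A)$, giving $v=Au$. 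Passing to the limit in $\|(f(x+t_ku_k)-f(x))/t_k - Au_k\|>\epsilon$ yields $\|v-Au\|\ge\epsilon$, the desired contradiction. Thus $f$ is Fréchet differentiable at $x$ with $\nabla f(x)=A$. The chief subtlety is the need to use calmness twice — once to force injectivity of $\pi_1|_L$, and once to extract a limit of the difference quotients in the contradiction argument — rather than merely invoking the tangent-cone inclusion.
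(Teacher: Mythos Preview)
Your proof is correct. The paper's own proof follows the same route for the key step in (ii) --- using calmness to show that $(0,v)\in L$ forces $v=0$, hence $L=\rge(I,A)$ for some matrix $A$ --- but then delegates both part (i) and the final step of (ii) (upgrading $Df(x)(u)=Au$ to genuine Fr\'echet differentiability) to \cite[Exercise 9.25]{RoWe98}. Your version is more self-contained: you spell out part (i) directly from the definition of the tangent cone, and your contradiction argument in (ii) --- passing to subsequences along unit directions and using calmness to extract a convergent difference quotient --- is exactly what that exercise amounts to. So the two proofs coincide on the structural idea; yours simply unpacks the cited reference.
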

\begin{proof}
  The statement (i) follows immediately from \cite[Exercise 9.25]{RoWe98}. In order to show (ii), we first prove that there is an  $n\times n$ matrix $A$ such that
  $T_{\gph f}(x,f(x))=\rge(I,A)$. Considering any $Z\in \M(T_{\gph f}(x,f(x)))$, we will show that $B:=\pi_1(Z)$ is nonsingular. Assuming on the contrary that $B$ is singular, there is some $p\not=0$ with $Bp=0$. Then $v:=\pi_2(Z)p\not=0$ because otherwise $Zp=0$ which is not possible. Hence $(0,v)=Zp\in T_{\gph f}$ and there exists sequences $t_k\downarrow 0$ and $(u_k,v_k)\to (u,v)$ such that $f(x)+t_kv_k=f(x+t_ku_k)$ $\forall k$ implying $t_k\norm{v_k}=\norm{f(x+t_ku_k)-f(x)}\leq \kappa t_k\norm{u_k}$ and $\norm{v}=\lim_{k\to\infty}\norm{u_k}=0$, a contradiction. Hence $B$ is nonsingular and we obtain $T_{\gph f}(x,f(x))=\rge (B,\pi_2(Z))=\rge (I,\pi_2(Z)B^{-1})$ proving our claim with $A=\pi_2(Z)B^{-1}$. Hence $Df(x)u=Au$, $u\in\R^n$ and the assertion follows once more from \cite[Exercise 9.25]{RoWe98}.
\end{proof}
Note that, when $f:U\to \R^n$, $U\subseteq\R^n$ open, is Fr\'echet differentiable at $u\in U$, then we even have $T_{\gph f}(u,f(u))={\rm Lim}_{t\downarrow 0}t^{-1}(\gph f-(u,f(u)))$.

We now introduce  new generalized derivatives for set-valued mappings. We confine ourselves to the particular case $F:\R^n\tto\R^n$ and, as we will see in the sequel, this restriction still permits a considerable number of applications.

\begin{definition}\label{DefSCDProperty}
 Consider a mapping $F:R^n\tto\R^n$.
  \begin{enumerate}
    \item We say that $F$ is {\em graphically  smooth of dimension $n$} at $(x,y)\in \gph F$, if $T_{\gph F}(x,y)=\gph DF(x,y)\in \Z_n$. Further we denote by $\OO_F$ the set of all points where $F$ is graphically smooth of dimension $n$.
    \item Associate with $F$ the four mappings $\widehat\Sp F$, $\widehat\Sp^* F$, $\Sp F$, $\Sp^* F$, all of which map $\gph F\tto \Z_n$ and are given by
    \begin{align*}\widehat\Sp F(x,y)&:=\begin{cases}\{\gph DF(x,y)\}& \mbox{if $(x,y)\in\OO_F$,}\\
    \emptyset&\mbox{else,}\end{cases}\\
    \widehat\Sp^* F(x,y)&:=\begin{cases}\{\gph DF(x,y)^*\}& \mbox{if $(x,y)\in\OO_F$,}\\
    \emptyset&\mbox{else,}\end{cases}\\
    \Sp F(x,y)&:=\Limsup_{(u,v)\longsetto{{\gph F}}(x,y)} \widehat\Sp F(u,v) \\
    &=\{L\in \Z_n\mv \exists (x_k,y_k)\longsetto{{\OO_F}}(x,y):\ \lim_{k\to\infty} d_\Z(L,\gph DF(x_k,y_k))=0\},\\
    \Sp^* F(x,y)&=\Limsup_{(u,v)\longsetto{{\gph F}}(x,y)} \widehat\Sp^* F(u,v)\\
    &=\{L\in \Z_n\mv \exists (x_k,y_k)\longsetto{{\OO_F}}(x,y):\ \lim_{k\to\infty} d_\Z(L,\gph DF(x_k,y_k)^*)=0\}.
    \end{align*}
    \item \begin{enumerate}
    \item We say that $F$ has the {\em\SCD} (subspace containing derivative) {\em property at} $(x,y)\in\gph F$, if $\Sp^*F(x,y)\not=\emptyset$.
     \item We say that $F$ has the \SCD property {\em around} $(x,y)\in\gph F$, if there is a neighborhood $W$ of $(x,y)$ such that $F$ has the \SCD property at every $(x',y')\in\gph F\cap W$.
     \item Finally, we call $F$ an {\em \SCD mapping} if
    $F$ has the \SCD property at every point of its graph.
    \end{enumerate}
  \end{enumerate}
\end{definition}
Apart from the collections $\Sp F(x,y)$ and $\Sp^*F(x,y)$ of subspaces we will sometimes use the unions
\begin{equation}
  \bigcup\Sp F(x,y):=\bigcup_{L\in \Sp F(x,y)}L, \quad \bigcup\Sp^* F(x,y):=\bigcup_{L\in \Sp^* F(x,y)}L.
\end{equation}
\begin{remark}\label{RemRegCoder}
  By definition of the regular coderivative there holds
  \[\gph \widehat D^*F(x,y) =\gph DF(x,y)^*,\ (x,y)\in\OO_F.\]
\end{remark}
\begin{remark}\label{RemSymmetry}
  Since $L\mapsto L^*$ is an isometry on $\Z_n$ and $(L^*)^*=L$, we have
  \[\Sp^* F(x,y)=\{L^*\mv L\in \Sp F(x,y)\},\quad \Sp F(x,y)=\{L^*\mv L\in \Sp^* F(x,y)\}.\]
  Hence, $F$ has the \SCD property at $(x,y)\in\gph F$ if and only if $\Sp F(x,y)\not=\emptyset$.
\end{remark}

Since we consider convergence in the compact metric space $\Z_n$, we obtain readily the following result.

\begin{lemma}\label{LemSCDproperty}
 A mapping $F:\R^n\tto\R^n$ has the \SCD property at $(x,y)\in\gph F$ if and only if $(x,y)\in\cl \OO_F$. Further, $F$ is an \SCD mapping if and only if $\cl \OO_F=\cl \gph F$, i.e., $F$ is graphically smooth of dimension $n$ at the points of a dense subset of its graph.
\end{lemma}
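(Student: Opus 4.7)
The plan is to reduce both assertions to the definitions by exploiting the compactness of $\Z_n$ established in Lemma \ref{LemCompact}(iv).

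For the first equivalence, I would proceed as follows. For the ``only if'' direction, suppose $F$ has the \SCD property at $(x,y)$. Then $\Sp^* F(x,y)\neq\emptyset$, and by Remark \ref{RemSymmetry} also $\Sp F(x,y)\neq\emptyset$. Choosing any $L\in\Sp F(x,y)$, the defining formula for $\Sp F(x,y)$ produces a sequence $(x_k,y_k)\longsetto{\OO_F}(x,y)$, which immediately gives $(x,y)\in\cl\OO_F$. For the ``if'' direction, suppose $(x,y)\in\cl\OO_F$ and pick a sequence $(x_k,y_k)\in\OO_F$ with $(x_k,y_k)\to(x,y)$. By definition of $\OO_F$, each $\gph DF(x_k,y_k)$ belongs to $\Z_n$. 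Since $\Z_n$ is compact by Lemma \ref{LemCompact}(iv), we may pass to a subsequence along which $\gph DF(x_k,y_k)$ converges to some $L\in\Z_n$. Then $L\in\Sp F(x,y)$ by the very definition of this outer limit, so $\Sp F(x,y)\neq\emptyset$, and hence by Remark \ref{RemSymmetry} also $\Sp^*F(x,y)\neq\emptyset$, proving the \SCD property at $(x,y)$.

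For the second assertion, I would simply combine the first equivalence with the definition of an \SCD mapping: $F$ is an \SCD mapping iff $F$ has the \SCD property at every $(x,y)\in\gph F$, which by what was just shown is equivalent to $\gph F\subseteq \cl\OO_F$. Because $\OO_F\subseteq \gph F$, one always has $\cl\OO_F\subseteq\cl\gph F$; on the other hand $\gph F\subseteq\cl\OO_F$ yields $\cl\gph F\subseteq\cl\OO_F$, so together these give $\cl\OO_F=\cl\gph F$. Conversely, $\cl\OO_F=\cl\gph F$ implies $\gph F\subseteq\cl\gph F=\cl\OO_F$, i.e., $\OO_F$ is dense in $\gph F$, yielding the \SCD property at every point of $\gph F$.

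The only nonroutine step is the compactness argument used in the ``if'' direction of the first equivalence, but this is delivered by Lemma \ref{LemCompact}(iv); no further calculation is needed. Everything else is bookkeeping around the definitions of $\OO_F$, $\Sp F$, $\Sp^*F$ and the symmetry $L\leftrightarrow L^*$ recorded in Remark \ref{RemSymmetry}.
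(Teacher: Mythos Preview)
Your proposal is correct and follows exactly the route the paper indicates: the paper does not give a detailed proof but simply remarks that the result follows readily from convergence in the compact metric space $\Z_n$, and your argument spells out precisely this compactness-plus-definitions reasoning.
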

The name ''\SCD property'' is motivated by the following statement.
\begin{lemma}\label{LemSCDname}
  Let $F:\R^n\tto\R^n$ and let $(x,y)\in\gph F$. Then $\bigcup\Sp^*F(x,y)\subseteq \gph D^*F(x,y)$.
\end{lemma}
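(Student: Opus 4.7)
The plan is to unwind all definitions and reduce the statement to the characterization of the limiting normal cone as an outer set-limit of regular normal cones.

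First, I would pick an arbitrary $L\in \Sp^*F(x,y)$ and, by the definition of $\Sp^*F(x,y)$ as an outer limit, obtain a sequence $(x_k,y_k)\longsetto{\OO_F}(x,y)$ such that the subspaces $L_k:=\gph DF(x_k,y_k)^*$ converge to $L$ in the metric $d_\Z$. Since $(x_k,y_k)\in\OO_F$, Remark~\ref{RemRegCoder} gives the crucial identification $L_k=\gph \widehat D^*F(x_k,y_k)$, so each $L_k$ is a graph of the regular coderivative at a nearby point of $\gph F$.

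Next, I would convert $d_\Z$-convergence into Painlev\'e--Kuratowski convergence by invoking Lemma~\ref{LemCompact}(v). This is the key bridge: for subspaces, convergence of orthogonal projections is equivalent to set-convergence, and in particular the inner limit of the $L_k$ contains $L$. Therefore, for any $(y^*,x^*)\in L$, there exists a sequence $(y^*_k,x^*_k)\in L_k$ with $(y^*_k,x^*_k)\to (y^*,x^*)$. Re-writing this in coderivative terms, we have $(x^*_k,-y^*_k)\in \widehat N_{\gph F}(x_k,y_k)$ for every $k$, with $(x_k,y_k)\to(x,y)$ in $\gph F$ and $(x^*_k,-y^*_k)\to(x^*,-y^*)$.

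Finally, the very definition of the limiting normal cone (as the outer set-limit of regular normal cones along sequences in the set) yields $(x^*,-y^*)\in N_{\gph F}(x,y)$, which by Definition~\ref{DefGenDeriv}(iv) is the same as $(y^*,x^*)\in\gph D^*F(x,y)$. Since $(y^*,x^*)\in L$ was arbitrary and $L\in\Sp^*F(x,y)$ was arbitrary, the inclusion $\bigcup\Sp^*F(x,y)\subseteq \gph D^*F(x,y)$ follows.

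There is no real obstacle here; the argument is essentially a bookkeeping exercise. The only subtle step is the passage from $d_\Z$-convergence of the $L_k$ to the existence of a recovery sequence $(y^*_k,x^*_k)\in L_k$ converging to a prescribed $(y^*,x^*)\in L$, but this is exactly the content of Lemma~\ref{LemCompact}(v) combined with the standard description of Painlev\'e--Kuratowski inner limits.
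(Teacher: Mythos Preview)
Your proposal is correct and follows essentially the same route as the paper: pick $L\in\Sp^*F(x,y)$, approximate it by $L_k=\gph\widehat D^*F(x_k,y_k)$ via Remark~\ref{RemRegCoder}, extract convergent elements, and pass to the limit using the definition of the limiting coderivative. The only cosmetic difference is that the paper extracts convergent elements by choosing orthonormal basis matrices $Z_k\in\M^{\rm orth}(L_k)$ and passing to a subsequence so that $Z_k\to Z\in\M(L)$, whereas you invoke Lemma~\ref{LemCompact}(v) to get Painlev\'e--Kuratowski convergence and then use the inner-limit description; both devices accomplish exactly the same thing.
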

\begin{proof}
  Let $L\in \Sp^*F(x,y)$ and consider a sequence $(x_k,y_k,L_k)\to(x,y,L)$ with $(x_k,y_k)\in\OO_F$ and $L_k:=(\gph DF(x_k,y_k))^*\in \widehat \Sp^*F(x_k,y_k)$. By Remark \ref{RemRegCoder} we have $L_k = \gph \widehat D^*F(x_k,y_k)$. Consider  $Z_k\in \M^{\rm orth}(L_k)$. By possibly passing to a subsequence the matrices $Z_k$ converge to some $Z$ and $L=\rge Z$ by Lemma \ref{LemDiffSingleValued}. Taking into account $Z_kp\in L_k= \gph \widehat D^*F(x_k,y_k)$, we obtain $Zp=\lim_{k\to\infty}Z_kp\in \gph D^*F(x,y)$ $\forall p\in\R^n$ by the Definitions \ref{DefVarGeom}, \ref{DefGenDeriv} showing that $L\subseteq \gph D^*F(x,y)$. Since this holds for every $L\in\Sp^* F(x,y)$, the assertion follows.
\end{proof}

We will now show that the primal subspaces $L\in\Sp F(x,y)$ also belong to the graph of some suitable generalized derivative mapping. Consider the following definition.

\begin{definition}\label{DefLimGrDer}
  \begin{enumerate}
    \item Let $A\subset \R^n$ and let $\xb\in A$. The {\em outer limiting tangent cone} to $A$ at $\xb$ is defined as
    \begin{equation}\label{EqLimTanCone}
      T^\sharp_A(\xb):=\Limsup_{x\setto{{A}}\xb} T_A(x)= \Limsup_{x\setto{{A}}\xb}\Big(\Limsup_{t\downarrow 0} \frac {A-x}t\Big)
    \end{equation}
    \item Consider a  multifunction $F:\R^n\tto\R^m$ and let $(\xb,\yb)\in \gph F$. The {\em outer limiting graphical derivative} of $F$ at $(\xb,\yb)$ is the multifunction
    $D^\sharp F(\xb,\yb):\R^n\tto\R^m$ given by
    \[\gph D^\sharp F(\xb,\yb)= T^\sharp_{\gph F}(\xb,\yb).\]
  \end{enumerate}
\end{definition}
\begin{remark}
  Comparing \eqref{EqLimTanCone} with the definition of the paratingent cone $T^P_A(\xb)$ it follows that $T^\sharp_A(\xb)\subseteq T^P_A(\xb)$ and therefore $D^\sharp F(\xb,\yb)(u)\subseteq D_* F(\xb,\yb)(u)$, $u\in\R^n$.
\end{remark}

Using similar arguments as in the proof of Lemma \ref{LemSCDname} one obtains the following result.
\begin{lemma}\label{LemSCDname1}
  Let $F:\R^n\tto\R^n$ and let $(x,y)\in\gph F$. Then $\bigcup\Sp F(x,y)\subseteq \gph D^\sharp F(x,y)$.
\end{lemma}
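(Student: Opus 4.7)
The plan is to unfold both sides by definition and apply Lemma \ref{LemCompact}(v), which identifies metric convergence in $\Z_n$ with Painlev\'e--Kuratowski set convergence. Fix $L\in\Sp F(x,y)$; by Definition \ref{DefSCDProperty} there exists a sequence $(x_k,y_k)\longsetto{{\OO_F}}(x,y)$ with $d_\Z(L,\gph DF(x_k,y_k))\to 0$. Since each $(x_k,y_k)\in\OO_F$, the defining identity $T_{\gph F}(x_k,y_k)=\gph DF(x_k,y_k)$ is available.

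Next, Lemma \ref{LemCompact}(v) lets me recast the metric convergence $d_\Z(L,\gph DF(x_k,y_k))\to 0$ as the Painlev\'e--Kuratowski convergence $L=\Lim_{k\to\infty}T_{\gph F}(x_k,y_k)$. The inner-limit part of this convergence supplies, for every $z\in L$, a sequence $z_k\in T_{\gph F}(x_k,y_k)$ with $z_k\to z$. Because $(x_k,y_k)\to (x,y)$ along $\gph F$, Definition \ref{DefLimGrDer} immediately yields $z\in T^\sharp_{\gph F}(x,y)=\gph D^\sharp F(x,y)$. As $z\in L$ and $L\in\Sp F(x,y)$ were arbitrary, the asserted inclusion $\bigcup\Sp F(x,y)\subseteq \gph D^\sharp F(x,y)$ follows.

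A parallel route, closer to the proof of Lemma \ref{LemSCDname}, would proceed with orthonormal basis matrices $Z_k\in\M^{\rm orth}(\gph DF(x_k,y_k))$: after passing to a convergent subsequence $Z_k\to Z$, Lemma \ref{LemCompact}(i) gives $L=\rge Z$; for any $z=Zp\in L$ one has $Z_kp\in T_{\gph F}(x_k,y_k)$ and $Z_kp\to z$, so $z\in T^\sharp_{\gph F}(x,y)$ by Definition \ref{DefLimGrDer}. In either variant the only conceptual step worth flagging is the transition from metric convergence of subspaces in $\Z_n$ to approximation of individual vectors lying in those subspaces, but this is precisely what Lemma \ref{LemCompact} has been set up to handle, so I do not expect a genuine obstacle.
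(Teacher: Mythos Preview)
Your proposal is correct, and in fact your ``parallel route'' via orthonormal basis matrices $Z_k\in\M^{\rm orth}(\gph DF(x_k,y_k))$ is exactly the argument the paper intends: the paper does not spell out a separate proof for this lemma but simply refers to ``similar arguments as in the proof of Lemma~\ref{LemSCDname}'', which is precisely the basis-matrix route you describe. Your first variant, passing through Painlev\'e--Kuratowski convergence via Lemma~\ref{LemCompact}(v), is a legitimate and slightly cleaner reformulation of the same idea; the only difference is that you let Lemma~\ref{LemCompact}(v) produce the approximating vectors $z_k\to z$ instead of building them explicitly as $Z_kp\to Zp$.
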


For single-valued mappings the constructions of Definition \ref{DefSCDProperty} are related to  the B-subdifferential.
\begin{lemma}\label{LemSCDSingleValued}
  Let $U\subset \R^n$ be open and let $f:U\to\R^n$ be continuous. Then for every $x\in U$ there holds
  \begin{align}
    \label{EqGenBSubdiff1}&\Sp f(x):=\Sp(x,f(x))\supseteq \{\rge(I,A)\mv A\in\overline{\nabla} f(x)\},\\
    \label{EqGenBSubdiff2}&\Sp^* f(x):=\Sp^*(x,f(x))\supseteq \{\rge(I,A^T)\mv A\in\overline{\nabla} f(x)\}.
  \end{align}
  If $f$ is Lipschitz continuous near $x$, these inclusions hold with equality and $f$ has the \SCD property around $x$.
\end{lemma}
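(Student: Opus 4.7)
The plan is to prove the two inclusions first, then handle the Lipschitz case (both the reverse inclusions and the SCD property around $x$). Throughout the key computational ingredient will be the identity $(\rge(I,B))^\ast=\rge(I,B^T)$ for any $n\times n$ matrix $B$, which I would verify by writing $\rge(I,B)=\{(u,Bu)\mid u\in\R^n\}$, determining $\rge(I,B)^\perp=\{(-B^Tv^\ast,v^\ast)\mid v^\ast\in\R^n\}$, and then applying the definition \eqref{EqDualSubspace} of $L^\ast$.

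For the forward inclusions \eqref{EqGenBSubdiff1} and \eqref{EqGenBSubdiff2}, I would start from $A\in\overline\nabla f(x)$ with an approximating sequence $x_k\to x$ where $f$ is Fr\'echet differentiable and $\nabla f(x_k)\to A$. Lemma \ref{LemDiffSingleValued}(i) yields $T_{\gph f}(x_k,f(x_k))=\rge(I,\nabla f(x_k))\in\Z_n$, so $(x_k,f(x_k))\in\OO_f$; continuity of $f$ delivers $(x_k,f(x_k))\to(x,f(x))$. Lemma \ref{LemCompact}(i) applied to the $2n\times n$ basis matrices $(I,\nabla f(x_k))^T\to(I,A)^T$ shows $\rge(I,\nabla f(x_k))\to\rge(I,A)$ in $\Z_n$, proving $\rge(I,A)\in\Sp f(x)$. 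For $\Sp^\ast f(x)$, the identity above together with the isometry property of $L\mapsto L^\ast$ (established in the paragraph after \eqref{EqDualSubspace}) give $\rge(I,\nabla f(x_k))^\ast=\rge(I,\nabla f(x_k)^T)\to\rge(I,A^T)$ in $\Z_n$.

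Now assume $f$ is Lipschitz near $x$ with constant $\kappa$. For the SCD property around $x$, Rademacher's theorem guarantees that the set of points where $f$ is differentiable is dense in a neighborhood of $x$; combined with Lemma \ref{LemDiffSingleValued}(i) this means $\OO_f$ is dense in $\gph f$ near $(x,f(x))$ (using continuity of $f$), so by Lemma \ref{LemSCDproperty}, $f$ has the SCD property at every $(x',f(x'))$ in this neighborhood.

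For the reverse inclusions, take any $L\in\Sp f(x)$ with associated sequence $(x_k,f(x_k))\longsetto{\OO_f}(x,f(x))$ and $\gph Df(x_k,f(x_k))\to L$. Since $f$ is Lipschitz near $x$, it is calm at each $x_k$ for large $k$, and $(x_k,f(x_k))\in\OO_f$ means $T_{\gph f}(x_k,f(x_k))\in\Z_n$; Lemma \ref{LemDiffSingleValued}(ii) then yields differentiability of $f$ at $x_k$ with $\gph Df(x_k,f(x_k))=\rge(I,\nabla f(x_k))$. Since $\norm{\nabla f(x_k)}\le\kappa$, I can pass to a subsequence along which $\nabla f(x_k)\to A$ for some $A$ with $\norm{A}\le\kappa$; then $A\in\overline\nabla f(x)$ and Lemma \ref{LemCompact}(i) gives $\rge(I,\nabla f(x_k))\to\rge(I,A)$, whence by uniqueness of the limit in the compact space $\Z_n$, $L=\rge(I,A)$. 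The same argument (using the already-established $\rge(I,B)^\ast=\rge(I,B^T)$) handles $\Sp^\ast f(x)$. The main obstacle is essentially administrative rather than conceptual: one has to keep track of the distinction between the primal and dual subspace constructions and verify that the compact-metric-space limit in $\Z_n$ is compatible with subsequential convergence of the matrices $\nabla f(x_k)$, but once the $(\cdot)^\ast$ identity and Lemma \ref{LemCompact}(i) are in hand, everything reduces to routine bookkeeping.
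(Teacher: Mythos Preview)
Your proposal is correct and follows essentially the same approach as the paper's proof: both establish the forward inclusions via Lemma~\ref{LemDiffSingleValued}(i) and Lemma~\ref{LemCompact}(i), reduce the dual statement to the identity $\rge(I,A)^*=\rge(I,A^T)$, and in the Lipschitz case use boundedness of $\nabla f(x_k)$ to extract a convergent subsequence landing in $\overline{\nabla} f(x)$. Your argument for the \SCD property around $x$ (density of $\OO_f$ via Rademacher plus Lemma~\ref{LemSCDproperty}) is a minor variant of the paper's (nonemptiness of $\overline{\nabla} f(x')$ for nearby $x'$), but the content is the same.
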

\begin{proof}
   Consider $x\in U$ and $A\in \overline{\nabla} f(x)$ together with sequences $x_k\to x$ and $\nabla f(x_k)\to A$. Then for each $k$ we have $T_{\gph f}(x_k,f(x_k))= \rge(I,\nabla f(x_k))=:L_k\in\Z_n$ by Lemma \ref{LemDiffSingleValued} implying that $(x_k,f(x_k))\in\OO_f$ and $\widehat\Sp f(x_k)=\{L_k\}$. Thus the subspaces $L_k$ converge in $\Z_n$ to $\rge(I,A)\in \Sp f(x)$ by Lemma \ref{LemCompact}(i). This proves \eqref{EqGenBSubdiff1}. By taking into account the identity $\rge(I,A)^\perp=\rge(-A^T,I)$, it follows that $\rge(I,A)^*=\rge (I,A^T)$ verifying \eqref{EqGenBSubdiff2}. Now assume that $f$ is Lipschitzian near $x$ and consider $L\in\Sp f(x)$ together with a sequence $(x_k,f(x_k))\longsetto{{\OO_f}}(x,f(x))$ such that $L_k:=T_{\gph f}(x_k,f(x_k))\longsetto{{\Z_n}} L$. By Lemma \ref{LemDiffSingleValued} we conclude that $f$ is differentiable at $x_k$ and $L_k=\rge(I,\nabla f(x_k))$. By Lipschitz continuity of $f$ the derivatives $\nabla f(x_k)$ are bounded. Hence, by possibly passing to a subsequence, we can assume that $\nabla f(x_k)$ converges to some $A\in\overline{\nabla} f(x)$ and $L_k\longsetto{{\Z_n}} \rge(I,A)$ follows. This proves equality in \eqref{EqGenBSubdiff1} and equality in \eqref{EqGenBSubdiff2} easily follows from the identity $\rge(I,A)^*=\rge(I,A^T)$. Since $\overline{\nabla} f(x)\not=\emptyset$ for Lipschitz continuous mappings, the \SCD property at $x$ is established. This also holds for every point sufficiently  close to $x$ and thus $f$ has the \SCD property even around $x$.
\end{proof}

\begin{remark}
 In particular, every  Lipschitz continuous mapping $f:U\to\R^n$ with $U\subseteq \R^n$ open is an \SCD mapping. However, the converse is not true: Consider the function $f(x)=\sqrt{\vert x\vert}$ which is an \SCD mapping but not Lipschitz continuous.
\end{remark}

\begin{lemma}\label{LemLimSupSp}Consider a mapping  $F:\R^n\tto\R^n$ and let $(x,y)\in \gph F$. Then
\[\Sp F(x,y)=\Limsup_{(u,v)\longsetto{{\gph F}}(x,y)}\Sp F(u,v),\quad \Sp^* F(x,y)=\Limsup_{(u,v)\longsetto{{\gph F}}(x,y)}\Sp^* F(u,v).\]
\end{lemma}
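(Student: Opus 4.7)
We prove only the first equality; the second then follows immediately from Remark \ref{RemSymmetry} and the fact that $L\mapsto L^*$ is an isometry on $\Z_n$, so that taking $^*$ of both sides of the first identity yields the second.

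The inclusion ``$\subseteq$'' is almost tautological. Given $L\in\Sp F(x,y)$, pick a sequence $(x_k,y_k)\longsetto{{\OO_F}}(x,y)$ with $d_\Z\bigl(\gph DF(x_k,y_k),L\bigr)\to 0$. Since $(x_k,y_k)\in\OO_F$ we have $L_k:=\gph DF(x_k,y_k)\in\widehat\Sp F(x_k,y_k)\subseteq \Sp F(x_k,y_k)$. Hence $L$ is the limit in $\Z_n$ of a sequence $L_k\in\Sp F(x_k,y_k)$ with $(x_k,y_k)\longsetto{{\gph F}}(x,y)$, and so $L$ belongs to the right-hand side.

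For the reverse inclusion ``$\supseteq$'' I would use a diagonal argument. Suppose $L$ belongs to the outer limit on the right-hand side. Then there exist $(x_k,y_k)\longsetto{{\gph F}}(x,y)$ and $L_k\in\Sp F(x_k,y_k)$ with $d_\Z(L_k,L)\to 0$. By the definition of $\Sp F(x_k,y_k)$, for each fixed $k$ there is a sequence $(x_k^j,y_k^j)\longsetto{{\OO_F}}(x_k,y_k)$ as $j\to\infty$ satisfying $d_\Z(\gph DF(x_k^j,y_k^j),L_k)\to 0$. Choose for each $k$ an index $j(k)$ such that
\[\bnorm{(x_k^{j(k)},y_k^{j(k)})-(x_k,y_k)}\leq \tfrac1k \quad\text{and}\quad d_\Z\bigl(\gph DF(x_k^{j(k)},y_k^{j(k)}),L_k\bigr)\leq \tfrac1k.\]
Setting $\tilde x_k:=x_k^{j(k)}$, $\tilde y_k:=y_k^{j(k)}$, the triangle inequality together with $(x_k,y_k)\to(x,y)$ and $L_k\to L$ yields $(\tilde x_k,\tilde y_k)\longsetto{{\OO_F}}(x,y)$ and $d_\Z(\gph DF(\tilde x_k,\tilde y_k),L)\to 0$. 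This shows $L\in\Sp F(x,y)$.

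The only point requiring a little care is the diagonal extraction; this is made legitimate precisely because $\Z_n$ is a (compact) metric space in which convergence is governed by the single scalar distance $d_\Z$ introduced in the beginning of this section, so that the two smallness requirements on $(\tilde x_k,\tilde y_k)$ and $\gph DF(\tilde x_k,\tilde y_k)$ can be enforced simultaneously by a single choice of $j(k)$. No other obstacle arises.
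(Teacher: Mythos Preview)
Your proof is correct and follows essentially the same route as the paper: the ``$\subseteq$'' direction via $\widehat\Sp F\subseteq\Sp F$, and the ``$\supseteq$'' direction by the same diagonal extraction with the $1/k$ bounds on both the base point and the subspace distance. Your explicit derivation of the second identity from the first via the isometry $L\mapsto L^*$ is a minor addition the paper leaves implicit.
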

\begin{proof}
We prove only the first equation. The inclusion $\Sp F(x,y)\subseteq\Limsup_{(u,v)\longsetto{{\gph F}}(x,y)}\Sp F(u,v)= :S$ follows easily from  the definition of $\Sp F(x,y)$ together with $\widehat \Sp F(u,v)\subseteq \Sp F(u,v)$, $(u,v)\in\gph F$. In order to show the reverse inclusion, consider $L\in S$ together with sequences $(u_k,v_k)\longsetto{{\gph F}}(x,y)$ and $L_k\in \Sp F(u_k,v_k)$ with $L_k\longsetto{{\Z_n}}L$. By definition, for every $k$ we can find $(u_k',y_k')\in\OO_F$ and $L_k'\in \widehat\Sp F(u_k,v_k))$ such that $\norm{(u_k,v_k)-(u_k',v_k')}\leq\frac 1k$ and $d_\Z(L_k,L_k')\leq\frac 1k$. Thus $(u_k',v_k')\longsetto{{\OO_F}}(x,y)$ and $L_k'\longsetto{{\Z_n}}L$ verifying $L\in \Sp F(x,y)$.
\end{proof}

We now provide some calculus rules.
\begin{proposition}\label{PropComposit}
  Given a mapping $G:\R^n\tto\R^n$ and a mapping $\Phi:\R^{2n}\to\R^{2n}$, consider the mapping $F:\R^n\tto\R^n$ given by
  \[\gph F=\{(x,y)\mv \Phi(x,y)\in\gph G\}.\]
  Then for every $(x,y)\in\gph F$ such that $\Phi$ is continuously differentiable in some neighborhood of $(x,y)$ and $\nabla \Phi(x,y)$ is nonsingular, there holds
  \begin{align}
   \label{EqCompositPr} & \Sp F(x,y)=\nabla \Phi(x,y)^{-1} \Sp G(\Phi(x,y))(:= \{\nabla \Phi(x,y)^{-1}L\mv L\in \Sp G(\Phi(x,y))\}),\\
   \label{EqCompositDu} & \Sp^* F(x,y)=S_n\nabla \Phi(x,y)^TS_n^T\Sp^* G(\Phi(x,y)).
  \end{align}
\end{proposition}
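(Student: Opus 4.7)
Since $\Phi$ is continuously differentiable near $(x,y)$ with nonsingular Jacobian, the inverse function theorem gives a local $C^1$-diffeomorphism between a neighborhood $U$ of $(x,y)$ and a neighborhood $V$ of $\Phi(x,y)$ which, by construction of $\gph F$, maps $\gph F\cap U$ onto $\gph G\cap V$. The first, and really only, geometric ingredient I would use is the standard transformation rule for contingent cones under a $C^1$-diffeomorphism: for every $(x',y')\in\gph F\cap U$,
\[T_{\gph F}(x',y')=\nabla\Phi(x',y')^{-1}\,T_{\gph G}(\Phi(x',y')).\]
Since $\nabla\Phi(x',y')$ is a nonsingular $2n\times 2n$ matrix for $(x',y')$ close to $(x,y)$, the image under it (or its inverse) of an $n$-dimensional subspace is again an $n$-dimensional subspace. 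Consequently $T_{\gph F}(x',y')\in\Z_n$ if and only if $T_{\gph G}(\Phi(x',y'))\in\Z_n$, so that $\OO_F\cap U=\Phi^{-1}(\OO_G\cap V)$, and on $\OO_F\cap U$ one has $\widehat\Sp F(x',y')=\{\nabla\Phi(x',y')^{-1}L\}$ where $\{L\}=\widehat\Sp G(\Phi(x',y'))$.

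Next I would take the outer limit. For the inclusion $\nabla\Phi(x,y)^{-1}\Sp G(\Phi(x,y))\subseteq\Sp F(x,y)$ pick $M\in\Sp G(\Phi(x,y))$ and select $(a_k,b_k)\longsetto{{\OO_G}}\Phi(x,y)$ with $\gph DG(a_k,b_k)\longsetto{{\Z_n}}M$. Setting $(x_k,y_k):=\Phi^{-1}(a_k,b_k)$, one has $(x_k,y_k)\longsetto{{\OO_F}}(x,y)$; by continuity of $\nabla\Phi$ and Lemma \ref{LemCompact}(iii),
\[\gph DF(x_k,y_k)=\nabla\Phi(x_k,y_k)^{-1}\gph DG(a_k,b_k)\longsetto{{\Z_n}}\nabla\Phi(x,y)^{-1}M,\]
proving $\nabla\Phi(x,y)^{-1}M\in\Sp F(x,y)$. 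The reverse inclusion is obtained symmetrically: given $L\in\Sp F(x,y)$, choose a realising sequence $(x_k,y_k)\longsetto{{\OO_F}}(x,y)$, pass through $(a_k,b_k):=\Phi(x_k,y_k)$, extract a subsequence along which the compact metric space $\Z_n$ (Lemma \ref{LemCompact}(iv)) yields a limit $M\in\Sp G(\Phi(x,y))$ of $\gph DG(a_k,b_k)$, and invoke Lemma \ref{LemCompact}(iii) once more to identify $L=\nabla\Phi(x,y)^{-1}M$. This establishes \eqref{EqCompositPr}.

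For the dual identity \eqref{EqCompositDu} I would first record the following purely linear-algebraic fact: for any nonsingular $2n\times 2n$ matrix $B$ and any $L\in\Z_n$,
\[(BL)^*=S_n(BL)^\perp=S_nB^{-T}L^\perp=S_nB^{-T}S_n^T L^*,\]
where the middle equality uses $w\in(BL)^\perp\iff B^Tw\in L^\perp$ and the last one uses $L^\perp=S_n^TL^*$. Applying this with $B=\nabla\Phi(x,y)^{-1}$ gives $(\nabla\Phi(x,y)^{-1}L)^*=S_n\nabla\Phi(x,y)^T S_n^T L^*$. Combining this with \eqref{EqCompositPr} and Remark \ref{RemSymmetry} yields
\[\Sp^* F(x,y)=\{L^*\mv L\in\Sp F(x,y)\}=\{S_n\nabla\Phi(x,y)^TS_n^TM^*\mv M\in\Sp G(\Phi(x,y))\},\]
which is exactly \eqref{EqCompositDu}.

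The only non-routine point is the continuous dependence step: one needs to know that the map $(B,L)\mapsto BL$ from nonsingular $2n\times 2n$ matrices times $\Z_n$ into $\Z_n$ is (jointly) continuous, so that limits of $\gph DF(x_k,y_k)$ and $\gph DG(a_k,b_k)$ can be matched. That is exactly the content of Lemma \ref{LemCompact}(iii), so the argument goes through without additional work.
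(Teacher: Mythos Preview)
Your proof is correct and follows essentially the same approach as the paper: inverse function theorem for the local diffeomorphism, the tangent-cone transformation rule $T_{\gph F}(x',y')=\nabla\Phi(x',y')^{-1}T_{\gph G}(\Phi(x',y'))$, identification of $\OO_F$ with $\Phi^{-1}(\OO_G)$ locally, passage to outer limits via Lemma~\ref{LemCompact}(iii), and the same linear-algebra computation $(\nabla\Phi(x,y)^{-1}L)^*=S_n\nabla\Phi(x,y)^TS_n^TL^*$ for the dual formula. The only cosmetic difference is that for the inclusion $\Sp F(x,y)\subseteq\nabla\Phi(x,y)^{-1}\Sp G(\Phi(x,y))$ you extract a convergent subsequence of $\gph DG(a_k,b_k)$ via compactness of $\Z_n$ and then identify its limit, whereas the paper applies Lemma~\ref{LemCompact}(iii) directly to $\nabla\Phi(x_k,y_k)L_k$ to avoid the subsequence step; both arguments are valid.
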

\begin{proof}By the classical Inverse Function Theorem, there is some open neighborhood $W$ of $(x,y)$ such that $\Phi$ is a one-to-one mapping from $W$ to the open neighborhood $\tilde W:=\Phi(W)$ of $\Phi(x,y)$ and $\nabla\Phi(x',y')$ is nonsingular for every $(x',y')\in W$.
  By \cite[Exercise 6.7]{RoWe98} we have $T_{\gph F}(x',y')=\nabla \Phi(x',y')^{-1}T_{\gph G}(\Phi(x',y'))$ for all $(x',y')\in \gph F\cap W$ and it follows that $\OO_G\cap\tilde W=\Phi(\OO_F\cap W)$. Consider $L\in\Sp F(x,y)$ together with sequences $(x_k,y_k)\longsetto{\OO_F}(x,y)$ and $L_k\in\widehat\Sp F(x_k,y_k)$ converging to $L$. Then for all $k$ sufficiently large we have $T_{\gph G}(\Phi(x_k,y_k))=\nabla \Phi(x_k,y_k)T_{\gph F}(x_k,y_k)=\nabla \Phi(x_k,y_k)L_k\in \widehat \Sp G(\Phi(x_k,y_k)$ showing $\nabla \Phi(x,y)L=\lim_{k\to\infty}\nabla \Phi(x_k,y_k)L_k\in \Sp G(\Phi(x,y))$ by Lemma \ref{LemCompact}(iii). This proves that $\Sp F(x,y)\subseteq \nabla \Phi(x,y)^{-1}\Sp G(\Phi(x,y))$.

  To show the reverse inclusion, consider $L\in \Sp G(\Phi(x,y))$ together with sequences $z_k\longsetto{\OO_G\cap\tilde W}\Phi(x,y)$ and $L_k\in\widehat \Sp G(z_k)$ with $L_k\to L$.
  It follows that the sequence $(x_k,y_k):=\Phi^{-1}(z_k)\cap W$ converges to $(x,y)$ and $T_{\gph F}(x_k,y_k)=\nabla \Phi(x_k,y_k)^{-1}L_k\in \widehat \Sp F(x_k,y_k)$ implying $\nabla\Phi(x,y)^{-1}L\in \Sp F(x,y)$ by Lemma \ref{LemDiffSingleValued}(iii). Hence $\nabla \Phi(x,y)^{-1}\Sp G(\Phi(x,y))\subseteq\Sp F(x,y)$ and equation \eqref{EqCompositPr} follows. To prove the equation \eqref{EqCompositDu}, just use Remark \ref{RemSymmetry} together with the fact that for any $L\in\Z_n$ we have $(\nabla\Phi(x,y)^{-1}L)^\perp=\nabla\Phi(x,y)^TL^\perp$ implying \[(\nabla\Phi(x,y)^{-1}L)^*=S_n\nabla\Phi(x,y)^TL^\perp=S_n\nabla\Phi(x,y)^TS_n^TL^*.\]

\end{proof}

\begin{proposition}\label{PropSum}
  Let $F:\R^n\tto\R^n$ have the \SCD property at $(x,y)\in\gph F$ and let $h:U\to\R^n$ be continuously differentiable at $x\in U$ where $U\subseteq\R^n$ is open. Then $F+h$ has the \SCD property at $(x,y+h(x))$ and
  \begin{align}\label{EqSCDSum1}\Sp (F+h)(x,y+h(x))=\Big\{\left(\begin{matrix}I&0\\\nabla h(x)&I\end{matrix}\right)L\mv L\in \Sp F(x,y)\Big\}\\
  \label{EqSCDSum2}\Sp^* (F+h)(x,y+h(x))=\Big\{\left(\begin{matrix}I&0\\\nabla h(x)^T&I\end{matrix}\right)L\mv L\in \Sp^* F(x,y)\Big\}
  \end{align}
\end{proposition}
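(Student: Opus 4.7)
The plan is to reduce everything to Proposition~\ref{PropComposit} by writing $\gph(F+h)$ as the preimage of $\gph F$ under an invertible change of coordinates. Define $\Phi:U\times\R^n\to\R^n\times\R^n$ by
\[\Phi(x',y'):=(x',\,y'-h(x')).\]
Then $(x',y')\in\gph(F+h)$ if and only if $y'-h(x')\in F(x')$, i.e.\ $\Phi(x',y')\in\gph F$, so $\gph(F+h)=\{(x',y')\mv \Phi(x',y')\in\gph G\}$ with $G:=F$. Since $h$ is continuously differentiable near $x$, $\Phi$ is continuously differentiable near $(x,y+h(x))$ with Jacobian
\[\nabla\Phi(x,y+h(x))=\left(\begin{matrix}I&0\\-\nabla h(x)&I\end{matrix}\right),\]
which is nonsingular (block lower triangular with identity blocks on the diagonal) and has inverse $\left(\begin{smallmatrix}I&0\\\nabla h(x)&I\end{smallmatrix}\right)$. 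Proposition~\ref{PropComposit} then applies at $(x,y+h(x))$ with $\Phi(x,y+h(x))=(x,y)$.

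The primal formula \eqref{EqSCDSum1} is now an immediate consequence of \eqref{EqCompositPr}, since
\[\Sp(F+h)(x,y+h(x))=\nabla\Phi(x,y+h(x))^{-1}\Sp F(x,y)=\left(\begin{matrix}I&0\\\nabla h(x)&I\end{matrix}\right)\Sp F(x,y).\]
For the dual formula \eqref{EqSCDSum2}, I will apply \eqref{EqCompositDu} and carry out the conjugation by $S_n$. A short direct computation using $S_n=\left(\begin{smallmatrix}0&-I\\I&0\end{smallmatrix}\right)$ and $\nabla\Phi(x,y+h(x))^T=\left(\begin{smallmatrix}I&-\nabla h(x)^T\\0&I\end{smallmatrix}\right)$ gives
\[S_n\nabla\Phi(x,y+h(x))^TS_n^T=\left(\begin{matrix}I&0\\\nabla h(x)^T&I\end{matrix}\right),\]
from which \eqref{EqSCDSum2} follows. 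This is the only slightly fiddly step; everything else is essentially bookkeeping.

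Finally, the SCD property of $F+h$ at $(x,y+h(x))$ is automatic: by hypothesis $\Sp^*F(x,y)\neq\emptyset$, and \eqref{EqSCDSum2} expresses $\Sp^*(F+h)(x,y+h(x))$ as the image of this nonempty set under an invertible linear map, so it is also nonempty (using Remark~\ref{RemSymmetry}). I do not anticipate any genuine obstacle; the whole argument amounts to choosing the correct diffeomorphism $\Phi$ so that Proposition~\ref{PropComposit} can be invoked, and then matching the resulting matrix identities with \eqref{EqSCDSum1}–\eqref{EqSCDSum2}.
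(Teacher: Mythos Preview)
Your proposal is correct and follows exactly the paper's approach: the paper's proof simply observes that $\gph(F+h)=\{(x,y)\mv (x,y-h(x))\in\gph F\}$ and invokes Proposition~\ref{PropComposit} with $\Phi(x,y)=(x,y-h(x))$, leaving the matrix computations implicit. You have spelled out those computations (the inverse of $\nabla\Phi$ and the conjugation $S_n\nabla\Phi^T S_n^T$) correctly.
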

\begin{proof}
    We have $\gph (F+h)=\{(u,v+h(u))\mv (u,v)\in \gph F\}=\{(x,y)\mv (x,y-h(x))\in\gph F\}$ and the assertion follows from Proposition \ref{PropComposit} with $\Phi(x,y)=(x,y-h(x))$.
\end{proof}
Next, let us proceed to the large class of graphically Lipschitzian mappings.
\begin{definition}[cf.{\cite[Definition 9.66]{RoWe98}}]\label{DefGraphLip}A mapping $F:\R^n\tto\R^m$ is {\em graphically Lipschitzian of dimension $d$} at $(\xb,\yb)\in\gph F$ if there is an open neighborhood $W$ of $(\xb,\yb)$ and a one-to-one mapping $\Phi$ from $W$ onto an open subset of $\R^{n+m}$ with $\Phi$ and $\Phi^{-1}$ continuously differentiable, such that $\Phi(\gph F\cap W)$  is the graph of a Lipschitz continuous mapping $f:U\to\R^{n+m-d}$, where $U$ is an open set in $\R^d$.
\end{definition}
In what follows we will refer to the mapping $\Phi$ as {\em transformation mapping}.
\begin{proposition}\label{PropGraphLipSCD}
  Assume that $F:\R^n\tto\R^n$ is graphically Lipschitzian of dimension $n$ at $(\xb,\yb)\in\gph F$ with transformation mapping $\Phi$. Then $F$ has the \SCD property around $(\xb,\yb)$ and for every $(x,y)\in\gph F$, sufficiently close to $(\xb,\yb)$, one has
  \begin{subequations}\label{EqGraphLipSCD}
  \begin{align}
  \label{EqGraphLipSCD1}&\Sp F(x,y)=\nabla \Phi(x,y)^{-1} \Sp f(u)=\Big\{\rge \left[\nabla \Phi(x,y)^{-1}\myvec{I\\ B}\right]\mv B\in\overline{\nabla} f( u)\Big\},\\
  \label{EqGraphLipSCD2}&\Sp^*F(x,y)=S_n\nabla \Phi(x,y)^TS_n^T \Sp^*f(u)=\Big\{\rge \left[S_n\nabla \Phi(x,y)^TS_n^T\myvec{I\\ B^T}\right]\mv B\in\overline{\nabla} f(u)\Big\},
  \end{align}
  \end{subequations}
  where $f$ is as in Definition \ref{DefGraphLip} and $u:=\pi_1(\Phi(x,y))$.
\end{proposition}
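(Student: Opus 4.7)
The strategy is to combine the calculus rule of Proposition \ref{PropComposit} with the characterization of $\Sp f$ and $\Sp^* f$ for Lipschitzian single-valued mappings supplied by Lemma \ref{LemSCDSingleValued}.

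First I would fix the data of Definition \ref{DefGraphLip}: the open neighborhood $W$ of $(\xb,\yb)$, the diffeomorphism $\Phi$ between $W$ and $\tilde W:=\Phi(W)\subseteq \R^{2n}$, the open set $U\subseteq \R^n$ and the Lipschitz map $f:U\to\R^n$ such that $\Phi(\gph F\cap W)=\gph f\cap \tilde W$. Set $\ub:=\pi_1(\Phi(\xb,\yb))$. Since $\Phi$ and $\Phi^{-1}$ are $C^1$, $\nabla\Phi(x,y)$ is nonsingular at every $(x,y)\in W$, and $\gph F\cap W=\Phi^{-1}(\gph f\cap\tilde W)$.

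Next, at any $(x,y)\in\gph F\cap W$ write $u:=\pi_1(\Phi(x,y))$. Although Proposition \ref{PropComposit} is phrased with a global identity of the form $\gph F=\{(x,y)\mv\Phi(x,y)\in\gph G\}$, its proof relies on the classical Inverse Function Theorem on a neighborhood of the reference point and the resulting formulas depend only on the restrictions of $\gph F$ and $\gph G$ to suitably small neighborhoods of $(x,y)$ and $\Phi(x,y)$, respectively. Applying that local argument with $G:=f$ yields
\[\Sp F(x,y)=\nabla\Phi(x,y)^{-1}\Sp f(u),\qquad \Sp^* F(x,y)=S_n\nabla\Phi(x,y)^TS_n^T\Sp^* f(u).\]

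It then remains to evaluate the right-hand sides. Since $f$ is Lipschitz continuous near every $u$ close to $\ub$, Lemma \ref{LemSCDSingleValued} gives
\[\Sp f(u)=\{\rge(I,B)\mv B\in\overline{\nabla}f(u)\},\qquad \Sp^* f(u)=\{\rge(I,B^T)\mv B\in\overline{\nabla}f(u)\},\]
and asserts in addition that both sets are nonempty. Substituting these equalities into the two displays above yields \eqref{EqGraphLipSCD1} and \eqref{EqGraphLipSCD2}. The \SCD property of $F$ around $(\xb,\yb)$ follows immediately because the nonemptiness of $\Sp^* f(u)$, and hence of $\Sp^* F(x,y)=S_n\nabla\Phi(x,y)^TS_n^T\Sp^* f(u)$, persists at every $(x,y)\in\gph F$ sufficiently close to $(\xb,\yb)$.

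The only real subtlety lies in legitimizing the invocation of Proposition \ref{PropComposit} when the graph identity $\gph F=\Phi^{-1}(\gph f)$ holds only on $W$; this is handled by noting that $\Sp$, $\Sp^*$ and $\OO_F$ are outer-limit constructions and are therefore determined by the graph on an arbitrarily small neighborhood of the relevant point, so the proof of Proposition \ref{PropComposit} applies verbatim after one restricts attention to such a neighborhood.
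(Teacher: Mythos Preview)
Your proposal is correct and follows exactly the approach of the paper, which simply states that the result ``follows from Proposition \ref{PropComposit} together with Lemma \ref{LemSCDSingleValued}.'' Your additional remarks on the locality issue and on deducing the \SCD property from nonemptiness of $\Sp^* f(u)$ merely make explicit what the paper leaves implicit.
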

\begin{proof} Follows from Proposition  \ref{PropComposit} together with Lemma \ref{LemSCDSingleValued}.
\end{proof}
\begin{remark}\label{RemProtoDiff}Note that for a graphically Lipschitizian mapping $F$ with transformation mapping $\Phi$ we have $\Phi(\OO_F\cap W)=\OO_f$  by the proof of Proposition \ref{PropComposit}, where $W$ and $f$ are as in Definition \ref{DefGraphLip}. At points $(u,f(u))\in \OO_f$ the mapping $f$ is Fr\'echet differentiable at $u$ by Lemma \ref{LemDiffSingleValued}(ii) and therefore $T_{\gph f}(u,f(u))={\rm Lim}_{t\downarrow 0}t^{-1}(\gph f- (u,f(u)))$. Since the graphs of $F$ and $f$ coincide locally up to a change of coordinates, we may conclude that
$T_{\gph F}(x,y)={\rm Lim}_{t\downarrow 0}t^{-1}(\gph F-(x,y))$, $(x,y)\in \OO_F\cap W$, i.e., $F$ is proto-differentiable at these points, cf. \cite[Section 8.H]{RoWe98}.\end{remark}
\begin{corollary}\label{CorStrongMetrReg}Let $F:\R^n\tto\R^n$ and let $(\xb,\yb)\in\gph F$ be given. Suppose that there is an open neighborhood $V$ of $\xb$ and a continuously differentiable mapping $h:V\to\R^n$ such that $F+h$ is strongly metrically regular around $(\xb,\yb+h(\xb))$. Then $F$ is graphically Lipschitzian of dimension $n$ with transformation mapping $\Phi(x,y)=(y+h(x),x)$.
Therefore $F$ has the \SCD property around $(\xb,\yb)$ and for every $(x,y)\in\gph F$ sufficiently close to $(\xb,\yb)$ one has
\begin{subequations}\label{EqSpStrMetrReg}
  \begin{align}\label{EqSpStrMetrReg1}\Sp F(x,y)&=\{\rge(B, I-\nabla h(x)B)\mv B\in\overline{\nabla}(F+h)^{-1}(y+h(x))\}\\
  \nonumber&=\left(\begin{matrix}0&I\\I&-\nabla h(\xb)\end{matrix}\right)\Sp(F+h)^{-1}(\yb+h(\xb),\xb),\\
  \label{EqSpStrMetrReg2}\Sp^*F(x,y)&=\{\rge(B^T, I-\nabla h(x)^TB^T)\mv B\in\overline{\nabla}(F+h)^{-1}(y+h(x))\}\\
  \nonumber&=\left(\begin{matrix}0&I\\I&-\nabla h(x)^T\end{matrix}\right)\Sp^*(F+h)^{-1}(y+h(x),x).
  \end{align}
\end{subequations}
\end{corollary}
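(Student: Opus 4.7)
The plan is first to establish that $\Phi(x,y)=(y+h(x),x)$ is a valid transformation mapping in the sense of Definition \ref{DefGraphLip} identifying $\gph F$ locally with the graph of a Lipschitz function, and then to read off the four formulas from Proposition \ref{PropGraphLipSCD} (equivalently Proposition \ref{PropComposit} applied with $G=(F+h)^{-1}$). By Theorem \ref{ThStrMetrReg} the strong metric regularity of $F+h$ around $(\xb,\yb+h(\xb))$ yields a Lipschitz continuous localization $g:Y'\to X'$ of $(F+h)^{-1}$ around $(\yb+h(\xb),\xb)$. The map $\Phi$ is of class $C^1$ on $V\times\R^n$ with $C^1$ inverse $\Phi^{-1}(u,v)=(v,u-h(v))$, so it is a local diffeomorphism. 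Using the equivalence $(x,y)\in\gph F\iff (x,y+h(x))\in\gph(F+h)$, one checks directly (after shrinking a neighborhood $W$ of $(\xb,\yb)$ so that $\Phi(W)\subset Y'\times X'$) that $\Phi(\gph F\cap W)=\{(u,g(u))\mv u\in\tilde U\}$ for a suitable open $\tilde U\ni \yb+h(\xb)$. Hence $F$ is graphically Lipschitzian of dimension $n$ at $(\xb,\yb)$ with transformation mapping $\Phi$ and, by Proposition \ref{PropGraphLipSCD}, has the \SCD property around $(\xb,\yb)$.

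With this in hand, the two primal formulas are obtained by specialization. A short calculation gives $\nabla\Phi(x,y)=\left(\begin{matrix}\nabla h(x)&I\\I&0\end{matrix}\right)$ and $\nabla\Phi(x,y)^{-1}=\left(\begin{matrix}0&I\\I&-\nabla h(x)\end{matrix}\right)$. Using $\pi_1(\Phi(x,y))=y+h(x)$ together with the fact that $\overline{\nabla}g(y+h(x))$ is what the statement denotes by $\overline{\nabla}(F+h)^{-1}(y+h(x))$, formula \eqref{EqGraphLipSCD1} combined with the identity $\nabla\Phi(x,y)^{-1}\left(\begin{matrix}I\\B\end{matrix}\right)=\left(\begin{matrix}B\\I-\nabla h(x)B\end{matrix}\right)$ delivers the first line of \eqref{EqSpStrMetrReg1}. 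The second line is the same statement recast via Proposition \ref{PropComposit} applied directly with $G=(F+h)^{-1}$ and the same $\Phi$, since the equivalence above can be rewritten as $\Phi(x,y)\in\gph(F+h)^{-1}\iff(x,y)\in\gph F$.

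The dual identities in \eqref{EqSpStrMetrReg2} are derived in parallel via \eqref{EqGraphLipSCD2} and \eqref{EqCompositDu}, after computing $S_n\nabla\Phi(x,y)^TS_n^T=\left(\begin{matrix}0&-I\\-I&\nabla h(x)^T\end{matrix}\right)$ and observing that, because $-I_{2n}$ acts trivially on subspaces, this matrix sends every $L\in\Z_n$ to the same image as $\left(\begin{matrix}0&I\\I&-\nabla h(x)^T\end{matrix}\right)L$. Applied to $\rge(I,B^T)$ one obtains $\rge(B^T,I-\nabla h(x)^TB^T)$. The only nontrivial point in the whole argument is the neighborhood bookkeeping in the first paragraph, ensuring that $\Phi$ genuinely identifies $\gph F\cap W$ with the graph of the single-valued localization $g$ rather than with a larger piece of $\gph(F+h)^{-1}$; once that geometric identification is secured, the remainder is pure linear algebra driven by the calculus rules already established.
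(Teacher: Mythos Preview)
Your proof is correct and follows essentially the same route as the paper: invoke Theorem~\ref{ThStrMetrReg} to obtain a Lipschitz localization of $(F+h)^{-1}$, observe that $\Phi(x,y)=(y+h(x),x)$ identifies $\gph F$ locally with the graph of this localization, and then read off the formulas from Proposition~\ref{PropGraphLipSCD} using the explicit expressions for $\nabla\Phi(x,y)$, $\nabla\Phi(x,y)^{-1}$, and $S_n\nabla\Phi(x,y)^TS_n^T$. Your treatment is slightly more detailed than the paper's (which compresses everything into a single appeal to Proposition~\ref{PropGraphLipSCD} after displaying the three matrices), in particular by separately invoking Proposition~\ref{PropComposit} with $G=(F+h)^{-1}$ for the matrix-factored second lines and by spelling out the neighborhood bookkeeping; but the underlying argument is identical.
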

\begin{proof}
  By Theorem \ref{ThStrMetrReg} there are  open neighborhoods $U$ of $\yb+h(\xb)$, $W'$ of $(\yb+h(\xb),\xb)$ and a Lipschitz continuous mapping $f:U\to \R^n$ such that $\gph f=\gph (F+h)^{-1}\cap W'$. Since $\gph (F+h)^{-1}=\{(y+h(x),x)\mv  (x,y)\in\gph F,\ x\in V\}$,  $F$ is graphically Lipschitzian of dimension $n$ and \eqref{EqSpStrMetrReg}  follows from Proposition \ref{PropGraphLipSCD}  by taking into account that
  \[\nabla \Phi(x,y)=\left(\begin{matrix}\nabla h(x)&I\\I&0  \end{matrix} \right),\ \nabla \Phi(x,y)^{-1}=\left(\begin{matrix}0&I\\I&-\nabla h(x)  \end{matrix} \right),\
  \ S_n \nabla \Phi(x,y)^T S_n^T=-\left(\begin{matrix}0&I\\I&-\nabla h(x)^T\end{matrix}\right).\]
\end{proof}
Some examples of graphically Lipschitzian mappings $F:\R^n\tto\R^n$ of dimension $n$ were already given in \cite{Ro85,PolRo96}. Next, we extend these examples and give an explicit description of the subspaces contained in $\Sp F(x,y)$ and $\Sp^*F(x,y)$, respectively.
Recall that a mapping $F:\R^n\tto\R^n$ is said to be monotone if
\[\skalp{y_1-y_2,x_1-x_2}\geq 0\quad \mbox{ for all $(x_i,y_i)\in \gph F,\ i=1,2$.}\]
It is maximally monotone if, in addition,  there holds $\gph F=\gph T$ for every monotone mapping $T:\R^n\tto\R^n$ with $\gph F\subset\gph T$.
Next we define several types of local monotonicity.
\begin{definition}\label{DefLocMon}
Let $F:\R^n\tto\R^n$ and let $(x,y)\in\gph F$. We say the following:
\begin{enumerate}
  \item $F$ is {\em locally monotone} at $(x,y)$ if there is an open  neighborhood $X\times Y$ of $(x,y)$ such that
        \begin{equation}\label{EqLocMon}\skalp{y_1-y_2,x_1-x_2}\geq 0\quad \mbox{ for all $(x_i,y_i)\in \gph F\cap (X\times Y),\ i=1,2$}.\end{equation}
        It is {\em locally maximally monotone} if, in addition, there holds $\gph F\cap (X\times Y)=\gph T\cap (X\times Y)$ for every monotone mapping $T:\R^n\tto\R^n$ with $\gph F\cap (X\times Y)\subseteq \gph T$.
  \item $F$ is {\em locally (maximally) hypomonotone} at $(x,y)$ if $\gamma I+F$ is locally (maximally) monotone at $(x,\gamma x+y)$ for some $\gamma\geq 0$.
\end{enumerate}
\end{definition}
Related with maximally monotone operators are the so-called firmly nonexpansive mappings.
\begin{definition}
\begin{enumerate}
  \item A mapping $f:\R^n\to\R^n$ is called {\em firmly nonexpansive} if  $\skalp{f(u_1)-f(u_2),u_1-u_2}\geq \norm{f(u_1)-f(u_2)}^2$, $u_1,u_2\in\R^n$.
  \item An $n\times n$ matrix $B$ is called firmly nonexpansive, if the linear mapping $u\to Bu$ is firmly nonexpansive, i.e., $\skalp{Bv,v}\geq \norm{Bv}^2$, $v\in\R^n$.
\end{enumerate}
\end{definition}
Note that  an $n\times n$ matrix $B$ is firmly nonexpansive if and only if $\norm{2B-I}\leq 1$, see, e.g., \cite[Fact 1.1]{BauMofWa12}. Further, a firmly nonexpansive matrix $B$ is positive semidefinite and satisfies $\norm{B}\leq 1$ and, when $B$ is symmetric, these conditions are also sufficient for $B$ being firmly nonexpansive.
\begin{proposition}
Let $F:\R^n\tto\R^n$ be locally maximally monotone at $(\xb,\yb)$. Then $F$ is graphically Lipschtzian of dimension $n$ at $(\xb,\yb)$ with transformation mapping $\Phi(x,y)=(x+y,x)$ and consequently $F$ has the \SCD property around $(\xb,\yb)$. Further, for every $(x,y)\in\gph F$ sufficiently close to $(\xb,\yb)$  and for every subspace $L\in \Sp F(x,y)$ there is a firmly nonexpansive $n\times n$ matrix $B$ such that $L=\rge(B,I-B)$ and $L^*=\rge(B^T, I-B^T)$.
 \end{proposition}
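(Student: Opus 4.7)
My plan is to realize $F$ as graphically Lipschitzian via the Minty parametrization and then invoke Proposition \ref{PropGraphLipSCD} together with a short differentiation argument to recover the firm nonexpansiveness of the resulting Jacobians. For any two pairs $(x_i,y_i)\in\gph F\cap(X\times Y)$, $i=1,2$, with $u_i:=x_i+y_i$, the local monotonicity inequality gives
\[
\skalp{u_1-u_2,x_1-x_2}=\norm{x_1-x_2}^2+\skalp{y_1-y_2,x_1-x_2}\ge\norm{x_1-x_2}^2,
\]
which, together with Cauchy--Schwarz, shows that $\Phi(x,y):=(x+y,x)$ is injective on $\gph F\cap(X\times Y)$ and that the ``$x$-component'' $x=f(u)$ of its inverse is firmly nonexpansive (hence $1$-Lipschitz) wherever defined.

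The next step is to upgrade this into a function $f$ defined on an \emph{open} neighborhood $U$ of $\bar u:=\xb+\yb$; this is where local \emph{maximality} -- not just local monotonicity -- is decisive and is also the main obstacle in the proof. I would invoke the local Minty surjectivity result for locally maximally monotone operators (cf.\ \cite{Ro85,PolRo96}), which guarantees that the resolvent $(I+F)^{-1}$ has a single-valued Lipschitz localization around $(\bar u,\xb)$. After shrinking to a neighborhood $W\subseteq X\times Y$ of $(\xb,\yb)$, the identity $\Phi(\gph F\cap W)=\gph f$ exhibits $F$ as graphically Lipschitzian of dimension $n$ with transformation mapping $\Phi$, and Proposition \ref{PropGraphLipSCD} immediately yields the \SCD property around $(\xb,\yb)$.

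To extract the promised description of $\Sp F(x,y)$ and $\Sp^*F(x,y)$, I compute
\[
\nabla\Phi(x,y)=\left(\begin{matrix}I & I\\ I & 0\end{matrix}\right),\quad \nabla\Phi(x,y)^{-1}=\left(\begin{matrix}0 & I\\ I & -I\end{matrix}\right),\quad S_n\nabla\Phi(x,y)^TS_n^T=\left(\begin{matrix}0 & -I\\ -I & I\end{matrix}\right),
\]
and plug into \eqref{EqGraphLipSCD1}--\eqref{EqGraphLipSCD2}: for $B\in\overline\nabla f(u)$ with $u=x+y$, I obtain $\nabla\Phi(x,y)^{-1}\myvec{I\\ B}=\myvec{B\\ I-B}$ and, after absorbing a sign into the range, $S_n\nabla\Phi(x,y)^TS_n^T\myvec{I\\ B^T}=-\myvec{B^T\\ I-B^T}$, yielding the claimed formulas $L=\rge(B,I-B)$ and $L^*=\rge(B^T,I-B^T)$. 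Firm nonexpansiveness of every $B\in\overline\nabla f(u)$ is finally checked by substituting $u_1=u+tv$, $u_2=u$ in the defining inequality for $f$ at a point where $f$ is Fr\'echet differentiable and dividing by $t^2$ before letting $t\downarrow 0$; this yields $\skalp{\nabla f(u)v,v}\ge\norm{\nabla f(u)v}^2$ for all $v$, a property which passes to limits of matrices and hence to the whole $B$-subdifferential.
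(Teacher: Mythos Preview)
Your argument follows essentially the same line as the paper's: Minty parametrization via $\Phi(x,y)=(x+y,x)$, identification of the localized resolvent $f$ as firmly nonexpansive, and then the formulas from Proposition~\ref{PropGraphLipSCD} (the paper packages this last part through Corollary~\ref{CorStrongMetrReg} with $h(x)=x$, which amounts to the same matrix computation you carry out explicitly). The one substantive difference is the step you flag as the main obstacle: you appeal to a ``local Minty surjectivity result'' from \cite{Ro85,PolRo96}, but neither reference states such a result directly for locally maximally monotone operators. The paper fills this gap concretely by extending the localized graph $\gph F\cap(X\times Y)$ to a \emph{globally} maximally monotone $\tilde F$ (Zorn), invoking the standard Minty theorem $\rge(I+\tilde F)=\R^n$ from \cite{BauCom11}, and then observing that local maximality forces $\gph F\cap(X\times Y)=\gph\tilde F\cap(X\times Y)$, so the global resolvent $(I+\tilde F)^{-1}$ agrees with $(I+F)^{-1}$ near $\bar u$. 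Your citation should be tightened along these lines; otherwise the proof is correct and your explicit matrix computations and the limiting argument for firm nonexpansiveness are fine.
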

 \begin{proof}
Let $F_{\rm loc}:\R^n\tto\R^n$  be given by $\gph F_{\rm loc}=\gph F\cap (X\times Y)$, where $(X\times Y)$ is as in  Definition \ref{DefLocMon}. Then $F_{\rm loc}$ is monotone, has a maximally monotone extension $\tilde F$, cf. \cite[Proposition 12.6]{RoWe98} and $\gph F\cap (X\times Y)=\gph \tilde F\cap (X\times Y)$. By \cite[Corollary 23.8]{BauCom11}, $\rge(I+\tilde F)=\R^n$ and the resolvent $f:=(I+\tilde F)^{-1}$ is a single-valued, firmly nonexpansive mapping on $\R^n$. Since $f(\xb+\yb)=\xb$,  we can find an open neighborhood $U$ of $\xb+\yb$ such that $f(u)\in X$ and $u-f(u)\in Y$ for all $u\in U$. It follows that $(I+F)^{-1}(u)=(I+\tilde F)^{-1}(u)$ and therefore $I+F$ is strongly metrically regular around $(\xb,\xb+\yb)$. Thus  $F$ is graphically Lipschitzian at $(\xb,\yb)$ and has the \SCD property around $(\xb,\yb)$ by Corollary \ref{CorStrongMetrReg}. Now consider a subspace $L\in\Sp F(x,y)$, where $(x,y)$ is close to $(\xb,\yb)$. By \eqref{EqSpStrMetrReg1} there is a matrix $B\in\overline{\nabla} f(x+y)$ such that $L=\rge(B, I-B)$. Since $f$ is firmly nonexpansive, for every $(u',f(u'))\in \OO_f$ we have $\skalp{\nabla f(u')v,v}\geq \norm{\nabla f(u')v}^2$ and  $\norm{Bv}^2\leq \skalp{Bv,v}$, $v\in\R^n$ follows. This completes the proof.
 \end{proof}
 If $F$ is only locally maximally hypomonotone at $(\xb,\yb)$, it follows that $(1+\gamma) I+F$ is strongly metrically regular around $(\xb,(1+\gamma) \xb+\yb)$ for some $\gamma\geq 0$.
 Hence we obtain the following corollary.
 \begin{corollary}\label{CorHypomonotone}
   Let $F:\R^n\tto\R^n$ be locally maximally hypomonotone at $(\xb,\yb)$. Then there is some $\gamma\geq 0$ such that $F$ is graphically Lipschitzian at $(\xb,\yb)$ of dimension $n$ with transformation mapping $\Phi(x,y)=((1+\gamma)x+y,x)$ and therefore $F$ has the \SCD property  around $(\xb,\yb)$. For every $(x,y)\in\gph F$ sufficiently close to $(\xb,\yb)$  and  every subspace $L\in \Sp F(x,y)$ there is a firmly nonexpansive $n\times n$ matrix $B$ such that $L=\rge(B, I-(1+\gamma)B)$ and $L^*=\rge(B^T, I-(1+\gamma)B^T)$.
 \end{corollary}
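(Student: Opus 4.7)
The strategy is to reduce to the previous proposition by setting $G:=\gamma I+F$.

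By definition of local maximal hypomonotonicity, there exists some $\gamma\ge 0$ such that $G:=\gamma I+F$ is locally maximally monotone at $(\xb,\gamma\xb+\yb)$. Applying the preceding proposition to $G$, we obtain that $G$ is graphically Lipschitzian of dimension $n$ at $(\xb,\gamma\xb+\yb)$ with transformation mapping $(x,y)\mapsto (x+y,x)$; moreover, inspecting that proof, $I+G=(1+\gamma)I+F$ is strongly metrically regular around $(\xb,(1+\gamma)\xb+\yb)$ and the resolvent $((1+\gamma)I+F)^{-1}$ agrees, on an open neighborhood of $(1+\gamma)\xb+\yb$, with a single-valued, firmly nonexpansive mapping defined on all of $\R^n$.

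Next, apply Corollary \ref{CorStrongMetrReg} to $F$ with the smooth perturbation $h(x):=(1+\gamma)x$. Since $F+h=(1+\gamma)I+F$ is strongly metrically regular around $(\xb,\yb+h(\xb))=(\xb,(1+\gamma)\xb+\yb)$, the corollary yields that $F$ is graphically Lipschitzian of dimension $n$ at $(\xb,\yb)$ with transformation mapping
\[
\Phi(x,y)=(y+h(x),x)=((1+\gamma)x+y,x),
\]
and that $F$ has the SCD property around $(\xb,\yb)$. This settles the first two assertions.

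For the description of the subspaces, let $(x,y)\in\gph F$ be sufficiently close to $(\xb,\yb)$ and let $L\in\Sp F(x,y)$. Since $\nabla h(x)=(1+\gamma)I$, formula \eqref{EqSpStrMetrReg1} in Corollary \ref{CorStrongMetrReg} gives
\[
L=\rge\bigl(B,\,I-(1+\gamma)B\bigr)
\]
for some $B\in\overline{\nabla}(F+h)^{-1}(y+h(x))=\overline{\nabla}((1+\gamma)I+F)^{-1}\bigl((1+\gamma)x+y\bigr)$. Because $((1+\gamma)I+F)^{-1}$ coincides locally with a firmly nonexpansive mapping near $(1+\gamma)\xb+\yb$, every element of its B-subdifferential is the limit of Jacobians of a firmly nonexpansive mapping and hence is itself a firmly nonexpansive matrix (the set of firmly nonexpansive matrices is closed, as it is characterized by $\|2B-I\|\le 1$). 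The analogous assertion for $L^*=\rge(B^T,I-(1+\gamma)B^T)$ is obtained from \eqref{EqSpStrMetrReg2}. No genuine obstacle arises; the only point that requires a brief word is the closedness used to transfer firm nonexpansiveness from the Jacobians $\nabla f(u_k)$ at differentiability points to the limit matrix $B$.
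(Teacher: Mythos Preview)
Your proof is correct and follows essentially the same route as the paper: reduce to the preceding proposition via $G=\gamma I+F$, deduce strong metric regularity of $(1+\gamma)I+F$, and then invoke Corollary~\ref{CorStrongMetrReg} with $h(x)=(1+\gamma)x$ to obtain the transformation mapping and the form of the subspaces. Your explicit closedness argument for passing firm nonexpansiveness to the limit matrix $B$ just spells out what the paper leaves implicit.
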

 \begin{corollary}
   A mapping $F: \R^n\tto\R^n$ which is locally maximally hypomonotone on a dense subset of its graph is an \SCD mapping.
 \end{corollary}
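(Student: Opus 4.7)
The plan is to reduce the claim to the density characterization of \SCD mappings given in Lemma \ref{LemSCDproperty}, namely that $F$ is an \SCD mapping if and only if $\OO_F$ is dense in $\gph F$ (equivalently, $\cl\OO_F = \cl\gph F$). Let $D\subseteq\gph F$ denote the dense subset on which $F$ is locally maximally hypomonotone. The strategy is to show $D\subseteq\cl\OO_F$ and then to extend this inclusion to all of $\gph F$ via closure.

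First I would invoke Corollary \ref{CorHypomonotone} at each point $(x,y)\in D$. That corollary gives that $F$ has the \SCD property \emph{around} $(x,y)$, in particular at $(x,y)$ itself. By definition of the \SCD property together with Lemma \ref{LemSCDproperty}, having the \SCD property at $(x,y)$ means $(x,y)\in\cl\OO_F$. Consequently $D\subseteq\cl\OO_F$.

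Next I would pass to the closure. Since $D$ is dense in $\gph F$, we have $\gph F\subseteq\cl D$; combining this with $D\subseteq\cl\OO_F$ and the fact that $\cl\OO_F$ is closed gives
\[
\gph F\;\subseteq\;\cl D\;\subseteq\;\cl\OO_F.
\]
Because $\OO_F\subseteq\gph F$ (and $\gph F$ is locally closed, as assumed throughout the paper), this forces $\cl\OO_F=\cl\gph F$, i.e.\ $\OO_F$ is dense in $\gph F$. Applying Lemma \ref{LemSCDproperty} in the other direction yields that $F$ has the \SCD property at every point of its graph, which is precisely the definition of an \SCD mapping.

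There is no substantive obstacle here: the proof is essentially a two-line diagram chase using (i) Corollary \ref{CorHypomonotone} to upgrade local maximal hypomonotonicity on $D$ to the \SCD property on a neighborhood-basis covering $D$, and (ii) Lemma \ref{LemSCDproperty} to convert density of $\OO_F$ into the global \SCD property. The only mildly delicate point is remembering that Corollary \ref{CorHypomonotone} actually delivers the \SCD property \emph{around} each point of $D$ (not merely at it), so that even if one only knew hypomonotonicity at $(x,y)$, the conclusion $(x,y)\in\cl\OO_F$ is unconditional.
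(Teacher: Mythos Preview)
Your proof is correct and is exactly the argument the paper leaves implicit: the corollary is stated without proof, as an immediate consequence of Corollary~\ref{CorHypomonotone} together with Lemma~\ref{LemSCDproperty}. Your chain $D\subseteq\cl\OO_F$ (via Corollary~\ref{CorHypomonotone}) and $\gph F\subseteq\cl D\subseteq\cl\OO_F$ (via density), hence $\cl\OO_F=\cl\gph F$, is the intended reasoning.
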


 We now consider the subdifferential mapping $\partial q$ of some lsc function $q:\R^n\to\bar\R$.
\begin{definition}\label{DefProxReg}
\begin{enumerate}
\item  A function $q:\R^n\to \bar \R$  is {\em prox-regular} at $\xb\in\dom q$ for $\xba\in\partial q(\xb)$ if $q$ is locally lsc around $\xb$ and there
exist $\epsilon> 0$ and $\rho\geq 0$ such that for all $x',x\in \B_{\epsilon}(\xb)$ with $\vert q(x)-q(\xb)\vert\leq \epsilon$ one has
\[q(x')\geq q(x)+\skalp{x^*,x'-x}-\frac \rho2 \norm{x'-x}^2\quad\mbox{whenever}\quad x^*\in \partial q (x)\cap \B_\epsilon(\xba).\]
When this holds for all $\xba \in \partial q(\xb)$, $q$ is said to be prox-regular at $\xb$.
\item A function $q:\R^n\to \bar \R$ is called {\em subdifferentially continuous} at $\xb\in\dom q$ for $\xba\in\partial q(\xb)$ if for any sequence $(x_k,x_k^*)\longsetto{{\gph \partial q}}(\xb,\xba)$ we have $\lim_{k\to\infty}q(x_k)=q(\xb)$. When this holds for all $\xba \in \partial q(\xb)$, $q$ is said to be subdifferentially continuous  at $\xb$.
    \end{enumerate}
\end{definition}

\begin{proposition}\label{PropProxRegularQ}
      Suppose that $q:\R^n\to\bar\R$ is prox-regular and subdifferentially continuous at $\xb$ for $\xb^*\in\partial q(\xb)$. Then $\partial q$ is locally maximally hypomonotone at $(\xb,\xba)$ and there is some $\lambda>0$ such that $\partial q$ is graphically Lipschitzian at $(\xb,\yb)$ with transformation mapping $\Phi(x,x^*)=(x+\lambda x^*,x)$. Thus  $\partial q$ has the \SCD property around $(\xb,\xba)$.  Further, for every $(x,x^*)\in\gph\partial q$ sufficiently close to $(\xb,\xba)$ one has $\Sp^*\partial q(x,x^*)=\Sp\partial q(x,x^*)$ and  for every $L\in\Sp\partial q(x,x^*)$  there is a symmetric positive semidefinite $n\times n$ matrix $B$ such that $L=L^*=\rge(B,\frac 1\lambda(I- B))$.
\end{proposition}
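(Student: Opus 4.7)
The plan is to reduce everything to properties of the Moreau envelope and proximal mapping of $q$ via the classical Poliquin--Rockafellar theory \cite{PolRo96}, and then invoke the graphically-Lipschitzian machinery of Proposition~\ref{PropGraphLipSCD}. Under prox-regularity and subdifferential continuity of $q$ at $\xb$ for $\xba$, \cite{PolRo96} guarantees that $\partial q$ is locally maximally hypomonotone at $(\xb,\xba)$ and that for every sufficiently small $\lambda>0$ the Moreau envelope
\[e_\lambda q(u):=\inf_x\{q(x)+\tfrac{1}{2\lambda}\norm{x-u}^2\}\]
is of class $C^{1,1}$ on a neighborhood $U$ of $u_0:=\xb+\lambda\xba$, while the associated proximal mapping $P_\lambda:=(I+\lambda\partial q)^{-1}$ admits a Lipschitz continuous single-valued localization on $U$ satisfying $P_\lambda=I-\lambda\nabla e_\lambda q$. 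Fixing such a $\lambda$, the map $\Phi(x,x^*):=(x+\lambda x^*,x)$ is a $C^1$-diffeomorphism sending $\gph\partial q$ locally onto $\gph P_\lambda$, which shows that $\partial q$ is graphically Lipschitzian of dimension $n$ at $(\xb,\xba)$ in the sense of Definition~\ref{DefGraphLip}, and the \SCD property around $(\xb,\xba)$ follows from Proposition~\ref{PropGraphLipSCD}.

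To extract the explicit formulas I would compute
\[\nabla\Phi(x,x^*)=\left(\begin{matrix}I&\lambda I\\ I&0\end{matrix}\right),\quad \nabla\Phi(x,x^*)^{-1}=\left(\begin{matrix}0&I\\ \tfrac1\lambda I&-\tfrac1\lambda I\end{matrix}\right),\quad S_n\nabla\Phi(x,x^*)^TS_n^T=\left(\begin{matrix}0&-\lambda I\\ -I&I\end{matrix}\right),\]
and substitute into \eqref{EqGraphLipSCD1}--\eqref{EqGraphLipSCD2}. With $u:=x+\lambda x^*$ this yields
\begin{align*}
\Sp\partial q(x,x^*)&=\{\rge(B,\tfrac1\lambda(I-B))\mv B\in\overline{\nabla}P_\lambda(u)\},\\
\Sp^*\partial q(x,x^*)&=\{\rge(\lambda B^T,I-B^T)\mv B\in\overline{\nabla}P_\lambda(u)\}.
\end{align*}

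The main remaining step, which I expect to be the principal obstacle, is to prove that every $B\in\overline{\nabla}P_\lambda(u)$ is symmetric and positive semidefinite; once this is done, $B=B^T\succeq 0$ implies $\rge(\lambda B^T,I-B^T)=\rge(B,\tfrac1\lambda(I-B))$ by rescaling the basis, which gives $\Sp^*\partial q(x,x^*)=\Sp\partial q(x,x^*)$ and, for each element $L$, the representation $L=L^*=\rge(B,\tfrac1\lambda(I-B))$ with symmetric PSD $B$ as required. For symmetry, at any $u'\in U$ where $P_\lambda$ is Fr\'echet differentiable the identity $P_\lambda=I-\lambda\nabla e_\lambda q$ forces $e_\lambda q$ to be twice differentiable at $u'$, and since the Hessian of a twice differentiable function is symmetric, $\nabla P_\lambda(u')=I-\lambda\nabla^2 e_\lambda q(u')$ is symmetric; symmetry is then preserved under the limits defining $\overline{\nabla}P_\lambda(u)$. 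For positive semidefiniteness, after a standard localization (for instance, replacing $q$ by $q+\delta_V$ for a small closed neighborhood $V$ of $\xb$, which does not affect $P_\lambda$ near $u_0$), expanding the square gives
\[e_\lambda q(u')-\tfrac1{2\lambda}\norm{u'}^2=\inf_x\{q(x)+\tfrac1{2\lambda}\norm{x}^2-\tfrac1\lambda\skalp{x,u'}\},\]
which is an infimum of affine functions of $u'$ and hence concave; therefore $\nabla^2 e_\lambda q(u')\preceq \tfrac1\lambda I$ wherever it exists, whence $\nabla P_\lambda(u')=I-\lambda\nabla^2 e_\lambda q(u')\succeq 0$, and $B\succeq 0$ for every $B\in\overline{\nabla}P_\lambda(u)$ follows by passing to the limit.
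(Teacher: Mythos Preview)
Your proof follows essentially the same route as the paper: reduce to the Poliquin--Rockafellar theory \cite{PolRo96} to obtain local maximal hypomonotonicity and the graphically Lipschitzian structure via $\Phi(x,x^*)=(x+\lambda x^*,x)$, invoke Proposition~\ref{PropGraphLipSCD} for the explicit description of $\Sp\partial q$ and $\Sp^*\partial q$ in terms of $\overline{\nabla}P_\lambda$, and then verify that every $B\in\overline{\nabla}P_\lambda(u)$ is symmetric positive semidefinite.

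Two remarks on the final step. For positive semidefiniteness, the paper argues via the local monotonicity of the resolvent $(I+\lambda\partial\hat q)^{-1}$ established in \cite[Theorem~4.4]{PolRo96}, whereas you use concavity of $u'\mapsto e_\lambda q(u')-\tfrac1{2\lambda}\norm{u'}^2$; both are valid and yours is arguably more direct. For symmetry, however, your justification ``the Hessian of a twice differentiable function is symmetric'' is not correct as stated: mere differentiability of $\nabla e_\lambda q$ at a point does not force symmetry of the derivative matrix (the standard $\partial_{xy}f\neq\partial_{yx}f$ counterexamples apply). What is true, and what the paper invokes via \cite[Theorem~13.52]{RoWe98}, is that for a $C^{1,1}$ function $g$ every matrix in $\overline{\nabla}(\nabla g)$ is symmetric; since you have already established that $e_\lambda q$ is $C^{1,1}$ near $u_0$, citing this result (or an Alexandrov-type argument exploiting the $C^{1,1}$ structure) closes the gap.
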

\begin{proof}
 Let $\tilde q(x):=q(x)-\skalp{\xba,x}$. Then $\partial \tilde q(\cdot) =\partial q(\cdot)-\xba$ and $\tilde q$ is prox-regular at $\xb$ for $0$.  By the definition of prox-regularity we have
\[\tilde q(x')\geq \tilde q(\xb)-\frac\rho2\norm{x'-\xb}^2\ \forall x'\in \B_\epsilon(\xb)\]
for some $\epsilon>0$ and some $\rho\geq 0$. Hence the function $\hat q:=\tilde q +\delta_{\B_\epsilon}$ fulfills the baseline assumption of \cite[Section 4]{PolRo96}. By \cite[Proposition 4.8]{PolRo96} the subdifferential mapping $\partial \hat q$ is locally maximally hypomonotone around $(\xb,0)$ and, by the proof of \cite[Theorem 4.7]{PolRo96},  for any $\lambda\in(0,\frac1\rho)$ the mapping $\partial \hat q$ is graphically Lipschitzian with transformation mapping $\Phi(x,x^*)=(x+\lambda  x^*,x)$. Moreover, by \cite[Theorem 4.4]{PolRo96},  $(I+\lambda\partial \hat q)^{-1}$ is locally monotone at $(\xb,\xb)$ and  there holds
\begin{equation}\label{EqGradMorEnv}\nabla e_\lambda\hat q(u)=\frac 1\lambda\big(I- (I+\lambda\partial \hat q)^{-1}\big)(u)\end{equation}
for all $u$ sufficiently close to $\xb$, where
\[e_\lambda\hat q(y):=\inf_x \{ \frac1{2\lambda}\norm{x-y}^2+\hat q(x)\}\]
denotes the Moreau envelope of $\hat q$.  Consider a pair $(x,\hat x^*)$ close to $(\xb,0)$ and a subspace $L\in \Sp \partial \hat q(x,\hat x^*)$. According to Corollary \ref{CorHypomonotone}, there is a matrix $B\in \overline{\nabla}( I+\lambda\partial \hat q)^{-1}(x+\lambda\hat x^*)$ with $L=\nabla \Phi(x,\hat x^*)^{-1}\rge(I, B)=\rge( B,\frac 1\lambda(I-B))$, where we have taken into account
\[\nabla \Phi(x,\hat x^*)=\left(\begin{matrix}
  I&\lambda I\\I&0\end{matrix}\right),\ \nabla \Phi(x,\hat x^*)^{-1}=\left(\begin{matrix}
  0& I\\\frac1\lambda I&-\frac1\lambda I\end{matrix}\right).\]
Since $(I+\lambda\partial \hat q)^{-1}$ is locally monotone at $(\xb,\xb)$, it follows that $B$ is positive semidefinite. Further, by \eqref{EqGradMorEnv} we have
$\frac1\lambda(I-B)\in\overline{\nabla}(\nabla e_\lambda\hat q)(x+\lambda \hat x^*)$. By \cite[Theorem 13.52]{RoWe98}, $\overline{\nabla}(\nabla e_\lambda\hat q)(x+\lambda \hat x^*)$ consists of symmetric matrices and consequently $B$ is symmetric. Since $L^\perp=\rge(\frac 1\lambda(I-B),-B)$, we obtain $L^*=L$ and $\Sp^*\partial \hat q(x,\hat x^*)=\Sp\partial \hat q(x,\hat x^*)$ follows. Now, by taking into account that $\partial \tilde q$ and $\partial \hat q$ coincide near $\xb$ and $\partial q$ differs from $\partial \tilde q$ only by the constant $\xba$, it follows that all the shown properties do not hold only for $\partial \hat q$ but also for $\partial q$.
\end{proof}
\begin{corollary}
  For every lsc function $q:\R^n\to\bar\R$ which is prox-regular and subdifferentially continuous at $x$ for $x^*$ on a dense subset of $\gph\partial q$, its subdifferential mapping $\partial q$ is an \SCD mapping.
\end{corollary}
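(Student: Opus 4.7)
The plan is to reduce the statement to the density criterion of Lemma \ref{LemSCDproperty}, which says that $\partial q$ is an SCD mapping if and only if $\cl\OO_{\partial q}=\cl\gph\partial q$. The advantage of this formulation is that it converts a pointwise claim (SCD property at every $(x,x^*)\in\gph\partial q$) into a single topological identity, which is easy to read off from the density hypothesis once one knows the SCD property holds on a dense subset.

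First I would fix a dense subset $D\subseteq\gph\partial q$ of points $(x,x^*)$ at which $q$ is simultaneously prox-regular and subdifferentially continuous. At each such $(x,x^*)\in D$, Proposition \ref{PropProxRegularQ} is applicable and shows that $\partial q$ has the SCD property around $(x,x^*)$; in particular, it has the SCD property at $(x,x^*)$ itself. By the equivalence in Lemma \ref{LemSCDproperty}, this is exactly the statement that $(x,x^*)\in\cl\OO_{\partial q}$, so we obtain the inclusion $D\subseteq\cl\OO_{\partial q}$.

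The rest is a purely topological closure argument. Since $q$ is lsc, $\gph\partial q$ is closed in $\R^{2n}$, so the density of $D$ in $\gph\partial q$ yields $\cl D=\gph\partial q=\cl\gph\partial q$. Combining this with $D\subseteq\cl\OO_{\partial q}$ and the trivial chain $\OO_{\partial q}\subseteq\gph\partial q$ gives
\[\cl\gph\partial q=\cl D\subseteq\cl\OO_{\partial q}\subseteq\cl\gph\partial q,\]
so $\cl\OO_{\partial q}=\cl\gph\partial q$. A final appeal to Lemma \ref{LemSCDproperty} delivers the conclusion that $\partial q$ is an SCD mapping.

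There is no real obstacle: the analytic content -- converting prox-regularity and subdifferential continuity into graphical Lipschitzness of dimension $n$, and hence into graphical smoothness on a (locally) dense subset of the graph -- has already been discharged in Proposition \ref{PropProxRegularQ}. The corollary is essentially a repackaging: a closed set containing a dense subset of a (closed) graph must contain the whole graph, and this is precisely what the density criterion of Lemma \ref{LemSCDproperty} requires.
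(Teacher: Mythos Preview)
Your argument is correct and matches the paper's intended reasoning: the corollary is stated without proof precisely because it follows immediately from Proposition \ref{PropProxRegularQ} (giving the \SCD property at each point of the dense set $D$) together with the density criterion of Lemma \ref{LemSCDproperty}. The only cosmetic point is that you do not actually need the intermediate equality $\cl D=\gph\partial q$; density alone gives $\cl D=\cl\gph\partial q$, which is all Lemma \ref{LemSCDproperty} requires, so the appeal to closedness of $\gph\partial q$ is superfluous.
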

Clearly, every lsc convex function is prox-regular and subdifferentially continuous at all points of its domain. Further, the proof of Proposition \ref{PropProxRegularQ} holds true with $\lambda=1$ and $(I+\partial q)^{-1}$ is firmly nonexpansive and therefore $\norm{B}\leq 1$ $\forall B\in \overline{\nabla}(I+\partial\hat q)^{-1}(\xb)$. Thus we obtain the following corollary.

\begin{corollary}\label{CorSCDConv}
  For every lsc proper convex function $q:\R^n\to\bar\R$ the subdifferential mapping $\partial q$ is  graphically Lipschitzian of dimension $n$ at every point $(x,x^*)$ of its graph. Hence $\partial q$ is an \SCD mapping and for every $(x,x^*)\in \gph\partial q$ and every $L\in \Sp^*\partial q(x,x^*)=\Sp\partial q(x,x^*)$ there is a symmetric positive semidefinite $n\times n$ matrix $B$ with $\norm{B}\leq 1$ such that $L=\rge(B,I-B)$.
\end{corollary}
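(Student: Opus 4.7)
The plan is to deduce this from Proposition \ref{PropProxRegularQ} by exploiting the much stronger structure available in the convex case, which lets us take the parameter $\lambda$ equal to $1$ and which gives a global norm bound on the matrix $B$.

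First I would verify the hypotheses of Proposition \ref{PropProxRegularQ}. For any lsc proper convex $q:\R^n\to\bar\R$, the classical definition of subgradient directly yields
\[q(x')\geq q(x)+\skalp{x^*,x'-x}\quad\mbox{whenever } x^*\in\partial q(x),\]
so $q$ is prox-regular at every $\xb\in\dom q$ for every $\xba\in\partial q(\xb)$ with parameter $\rho=0$ (and any $\epsilon>0$). Subdifferential continuity at every point of $\gph\partial q$ is also standard for lsc convex functions, because the convex subdifferential is closed-graph and $q$ is continuous relative to $\dom\partial q$ on the relevant sequences. Therefore Proposition \ref{PropProxRegularQ} applies at every $(x,x^*)\in\gph\partial q$.

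Next, since $\rho=0$, the restriction $\lambda\in(0,1/\rho)$ in the proof of Proposition \ref{PropProxRegularQ} degenerates and any $\lambda>0$ is admissible; in particular we may take $\lambda=1$. With this choice the transformation mapping becomes $\Phi(x,x^*)=(x+x^*,x)$, and Proposition \ref{PropProxRegularQ} yields that $\partial q$ is graphically Lipschitzian of dimension $n$ at every $(x,x^*)\in\gph\partial q$, hence is an \SCD mapping, and that for every $L\in \Sp^*\partial q(x,x^*)=\Sp\partial q(x,x^*)$ there is a symmetric positive semidefinite $n\times n$ matrix $B$ with $L=\rge(B,I-B)$.

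It remains to sharpen the bound on $B$ to $\norm{B}\leq 1$. Here I use the fact that, by maximal monotonicity of $\partial q$, the Minty resolvent $(I+\partial q)^{-1}$ is defined and single-valued on all of $\R^n$ and is globally firmly nonexpansive. Consequently, at every point $u\in\R^n$ where $(I+\partial q)^{-1}$ is Fr\'echet differentiable, the Jacobian satisfies $\skalp{\nabla (I+\partial q)^{-1}(u)v,v}\geq \norm{\nabla(I+\partial q)^{-1}(u)v}^2$ for all $v$, so each Jacobian, and hence every element $B\in\overline{\nabla}(I+\partial q)^{-1}(x+x^*)$, is firmly nonexpansive; by the criterion $\norm{2B-I}\leq 1$ recalled after the definition of firmly nonexpansive matrices, this gives both positive semidefiniteness and $\norm{B}\leq 1$. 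Tracing $B$ through the identity $L=\nabla\Phi(x,x^*)^{-1}\rge(I,B)=\rge(B,I-B)$ from the proof of Proposition \ref{PropProxRegularQ} (with $\lambda=1$) finishes the claim.

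The only mildly delicate step is justifying that we may set $\lambda=1$ globally rather than working only with $\lambda\in(0,1/\rho)$; this is the point at which convexity (rather than merely prox-regularity) is essential, and it is handled simply by observing that $\rho=0$ is admissible in the convex case and that $(I+\partial q)^{-1}$ is globally firmly nonexpansive, so no neighborhood restrictions are needed.
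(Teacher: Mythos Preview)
Your proposal is correct and follows essentially the same route as the paper: both argue that an lsc proper convex $q$ is prox-regular with $\rho=0$ and subdifferentially continuous, invoke Proposition~\ref{PropProxRegularQ} with $\lambda=1$, and then use firm nonexpansiveness of the resolvent $(I+\partial q)^{-1}$ to obtain $\norm{B}\leq 1$. Your write-up simply fills in a bit more detail than the paper's one-line justification preceding the corollary.
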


\begin{example}\label{ExPoly}
  Let $C\subset\R^n$ be a convex polyhedral set and consider $q=\delta_C$ so that $\partial q= N_C$. By the well-known reduction Lemma, see, e.g., \cite[Lemma 2E.4]{DoRo14}, we have
  $T_{\gph N_C}(x,x^*)=\gph N_{\K_C(x,x^*)}$, where $\K_C(x,x^*):=T_C(x)\cap [x^*]^\perp$ denotes the {\em critical cone} to $C$ at $x$ for $x^*$. Thus $N_C$ is graphically smooth of dimension $n$ at $(x,x^*)$ if and only if $\K_C(x,x^*)$ is a subspace and in this case we have $\widehat\Sp N_C(x,x^*)=K_C(x,x^*)\times K_C(x,x^*)^\perp =\widehat\Sp^* N_C(x,x^*)$. Given $(\xb,\xba)\in\gph N_C$, by \cite[Lemma 4H.2]{DoRo14}, for every sufficiently small neighborhood $W$ of $(\xb,\xba)$, the collection of all critical cones $\K_C(x,x^*)$, $(x,x^*)\in W$ coincides with the collection of all sets of the form $\F_1-\F_2$, where $\F_1,\F_2$ are faces of $\K_C(\xb,\xba)$ with $\F_2\subseteq \F_1$. Since $\F_1-\F_2$ is a subspace if and only if $\F_1=\F_2$ and $\K_C(\xb,\xba)$ has only finitely many faces, we obtain
  \begin{equation}\label{EqSCDNormalCone}\Sp N_C(\xb,\xba)=\Sp^*N_C(\xb,\xba)=\{(\F-\F)\times(\F-\F)^\perp\mv \mbox{$\F$ is face of $\K_C(\xb,\xba)$}\}.\end{equation}
  Of course, for every face $\F$ of $\K_C(\xb,\xba)$ we have
  \[(\F-\F)\times(\F-\F)^\perp=\rge(B,I-B),\]
  where $B$ represents the orthogonal projection on $\F-\F$.

Let us compare the representation \eqref{EqSCDNormalCone} with the limiting coderivative $D^* N_C(\xb,\xba)$.  It was shown in \cite[Proof of Theorem 2]{DoRo96} that
 $N_{\gph N_C}(\xb,\xba)$ is the union of all product sets $K^\circ\times K$ associated with cones $K$ of the form $\F_1-\F_2$, where $\F_1$ and $\F_2$ are closed faces of the critical cone $K_C(\xb,\xba)$ satisfying $\F_2\subset \F_1$. Thus, $\gph D^*N_C(\xb,\xba)$ is the union of all respective sets of the form $(\F_2-\F_1)\times(\F_1-\F_2)^\circ$ and we see that $\Sp^*N_C(\xb,\xba)$ has a simpler structure than the limiting coderivative $D^* N_C(\xb,\xba)$ whenever the critical cone $K_C(\xb,\xba)$ is not a subspace.
\end{example}
Given a function $q:\R^n\to\bar\R$, $\Sp^*\partial q(x,x^*)$ amounts to a generalized derivative of the subgradient mapping and  constitutes therefore some generalized second-order derivative of $q$. In the framework of a study of sufficient conditions for local optimality, Rockafellar \cite{Ro21} has introduced another type of generalized second-order derivative as an epigraphical limit of certain second-order subderivatives.
 \begin{definition}[\cite{Ro21}]
   \begin{enumerate}
     \item A function $\phi:\R^n\to\bar \R$ is called a {\em generalized quadratic form}, if $\phi(0)=0$ and the subgradient mapping $\partial \phi$ is {\em generalized linear}, i.e., $\gph \partial \phi$ is a subspace of $\R^n\times\R^n$.
     \item A function $q:\R^n\to\bar \R$ is called {\em generalized twice differentiable} at $x$ for a subgradient $x^*\in\partial q(x)$, if it is twice epi-differentiable at $x$ for $x^*$ with the second-order subderivative ${\rm d}^2q(x, x^*)$ being a generalized quadratic form.
     \item Given a function $q:\R^n\to\bar \R$ and a pair $(x,x^*)\in\gph \partial q$, the {\em quadratic bundle} of $q$ at $x$ for $x^*$ is defined by
     \[{\rm quad\,}q(x,x^*):=\left[\ \begin{minipage}{10cm}the collection of generalized quadratic forms $\phi$ for which $\exists (x_k,x_k^*)\to(x,x^*)$ with $q$
             generalized twice differentiable at $x_k$ for $x_k^*$ and such that the generalized quadratic forms $\phi_k={\rm d}^2q(x_k, x_k^*)$ converge epigraphically to $\phi$.
     \end{minipage}\right.\]
   \end{enumerate}
 \end{definition}
 We establish now a strong relationship between $\Sp\partial q$ and ${\rm quad\,}q$ for prox-regular and subdifferentially continuous functions $q$. We start with the following lemma.
 \begin{lemma}\label{LemGenTwDiff}Let $q:\R^n\to\bar \R$ be prox-regular and subdifferentially continuous at $x$ for $x^*\in\partial q(x)$. Then $(x,x^*)\in\OO_{\partial q}$ if and only if $q$ is generalized twice differentiable at $x$ for $x^*$, and in this case one has
   \begin{equation}\label{EqSCDTwEpi}\widehat \Sp\partial q(x,x^*)=\{\gph \partial\big(\frac 12{\rm d^2}q(x, x^*)\big)\}.\end{equation}
 \end{lemma}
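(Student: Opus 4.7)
My plan is to invoke the Poliquin--Rockafellar correspondence (\cite[Theorem 13.40]{RoWe98}) between proto-differentiability of $\partial q$ at $(x,x^*)$ and twice epi-differentiability of $q$ at $x$ for $x^*$; valid under prox-regularity and subdifferential continuity, this equivalence yields, when these properties hold, the identity
\[\gph D\partial q(x,x^*)=\gph \partial\big(\tfrac 12 {\rm d}^2 q(x,x^*)\big).\]
By Proposition~\ref{PropProxRegularQ} and Remark~\ref{RemProtoDiff}, $\partial q$ is graphically Lipschitzian of dimension $n$ near $(x,x^*)$ via $\Phi(z,z^*)=(z+\lambda z^*,z)$ for some $\lambda>0$, and is automatically proto-differentiable at every point of $\OO_{\partial q}$ nearby. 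The lemma will follow by transferring the subspace/dimension condition through this correspondence in both directions.

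For ``$\Rightarrow$'', assume $(x,x^*)\in\OO_{\partial q}$. Then $T_{\gph\partial q}(x,x^*)\in\Z_n$ and $\partial q$ is proto-differentiable at $(x,x^*)$, so the cited equivalence yields twice epi-differentiability of $q$ at $x$ for $x^*$ together with $\gph\partial(\tfrac 12 {\rm d}^2 q(x,x^*))=T_{\gph\partial q}(x,x^*)\in\Z_n$. Thus $\partial(\tfrac 12 {\rm d}^2 q(x,x^*))$ is generalized linear. Prox-regularity delivers ${\rm d}^2 q(x,x^*)(0)=0$: from $q(x+tw')\geq q(x)+t\skalp{x^*,w'}-\tfrac{\rho}{2}t^2\norm{w'}^2$ one gets $\Delta_t^2 q(x,x^*)(w')\geq -\rho\norm{w'}^2$, so $\liminf_{t\downarrow 0,\,w'\to 0}\geq 0$, while $\Delta_t^2 q(x,x^*)(0)=0$ provides the reverse estimate. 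Hence ${\rm d}^2 q(x,x^*)$ is a generalized quadratic form, $q$ is generalized twice differentiable at $x$ for $x^*$, and \eqref{EqSCDTwEpi} follows.

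For ``$\Leftarrow$'', assume $q$ is generalized twice differentiable at $x$ for $x^*$. Then $L:=\gph\partial(\tfrac 12 {\rm d}^2 q(x,x^*))$ is a subspace of $\R^{2n}$, and the correspondence above gives proto-differentiability of $\partial q$ at $(x,x^*)$ with $T_{\gph\partial q}(x,x^*)=L$. It remains to check $L\in\Z_n$, i.e., $\dim L=n$. Let $f:U\to\R^n$, with Lipschitz constant $K$, be the function arising from the graphically Lipschitzian structure and set $u_0:=x+\lambda x^*$. Then $T_{\gph f}(u_0,f(u_0))=\nabla\Phi(x,x^*)L$ is again a subspace. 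It is contained in the cone $\{(w,v)\mv \norm{v}\leq K\norm{w}\}$ (take $(w,v)\in T_{\gph f}(u_0,f(u_0))$, represented by $t_k\downarrow 0$, $(w_k,v_k)\to (w,v)$ with $f(u_0+t_kw_k)=f(u_0)+t_kv_k$, and pass $\norm{v_k}\leq K\norm{w_k}$ to the limit), so $\pi_1$ restricted to this subspace is injective; it is also surjective onto $\R^n$, since for each $w\in\R^n$ the bounded sequence $v_k:=k(f(u_0+w/k)-f(u_0))$ has a cluster point $v$ with $(w,v)\in T_{\gph f}(u_0,f(u_0))$. Hence $\pi_1$ is a linear isomorphism, $\dim T_{\gph f}(u_0,f(u_0))=n$, and consequently $\dim L=n$, establishing $(x,x^*)\in\OO_{\partial q}$.

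The main obstacle is precisely this dimension argument in the reverse direction: twice epi-differentiability alone yields a tangent subspace of a priori unspecified dimension, and only the graphically Lipschitzian structure supplied by Proposition~\ref{PropProxRegularQ} pins this dimension down to $n$ and allows one to conclude $L\in\Z_n$.
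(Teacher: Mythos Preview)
Your proof is correct and follows essentially the same route as the paper: both directions rest on \cite[Theorem~13.40]{RoWe98} together with the graphically Lipschitzian structure from Proposition~\ref{PropProxRegularQ} and Remark~\ref{RemProtoDiff}. You simply flesh out two points the paper leaves implicit---the verification that ${\rm d}^2q(x,x^*)(0)=0$ and the dimension count via $\pi_1$ on $T_{\gph f}(u_0,f(u_0))$---but the strategy is identical.
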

 \begin{proof}
 By \cite[Theorem 13.40]{RoWe98}, the subgradient mapping $\partial q$ is proto-differentiable at $(x,x^*)$ if and only if $q$ is twice epi-differentiable at $x$ for $x^*$, and then
  \[D(\partial q)(x,x^*)=\partial\big(\frac 12{\rm d^2}q(x, x^*)\big).\]
  By Proposition \ref{PropProxRegularQ}, $\partial q$ is graphically Lipschitzian of order $n$ at $(x,x^*)$. Hence, if $(x,x^*)\in\OO_{\partial q}$ then $\partial q$ is proto-differentiable at $(x,x^*)$ by Remark \ref{RemProtoDiff} and consequently $q$ is twice epi-differentiable at $x$ for $x^*$ and $\gph \partial\big(\frac 12{\rm d^2}q(x, x^*)\big)=\gph D(\partial q)(x,x^*)=T_{\gph \partial q}(x,x^*)$ is a subspace. This verifies that $q$ is generalized twice differentiable at $x$ for $x^*$. Conversely, if $q$ is generalized twice differentiable at $x$ for $x^*$, then $T_{\gph \partial q}(x,x^*)=\gph \partial\big(\frac 12{\rm d^2}q(x, x^*)\big)$ is a subspace.  The dimension of this subspace must be $n$ because $\partial q$ is graphically Lipschitzian of dimension $n$, and $(x,x^*)\in\OO_{\partial q}$ follows.
\end{proof}
We will also make use of the following variant of Attouch's theorem, see, e.g., \cite[Theorem 12.35]{RoWe98}, which states the connection between graphical convergence of
subdifferential mappings and epi-convergence of the functions themselves, when the functions are convex.
\begin{lemma}\label{LemAttouch}
  Let $\phi_k:\R^n\to\bar\R$, $k\in\N$ and $\phi:\R^n\to\bar\R$ be proper lsc functions and assume that there is some $\rho\geq 0$ such that the functions $\hat\phi_k:=\phi_k+\rho\norm{\cdot}^2$ are convex. Then the following two statements are equivalent.
  \begin{enumerate}
    \item[(i)]The functions $\phi_k$ epi-converge to $\phi$.
    \item[(ii)]The mappings $\partial \phi_k$ converge graphically to $\partial\phi$, $\hat\phi:=\phi+\rho\norm{\cdot}^2$ is convex and there is some sequence $(x_k,x_k^*)$ converging to some $(\xb,\xba)\in\gph\partial \phi$ such that $(x_k,x_k^*)\in\gph\partial\phi_k$ $\forall k$ and $\lim_{k\to\infty}\phi_k(x_k)=\phi(\xb)$.
  \end{enumerate}
\end{lemma}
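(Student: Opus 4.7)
The plan is to reduce the claim to the classical Attouch theorem for proper lsc convex functions (e.g.\ \cite[Theorem 12.35]{RoWe98}) by absorbing the common quadratic perturbation. Set $\hat\phi:=\phi+\rho\norm{\cdot}^2$; by hypothesis each $\hat\phi_k$ is proper lsc convex. First I would record two purely formal translations: (a) because $\rho\norm{\cdot}^2$ is continuous, $\phi_k$ epi-converges to $\phi$ if and only if $\hat\phi_k$ epi-converges to $\hat\phi$; (b) by the sum rule for limiting subdifferentials with a $C^1$ summand one has $\partial\hat\phi_k(x)=\partial\phi_k(x)+2\rho x$ and $\partial\hat\phi(x)=\partial\phi(x)+2\rho x$, so the affine homeomorphism $\Psi:(x,x^*)\mapsto(x,x^*+2\rho x)$ of $\R^n\times\R^n$ identifies $\gph\partial\phi_k$ with $\gph\partial\hat\phi_k$ and $\gph\partial\phi$ with $\gph\partial\hat\phi$. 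In particular, $\partial\phi_k\to\partial\phi$ graphically if and only if $\partial\hat\phi_k\to\partial\hat\phi$ graphically.

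For the direction (i)$\Rightarrow$(ii) I would start from $\hat\phi_k\to\hat\phi$ epi, supplied by (a). Since an epi-limit of proper lsc convex functions is again proper lsc convex (properness being inherited from $\phi$ being proper), $\hat\phi$ is convex, which disposes of the middle conjunct in (ii). Attouch's theorem then delivers graphical convergence $\partial\hat\phi_k\to\partial\hat\phi$ together with an anchor $(u_k,u_k^*)\to(\bar u,\bar u^*)\in\gph\partial\hat\phi$ satisfying $u_k^*\in\partial\hat\phi_k(u_k)$ and $\hat\phi_k(u_k)\to\hat\phi(\bar u)$. Pushing these data back through $\Psi^{-1}$ yields graphical convergence $\partial\phi_k\to\partial\phi$ and an anchor $(u_k,u_k^*-2\rho u_k)\to(\bar u,\bar u^*-2\rho\bar u)$ in the respective graphs, while $\phi_k(u_k)=\hat\phi_k(u_k)-\rho\norm{u_k}^2\to\hat\phi(\bar u)-\rho\norm{\bar u}^2=\phi(\bar u)$ gives the required value convergence.

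For (ii)$\Rightarrow$(i) convexity of $\hat\phi$ is now part of the hypothesis, and the same homeomorphism $\Psi$ translates the graphical convergence and the anchor sequence from the $\phi_k$-picture to the $\hat\phi_k$-picture, with $\hat\phi_k(x_k)=\phi_k(x_k)+\rho\norm{x_k}^2\to\phi(\bar x)+\rho\norm{\bar x}^2=\hat\phi(\bar x)$. Since both $\hat\phi_k$ and $\hat\phi$ are proper lsc convex, Attouch's theorem in the other direction gives $\hat\phi_k\to\hat\phi$ epi, and hence $\phi_k\to\phi$ epi via (a).

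The argument is essentially a change of variables. The only point requiring care is that in (i)$\Rightarrow$(ii) one must conclude convexity of the epi-limit $\hat\phi$ before invoking Attouch — this is automatic here but also explains why in the reverse direction convexity of $\hat\phi$ has to be stated as a separate hypothesis, since graphical convergence of subdifferentials alone, without an ambient convex structure, does not force convexity of the limit function.
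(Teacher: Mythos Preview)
Your proposal is correct and follows essentially the same route as the paper: both reduce the statement to the classical Attouch theorem \cite[Theorem~12.35]{RoWe98} by passing from $\phi_k,\phi$ to the convex functions $\hat\phi_k,\hat\phi$, after checking that epi-convergence, graphical convergence of the subdifferentials, and the anchor-sequence condition each transfer through the shift $(x,x^*)\mapsto(x,x^*+2\rho x)$. The paper writes out the $\Limsup/\Liminf$ identities for the graphs explicitly where you invoke the homeomorphism $\Psi$, and it cites \cite[Exercise~7.8, Theorem~7.17]{RoWe98} for the epi-convergence and convexity-preservation facts you state in words, but the argument is the same.
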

\begin{proof}Observe that assertion (i) is equivalent to epi-convergence of $\hat\phi_k\to\hat\phi$ by \cite[Exercise 7.8]{RoWe98}, and in this case the epigraphical limit $\hat\phi$ is convex, cf. \cite[Theorem 7.17]{RoWe98}. Similarly, $\partial\phi_k$ converges graphically to $\partial\phi$ if and only if $\partial\hat\phi_k$ converges graphically to $\hat\phi$. Indeed, since $\partial \hat\phi_k(x)=\partial\phi_k(x)+2\rho x$, we easily obtain
\begin{align*}&\Limsup_{k\to\infty}\gph\hat\phi_k=\{(x,x^*+2\rho x)\mv (x,x^*)\in \Limsup_{k\to\infty}\gph\phi_k\},\\
 &\Liminf_{k\to\infty}\gph\hat\phi_k=\{(x,x^*+2\rho x)\mv (x,x^*)\in \Liminf_{k\to\infty}\gph\phi_k\}
 \end{align*}
 and the claim follows. Finally we have $\phi(\xb)=\lim_{k\to\infty} \phi_k(x_k)$ for some sequence $(x_k,x_k^*)\to(\xb,\xba)\in \gph\partial \phi$ with $(x_k,x_k^*)\in\gph\partial \phi_k$, if and only if $\hat\phi(\xb)=\lim_{k\to\infty} \hat\phi_k(x_k)$ and $\gph\partial \hat\phi_k\ni(x_k,x_k^*+2\rho x_k)\to(\xb,\xba+2\rho\xb)\in \gph\partial \hat\phi$. Now the equivalence  between (i) and (ii) follows from Attouch's theorem \cite[Theorem 12.35]{RoWe98} applied to the convex functions $\hat\phi_k$ and $\hat\phi$.
\end{proof}
\begin{proposition}Suppose that $q:\R^n\to\bar\R$ is prox-regular at $\xb$ for $\xb^*\in\partial q(\xb)$. Further assume that for all $(x,x^*)\in\gph \partial q$ sufficiently close to $(\xb,\xba)$ the function $q$ is subdifferentially continuous at $x$ for $x^*$. Then
\[\Sp^*\partial q(\xb,\xba)=\Sp\partial q(\xb,\xba)=\{\gph \partial(\frac 12 \phi)\mv \phi\in {\rm quad\,}q(\xb,\xba)\}.\]
\end{proposition}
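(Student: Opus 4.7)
Proposition \ref{PropProxRegularQ} already contains the first equality, so the work is to prove the second one. The plan is to pair Lemma \ref{LemGenTwDiff}, which identifies $\OO_{\partial q}$ (near $(\xb,\xba)$) with the points at which $q$ is generalized twice differentiable and gives $\widehat \Sp \partial q(x,x^*) = \{\gph \partial(\frac 12 {\rm d}^2 q(x,x^*))\}$, with Lemma \ref{LemAttouch}, which bridges $\Z_n$-convergence of the subspaces $L_k := \gph \partial(\frac 12 \phi_k)$ and epi-convergence of the generalized quadratic forms $\phi_k := {\rm d}^2 q(x_k, x_k^*)$. The convexity hypothesis of Lemma \ref{LemAttouch} is supplied for free by Proposition \ref{PropProxRegularQ}: local maximal hypomonotonicity of $\partial q$ with some constant $\gamma \geq 0$ is inherited by the graphical derivative $D\partial q(x_k, x_k^*) = \partial(\frac 12 \phi_k)$ at each $(x_k, x_k^*) \in \OO_{\partial q}$, so each $\phi_k + \gamma\|\cdot\|^2$ is convex.

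For the inclusion $\supseteq$, let $\phi \in {\rm quad\,} q(\xb,\xba)$ with witnessing sequences $(x_k, x_k^*) \to (\xb, \xba)$, $q$ generalized twice differentiable at $x_k$ for $x_k^*$, and $\phi_k \to \phi$ epi-graphically. Lemma \ref{LemGenTwDiff} places $(x_k, x_k^*) \in \OO_{\partial q}$ and $L_k \in \widehat \Sp \partial q(x_k, x_k^*) \subseteq \Z_n$. Combining convexity of $\phi_k + \gamma\|\cdot\|^2$ with epi-convergence, the implication (i)$\Rightarrow$(ii) of Lemma \ref{LemAttouch} yields graphical convergence $\partial \phi_k \to \partial \phi$, equivalently $L_k \to \gph \partial(\frac 12 \phi)$ in Painlev\'e--Kuratowski sense. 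The limit is an $n$-dimensional subspace (it is the image of $\gph \partial(\frac 12\phi + \frac\gamma 2 \|\cdot\|^2)$ under an invertible linear transformation, and the subdifferential graph of a convex generalized quadratic form is Lagrangian, hence $n$-dimensional), so by Lemma \ref{LemCompact}(v) one has $L_k \to \gph \partial(\frac 12 \phi)$ in $\Z_n$, and this limit belongs to $\Sp \partial q(\xb, \xba)$.

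For the inclusion $\subseteq$, let $L \in \Sp \partial q(\xb, \xba)$ be produced as the $\Z_n$-limit of $L_k := \gph D\partial q(x_k, x_k^*)$ along $(x_k, x_k^*) \longsetto{\OO_{\partial q}}(\xb, \xba)$. Lemma \ref{LemGenTwDiff} writes $L_k = \gph \partial(\frac 12 \phi_k)$ with $\phi_k := {\rm d}^2 q(x_k, x_k^*)$ generalized quadratic, and the representation $L = \rge(B, \frac 1\lambda(I - B))$ with $B$ symmetric positive semidefinite supplied by Proposition \ref{PropProxRegularQ} permits an explicit construction of a generalized quadratic form $\psi$ with $L = \gph \partial \psi$: diagonalize $B$ and, in its eigenbasis, put a quadratic term $\frac{1-b_i}{2\lambda b_i} w_i^2$ wherever $b_i \in (0,1)$, the zero function where $b_i = 1$, and the indicator $\delta_{\{0\}}$ where $b_i = 0$. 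Setting $\phi := 2\psi$ gives $L = \gph \partial(\frac 12 \phi)$. Both $\phi_k + \gamma\|\cdot\|^2$ (by hypomonotonicity) and $\phi + \gamma\|\cdot\|^2$ (since $\partial(\frac 12\phi) + \gamma I$ is the graphical limit of the monotone generalized linear relations $\partial(\frac 12\phi_k) + \gamma I$) are convex. The implication (ii)$\Rightarrow$(i) of Lemma \ref{LemAttouch}, fed with graphical convergence $\partial(\frac 12\phi_k) \to \partial(\frac 12\phi)$ (Lemma \ref{LemCompact}(v)) and the constant base-point witness $(0,0) \in \gph \partial \phi_k$ with $\phi_k(0) = 0 = \phi(0)$, then yields $\phi_k \to \phi$ epi-graphically, whence $\phi \in {\rm quad\,} q(\xb, \xba)$.

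The principal obstacle is the explicit identification of a generalized quadratic form $\psi$ from a prescribed self-adjoint $L = L^* \in \Z_n$ as the graph of its subdifferential; this is the one step that leaves the dictionary furnished by Lemmas \ref{LemGenTwDiff} and \ref{LemAttouch} and calls for concrete linear algebra, which remains tractable thanks to the explicit form of $B$ supplied by Proposition \ref{PropProxRegularQ}. A subsidiary technicality is the convexity of $\phi + \gamma\|\cdot\|^2$ needed in direction $\subseteq$, which rests on the preservation of (maximal) monotonicity under graphical limits.
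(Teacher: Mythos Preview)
Your proof follows essentially the same route as the paper's: both directions hinge on the triad Lemma \ref{LemGenTwDiff} (identifying $\OO_{\partial q}$ with generalized twice differentiability), Lemma \ref{LemAttouch} (epi-convergence $\Leftrightarrow$ graphical convergence under a uniform convexity shift), and Lemma \ref{LemCompact}(v) (Painlev\'e--Kuratowski $\Leftrightarrow$ $\Z_n$ convergence), and the reverse inclusion is closed by explicitly manufacturing a generalized quadratic form $\phi$ with $\gph\partial(\frac12\phi)=L$. The paper builds $\phi$ via the Moore--Penrose inverse, $\phi(x)=\frac1\lambda\skalp{(B^\dag-BB^\dag)x,x}+\delta_{\rge B}(x)$, while you diagonalize $B$; these are the same object in different coordinates.

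Two small points deserve tightening. First, your eigenvalue case split covers $b_i\in(0,1)$, $b_i=1$, $b_i=0$, but Proposition \ref{PropProxRegularQ} gives only that $B$ is symmetric positive semidefinite, not that $\norm{B}\le 1$ (that bound holds in the convex case, Corollary \ref{CorSCDConv}, not in the general prox-regular case); your quadratic coefficient $\tfrac{1-b_i}{2\lambda b_i}$ is perfectly valid for all $b_i>0$, so simply replace ``$b_i\in(0,1)$'' by ``$b_i>0$''. Second, invoking Lemma \ref{LemGenTwDiff} at the approximating points $(x_k,x_k^*)$ requires $q$ to be prox-regular and subdifferentially continuous \emph{there}; subdifferential continuity is assumed, but the paper spends its opening paragraph checking that prox-regularity with the same modulus $\rho$ propagates to nearby points of $\gph\partial q$---you use this implicitly (it also underlies the uniform $\gamma$ in ``$\phi_k+\gamma\norm{\cdot}^2$ convex'') and should state it.
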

\begin{proof}
  Let $\epsilon$ and $\rho\geq 0$ be as in Definition \ref{DefProxReg}. Then we can find an open neighborhood $W$ of $(\xb,\xba)$ such that $W\subset\B_{\frac\epsilon2}(\xb)\times\B_{\frac \epsilon2}(\xba)$ and for all $(x,x^*)\in \gph\partial q\cap W$ the function  $q$ is subdifferentially continuous at $x$ for $x^*$ and  $\vert q(x)-q(\xb)\vert<\frac\epsilon2$.
  Consider $(\hat x,\hat x^*)\in\gph\partial q\cap W$. Then for all $x,x'\in\B_{\frac \epsilon2}(\hat x)$ with $\vert q(x)-q(\hat x)\vert\leq\frac\epsilon2$ and all $x^*\in \partial q(x)\cap \B_{\frac\epsilon2}(\hat x^*)$ we have $x,x'\in \B_\epsilon(\xb)$, $\vert q(x)-q(\xb)\vert\leq \epsilon$ and $x^*\in\partial q(x)\cap\B_\epsilon(\xba)$ implying
  $q(x')\geq q(x)+\skalp{x^*,x'-x}-\frac\rho2\norm{x'-x}^2$. Thus $q$ is prox-regular at $\hat x$ for $\hat x^*$ and therefore graphically Lipschitzian of dimension $n$ at $(\hat x,\hat x^*)$. Further, by \cite[Proposition 13.49]{RoWe98} and its proof, we may conclude that ${\rm d^2}q(\hat x,\hat x^*)+\rho\norm{\cdot}^2$ is a lsc convex function.\\
  Now consider $\phi\in{\rm quad\,}q(\xb,\xba)$ together with some sequence $(x_k,x_k^*)\to(\xb,\xba)$ such that $q$ is generalized twice differentiable at $x_k$ for $x_k^*$ and the generalized quadratic forms $\phi_k:={\rm d}^2q(x_k, x_k^*)$ converge epigraphically to $\phi$. We may assume that $(x_k,x_k^*)\in\gph \partial q\cap W$ $\forall k$ and thus $(x_k,x_k^*)\in\OO_{\partial q}\cap W$ by Lemma \ref{LemGenTwDiff}. By Lemma \ref{LemAttouch}, $\partial \phi_k$ converges graphically to $\partial\phi$, i.e., $\gph \partial\phi_k\to\gph\partial\phi$ in the sense of Painlev\'e-Kuratowski convergence. Now it follows from Lemma \ref{LemCompact}(v) and \eqref{EqSCDTwEpi} that $\gph \partial(\frac 12 \phi)\in \Sp\partial q(\xb,\xba)$.\\
  Conversely, consider a subspace $L\in\Sp\partial q(\xb,\xba)$ together with sequences $(x_k,x_k^*)\in\OO_{\partial q}\cap W$ and $L_k\in\widehat\Sp\partial q(x_k,x_k^*)$ with $\lim_{k\to\infty}d_\Z(L_k,L)=0$. According to Proposition \ref{PropProxRegularQ} we can find symmetric positive semidefinte matrices $B_k$, $B$ such that $L_k=\rge(B_k,\frac 1\lambda(I-B_k))$, $L=\rge(B,\frac 1\lambda(I-B))$ with $\lambda=1/(\rho+1)$. Now let $\U:=\rge B$ and set $Q:=\frac1\lambda(B^\dag-BB^\dag)$, where $B^\dag$ denotes the Moore-Penrose inverse of $B$. Since $B$ is symmetric and positive semidefinite, so is $B^\dag$ as well. Further, $BB^\dag B=B$ and $BB^\dag=B^\dag B$ is the orthogonal projection onto $\rge B$, so that $I-B^\dag B$ is the orthogonal projection onto $\U^\perp=\ker B$. Now consider the generalized quadratic form $\phi(x):=\skalp{Qx,x}+\delta_\U(x)$. Since $Q$ is symmetric, we obtain
  \[\partial (\frac 12\phi)(x)=\begin{cases}Qx+\U^\perp=\frac1\lambda(B^\dag-BB^\dag)x+\U^\perp&\mbox{if $x\in \U$,}\\\emptyset&\mbox{else.}\end{cases}\]
  Thus
  \begin{align*}\gph \partial(\frac 12\phi)&=\Big\{\big(Bp,\frac1\lambda(B^\dag-BB^\dag)Bp+(I-B^\dag B)v\big)\mv p,v\in\R^n\Big\}\\
  &=\Big\{\Big(B\big(B^\dag B p+(I-B^\dag B)v\big),\frac 1\lambda(I-B)\big(B^\dag Bp+(I-B^\dag B)v\big)\Big)\mv p,v\in\R^n\Big\}\\
  &=\rge(B,\frac1\lambda(I-B))=L.\end{align*}
  The matrix $Q+\frac1\lambda I=\frac 1\lambda B^\dag+\frac1\lambda(I-BB^\dag)$ is positive semidefinite as the sum of two positive semidefinite matrices and therefore the function $\hat\phi:=\phi+\frac1\lambda\norm{\cdot}^2=\phi+(\rho+1)\norm{\cdot}^2$ is convex. For each $k$, the function $\phi_k:={\rm d}^2q(x_k,x_k^*)$ fulfills $\gph \partial(\frac 12q)=L_k$ by Lemma \ref{LemGenTwDiff} and $\hat \phi_k:=\phi_k+(\rho+1)\norm{\cdot}^2$ is convex. Since $(0,0)\in L_k=\gph\partial (\frac 12\phi_k)$, we  have $0\in \partial \phi_k(0)$. Further, $\phi_k(0)=\phi(0)=0$ and $0\in \partial \phi(0)$. Since convergence of $L_k$ to $L$ implies that $\partial \phi_k$ converges graphically to $\partial \phi$, it follows from Lemma \ref{LemAttouch}  that $\phi_k$ converges epigraphically to $\phi$ and we conclude $\phi\in {\rm quad\,}q(\xb,\xba)$. Thus $L\in\{\gph\partial(\frac12 \phi)\mv \phi\in{\rm quad\,}q(\xb,\xba)\}$ verifying $\Sp\partial q(\xb,\xba)=\{\gph\partial(\frac12 \phi)\mv \phi\in{\rm quad\,}q(\xb,\xba)\}$. By Proposition \ref{PropProxRegularQ} we have $\Sp^*\partial q(\xb,\xba)=\Sp\partial q(\xb,\xba)$ and the proof is complete.
\end{proof}
\begin{corollary}For every lsc proper convex function $q:\R^n\to\bar\R$  and for every pair $(x,x^*)\in \gph\partial q$ we have
  \[\Sp^*\partial q(x,x^*)=\Sp\partial q(x,x^*)=\{\gph \partial(\frac 12 \phi)\mv \phi\in {\rm quad\,}q(x,x^*)\}.\]
\end{corollary}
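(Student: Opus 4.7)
The plan is to obtain this corollary as an immediate specialization of the preceding proposition. For that I only need to verify, at every point $(\xb,\xba)\in\gph\partial q$, the two hypotheses of that proposition: that $q$ is prox-regular at $\xb$ for $\xba$, and that $q$ is subdifferentially continuous at $x$ for $x^*$ for all $(x,x^*)\in\gph\partial q$ near $(\xb,\xba)$. Once both are in hand, the preceding proposition yields the asserted chain of equalities without further argument.

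First I would verify prox-regularity. For any lsc proper convex $q$ the subgradient inequality
$q(x')\geq q(x)+\skalp{x^*,x'-x}$
holds unconditionally for all $x,x'\in\R^n$ and all $x^*\in\partial q(x)$. Hence the defining inequality in Definition \ref{DefProxReg} is satisfied at every $\xb\in\dom q$ and every $\xba\in\partial q(\xb)$ with $\rho=0$ and arbitrary $\epsilon>0$.

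Second I would invoke the classical fact that every lsc proper convex function is subdifferentially continuous at every point of its domain for every subgradient (cf.\ \cite[Example 13.30]{RoWe98}): whenever $(x_k,x_k^*)\longsetto{\gph\partial q}(\xb,\xba)$ one has $q(x_k)\to q(\xb)$. This is a standard consequence of lower semicontinuity of $q$ together with the global subgradient inequality, which provides the matching upper bound on $\limsup_k q(x_k)$ via any fixed test point and the convergence $x_k^*\to\xba$.

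With both conditions verified at every $(\xb,\xba)\in\gph\partial q$, the preceding proposition applies pointwise and delivers
$\Sp^*\partial q(\xb,\xba)=\Sp\partial q(\xb,\xba)=\{\gph\partial(\tfrac12\phi)\mv\phi\in{\rm quad\,}q(\xb,\xba)\}$
exactly as claimed. There is no genuine obstacle in this corollary; its entire content reduces to observing that the two regularity assumptions of the preceding proposition are automatic for convex lsc proper functions, so the identity holds globally on $\gph\partial q$ rather than only near a reference pair.
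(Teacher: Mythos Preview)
Your proposal is correct and coincides with the paper's approach: the corollary is immediate from the preceding proposition once one notes that every lsc proper convex function is prox-regular (with $\rho=0$, by the subgradient inequality) and subdifferentially continuous at every point of $\gph\partial q$. The paper does not spell out a separate proof here, having already observed prior to Corollary~\ref{CorSCDConv} that these two properties are automatic in the convex case.
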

\section{\SCD regularity}
In this section we present the definition and basic properties of a certain property called \SCD regularity, which has various applications as we will demonstrate in the subsequent sections.
\begin{definition}\begin{enumerate}
\item We denote by $\Z_n^{\rm reg}$ the collection of all subspaces $L\in\Z_n$ such that
  \begin{equation}\label{EqSCDReg_L}
    (y^*,0)\in L\ \Rightarrow\ y^*=0.
  \end{equation}
  \item  A mapping $F:\R^n\tto\R^n$ is called {\em \SCD regular around} $(x,y)\in\gph F$, if $F$ has the \SCD property around $(x,y)$ and
  \begin{equation}\label{EqSCDReg}
    (y^*,0)\in \bigcup \Sp^*F(x,y) \Rightarrow\ y^*=0,
  \end{equation}
  i.e., $L\in \Z_n^{\rm reg}$ for all $L\in \Sp^*F(x,y)$. Further, we will denote by
  \[{\rm scd\,reg\;}F(x,y):=\sup\{\norm{y^*}\mv (y^*,x^*)\in \bigcup \Sp^*F(x,y), \norm{x^*}\leq 1\}\]
  the {\em modulus of \SCD regularity} of $F$ around $(x,y)$.
\end{enumerate}
\end{definition}
In the following proposition we state some basic properties of subspaces $L\in\Z_n^{\rm reg}$.
\begin{proposition}\label{PropC_L}
    Given a $2n\times n$ matrix $Z$, there holds $\rge Z\in \Z_n^{\rm reg}$ if and only if the $n\times n$ matrix $\pi_2(Z)$ is nonsingular. Thus,
    for every $L\in \Z_n^{\rm reg}$   there is a  unique $n\times n$ matrix $C_L$  such that $L=\rge(C_L,I)$. Further, $L^*=\rge(C_L^T,I)\in\Z_n^{\rm reg}$,
    \begin{equation}\label{EqC_L}\skalp{x^*,C_L^Tv}=\skalp{y^*,v}\ \forall (y^*,x^*)\in L\ \forall v\in\R^n.
    \end{equation}
    and
    \begin{equation}\label{EqKappa_L}
    \norm{y^*}\leq \norm{C_L}\norm{x^*}\ \forall (y^*,x^*)\in L.
  \end{equation}
\end{proposition}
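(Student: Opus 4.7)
The plan is to reduce everything to a convenient basis of $L$ obtained by writing any $Z\in\M(L)$ as $Z=(A,B)$ with $A,B$ being $n\times n$ matrices, and then exploiting the identification of $\pi_2$ with the ``second'' block.

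First I would tackle the equivalence. Writing $\rge Z=\{(Ap,Bp)\mv p\in\R^n\}$, the condition $(y^*,0)\in\rge Z$ means $y^*=Ap$ and $Bp=0$ for some $p$. If $B=\pi_2(Z)$ is nonsingular, then necessarily $p=0$ and hence $y^*=0$, so $\rge Z\in\Z_n^{\rm reg}$. Conversely, if $B$ is singular, choose $p\ne 0$ with $Bp=0$; since $Z$ has full column rank $n$ we must have $Ap\ne 0$, and $(Ap,0)\in\rge Z$ witnesses $\rge Z\not\in\Z_n^{\rm reg}$.

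Given $L\in\Z_n^{\rm reg}$, pick any $Z=(A,B)\in\M(L)$; by what was just shown $B$ is invertible, so changing basis via $B^{-1}$ gives $(A,B)B^{-1}=(AB^{-1},I)$, which is still a basis of $L$. Hence $L=\rge(C_L,I)$ with $C_L:=AB^{-1}$. For uniqueness, any matrix $C'$ with $L=\rge(C',I)$ must satisfy $(C'e_i,e_i)\in L=\rge(C_L,I)$ for each standard basis vector, and comparing the second block forces the parameter to be $e_i$ and thus $C'e_i=C_L e_i$ for all $i$, so $C'=C_L$.

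Next I would compute $L^*$ explicitly from the normalized basis. The orthogonality condition $\skalp{u^*,C_L p}+\skalp{v^*,p}=0$ for all $p$ is equivalent to $v^*=-C_L^T u^*$, whence $L^\perp=\rge(I,-C_L^T)$. Substituting into the definition \eqref{EqDualSubspace} yields $L^*=\{(C_L^T u^*,u^*)\mv u^*\in\R^n\}=\rge(C_L^T,I)$. By the first part applied to the basis $(C_L^T,I)$ (whose second block is $I$), we conclude $L^*\in\Z_n^{\rm reg}$.

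Finally, the remaining two claims fall out immediately from the parametrization $L=\rge(C_L,I)$: any $(y^*,x^*)\in L$ is of the form $(C_Lx^*,x^*)$ (taking $p=x^*$), so $\skalp{x^*,C_L^T v}=\skalp{C_L x^*,v}=\skalp{y^*,v}$ which gives \eqref{EqC_L}, and the operator-norm estimate $\norm{y^*}=\norm{C_L x^*}\le\norm{C_L}\norm{x^*}$ gives \eqref{EqKappa_L}. No step is particularly hard; the only point requiring a touch of care is the basis-change argument that replaces an arbitrary $(A,B)\in\M(L)$ with the normalized representative $(C_L,I)$, since this implicitly uses Lemma~\ref{LemCompact} style reasoning that multiplying a basis by a nonsingular $n\times n$ matrix yields another basis of the same subspace.
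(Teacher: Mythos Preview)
Your proof is correct and follows essentially the same approach as the paper's: establish the equivalence via the full-rank property of $Z$, normalize to the representative $(C_L,I)$ by right-multiplying with $\pi_2(Z)^{-1}$, then read off $L^\perp=\rge(I,-C_L^T)$ and the remaining identities. The only minor differences are cosmetic: the paper derives \eqref{EqC_L} from the orthogonality relation $(p,-C_L^Tp)\in L^\perp$ rather than from the direct parametrization $(y^*,x^*)=(C_Lx^*,x^*)$, and it asserts uniqueness of $C_L$ without your explicit standard-basis check. Your closing remark about Lemma~\ref{LemCompact} is a slight misattribution---the basis-change fact you use is pure linear algebra and already recorded earlier in the paper's discussion of $\M(L)$.
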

\begin{proof}
Clearly, if $\pi_2(Z)$ is nonsingular then $\rge Z\in \Z_n$. Further, given $(y^*,0)\in\rge Z$, there is some $p$ with $y^*=\pi_1(Z)p$, $0=\pi_2(Z)p$ implying $p=y^*=0$ and therefore $\rge Z\in\Z_n^{\rm reg}$. Conversely, consider $L
\in\Z_n^{\rm reg}$ and $Z\in \M(L)$. Because $L\in \Z_n$, the matrix $Z$ has full column rank $n$ and therefore there cannot exist $p\not=0$ with $Zp=0$.  Thus, if  $A:=\pi_2(Z)$ were singular, there is some $0\not=p\in\R^n$ with $Ap=0$ and $Bp\not=0$ with $B:=\pi_1(Z)$, implying $(Bp,0)\in L$ and $Bp\not=0$ which is not possible because of $L\in\Z_n^{\rm reg}$. This proves that $A$ is nonsingular and $L=\rge ZA^{-1}= \rge (C_L,I)$ with $C_L=BA^{-1}$ follows.
 Clearly, $C_L$ is uniquely given by $L$ and does not depend on the particular choice of $A$ and $B$.
From $L=\rge(C_L,I)$ we deduce $L^\perp=\rge(I,-C_L^T)$ and $L^*=S_nL^\perp=\rge(C_L^T,I)$. Further, for every $p\in\R^n$ we have $(p,-C_L^Tp)\in L^\perp$, implying
\[\skalp{p,y^*}-\skalp{C_L^Tp,x^*}=\skalp{p,y^*-C_Lx^*}=0\ \forall (y^*,x^*)\in L\]
and \eqref{EqC_L} follows. Finally, for every $(y^*,x^*)\in L$ there is some $p\in\R^n$ with $y^*=C_Lp$, $x^*=p$ implying \eqref{EqKappa_L}.
\end{proof}
\begin{remark}Note that for every $L\in\Z_n^{\rm reg}$  there holds $C_L=\pi_1(Z)\pi_2(Z)^{-1}$ for every $Z\in \M(L)$.
\end{remark}
In case of \SCD regularity we obtain from Proposition \ref{PropC_L} that
\[\bigcup\Sp^*F(x,y)=\{(C_Lp,p)\mv L\in \Sp^*F(x,y), p\in\R^n\}\]
and consequently
\begin{align}
  \nonumber{\rm scd\,reg\;}F(x,y)&=\sup\{\norm{C_Lp}\mv L\in \Sp^*F(x,y), p\in\R^n,\norm{p}\leq 1\}\\
  \label{EqSCDRegMod}&=\sup\{\norm{C_L}\mv L\in \Sp^*F(x,y)\}.
\end{align}
\begin{remark}\label{RemSCDRegSingleValued}
  In case of a single-valued, locally Lipschitzian mapping $F:\R^n\to\R^n$, by virtue of Lemma \ref{LemSCDSingleValued}, \SCD regularity of $F$ around $\xb$ means that all matrices  belonging to the B-subdifferential are nonsingular. This is exactly the so-called BD-regularity property from \cite{Qi93}.
\end{remark}
By isometry of the mapping $L\mapsto L^*$ and Proposition \ref{PropC_L} we obtain readily the following lemma.
\begin{lemma}\label{LemSCCRegPrimal}
  The mapping $F:\R^n\tto\R^n$ is \SCD regular around $(\xb,\yb)\in\gph F$ if and only if
  \begin{equation}\label{EqSCDReg1}
    (u,0)\in \bigcup\Sp F(x,y)\ \Rightarrow\ u=0.
  \end{equation}
  Further,
  \[{\rm scd\,reg\;}F(x,y)=\sup\{\norm{u}\mv (u,v)\in \bigcup\Sp F(x,y),\ \norm{v}\leq 1\}=\sup\{\norm{C_L}\mv L\in \Sp F(x,y)\}.\]
\end{lemma}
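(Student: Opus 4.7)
The plan is to deduce this lemma from the two facts already in hand: the symmetry $\Sp F(x,y)=\{L^*\mid L\in \Sp^*F(x,y)\}$ furnished by Remark \ref{RemSymmetry}, and the structural description $L=\rge(C_L,I)$, $L^*=\rge(C_L^T,I)$ from Proposition \ref{PropC_L}. Both statements of the lemma are simply the ``primal'' shadow of the dual definitions, and no further limiting or topological argument is needed.

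For the qualitative part, first observe that Proposition \ref{PropC_L} yields the equivalence $L\in \Z_n^{\rm reg}\iff L^*\in \Z_n^{\rm reg}$: if $L=\rge(C_L,I)\in\Z_n^{\rm reg}$, then $L^*=\rge(C_L^T,I)$ is itself in $\Z_n^{\rm reg}$, and applying the involution $(\cdot)^*$ once more returns $L$. Combining this with Remark \ref{RemSymmetry}, the defining condition \eqref{EqSCDReg} on every $L\in\Sp^*F(x,y)$ translates, via $L\mapsto L^*$, into the statement that every $L'\in\Sp F(x,y)$ lies in $\Z_n^{\rm reg}$, which is exactly \eqref{EqSCDReg1}. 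So the first equivalence is obtained.

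For the quantitative part, recall that in \eqref{EqSCDRegMod} it was already established that
\[{\rm scd\,reg\;}F(x,y)=\sup\{\norm{C_L}\mid L\in \Sp^*F(x,y)\}.\]
Now, for any $L\in\Sp^*F(x,y)\subseteq\Z_n^{\rm reg}$, its dual $L^*\in\Sp F(x,y)$ satisfies $L^*=\rge(C_L^T,I)$, so $C_{L^*}=C_L^T$ and consequently $\norm{C_{L^*}}=\norm{C_L^T}=\norm{C_L}$. As $L$ ranges over $\Sp^*F(x,y)$, the subspace $L^*$ ranges over $\Sp F(x,y)$; thus
\[\sup\{\norm{C_L}\mid L\in \Sp^*F(x,y)\}=\sup\{\norm{C_{L'}}\mid L'\in \Sp F(x,y)\}.\]
Finally, for a fixed $L'\in\Sp F(x,y)$, the elements of $L'=\rge(C_{L'},I)$ are precisely the pairs $(C_{L'}p,p)$, $p\in\R^n$, so
\[\sup\{\norm{u}\mid (u,v)\in L',\ \norm{v}\leq 1\}=\sup_{\norm{p}\leq 1}\norm{C_{L'}p}=\norm{C_{L'}},\]
and taking the supremum over $L'\in\Sp F(x,y)$ delivers the remaining equality.

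I do not anticipate any obstacle: the involution $L\mapsto L^*$ is an isometry of $\Z_n$ that preserves $\Z_n^{\rm reg}$ and satisfies $C_{L^*}=C_L^T$, so the lemma is essentially a restatement of the definition under this involution. The only small care needed is that the equivalence of \eqref{EqSCDReg1} with $\Sp F(x,y)\subseteq\Z_n^{\rm reg}$ is valid when $F$ has the \SCD property (i.e. $\Sp F(x,y)\neq\emptyset$), which is implicit in the very definition of \SCD regularity.
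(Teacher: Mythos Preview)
Your proof is correct and follows essentially the same approach as the paper, which dispatches the lemma in a single line by invoking the isometry $L\mapsto L^*$ and Proposition~\ref{PropC_L}. You have simply made explicit the details the paper leaves implicit: the preservation of $\Z_n^{\rm reg}$ under the involution, the relation $C_{L^*}=C_L^T$ (hence $\norm{C_{L^*}}=\norm{C_L}$), and the bijection between $\Sp F(x,y)$ and $\Sp^*F(x,y)$ from Remark~\ref{RemSymmetry}. Your closing remark about the \SCD\ property being implicit in the equivalence is also apt.
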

 Note that \SCD regularity is weaker than the metric regularity of $F$ around $(x,y)$. Indeed,
condition \eqref{EqMoCrit} for metric regularity of $F$ near $(x,y)$ can be equivalently written as
\[(y^*,0)\in \gph D^*F(x,y)\ \Rightarrow\ y^*=0\]
and  $\bigcup\Sp^*F(x,y)$ is contained in $\gph D^*F(x,y)$ by Lemma \ref{LemSCDname}.
The next example shows that \SCD regularity is even strictly weaker than metric regularity, see also Example \ref{ExSubspReg} below.
\begin{example}Consider the \SCD mapping
\[ F(x):=-x+ N_{R_-}(x) =\partial q(x)\ \mbox{ with } q(x)=-\frac 12 x^2+\delta_{R_-}(x)\]
at $(0,0)$. Then
\[D^*F(0,0)(y^*)=-y^*+\begin{cases}\{0\}&\mbox{if $y^*<0$}\\
\R&\mbox{if $y^*=0$}\\
\R_+&\mbox{if $y^*>0$}\end{cases}\]
and therefore the only subspaces contained in $\gph D^*F(0,0)$ are $\{(y^*,-y^*)\mv y^*\in\R\}$ and $\{0\}\times\R$. Hence $F$ is \SCD regular at $(0,0)$, but $F$ is not metrically regular near $(0,0)$ because of $0\in D^*F(0,0)(1)$.
\end{example}
\begin{lemma}
  Let $F:\R^n\tto\R^n$ be  \SCD regular around $(x,y)\in \gph F$. Then ${\rm scd\,reg\;}F(x,y)<\infty$.
\end{lemma}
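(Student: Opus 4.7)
The plan is to argue by compactness: I will show that $\Sp^*F(x,y)$ is a closed (hence compact) subset of the compact metric space $\Z_n$, that it is contained in $\Z_n^{\rm reg}$ by the assumption of \SCD regularity, and finally that the assignment $L\mapsto \norm{C_L}$ is continuous on $\Z_n^{\rm reg}$. Combining these facts together with formula \eqref{EqSCDRegMod} gives the conclusion.

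First I would verify that $\Sp^*F(x,y)$ is closed in $\Z_n$. This is essentially built into the definition: $\Sp^*F(x,y)$ is an outer limit in the sense of Painlev\'e--Kuratowski, and outer limits are always closed. A short diagonal argument makes this explicit: given $L_k\in \Sp^*F(x,y)$ with $L_k\to L$, for each $k$ one picks $(x_k',y_k')\in\OO_F$ with $\norm{(x_k',y_k')-(x,y)}\leq 1/k$ and $L_k'\in\widehat\Sp^*F(x_k',y_k')$ with $d_\Z(L_k',L_k)\leq 1/k$; then $L_k'\to L$ along $(x_k',y_k')\longsetto{\OO_F}(x,y)$, so $L\in\Sp^*F(x,y)$. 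Since $\Z_n$ is compact by Lemma~\ref{LemCompact}(iv), $\Sp^*F(x,y)$ is compact as well.

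Next, I would prove that $L\mapsto C_L$ is continuous on $\Z_n^{\rm reg}$. Given $L_k\longsetto{\Z_n}L$ with $L\in\Z_n^{\rm reg}$, Lemma~\ref{LemCompact}(ii) provides basis matrices $Z_k\in\M(L_k)$ converging to some $Z\in\M(L)$. Writing $Z_k=(A_k,B_k)$ and $Z=(A,B)$, Proposition~\ref{PropC_L} tells us that $L\in\Z_n^{\rm reg}$ is equivalent to $B=\pi_2(Z)$ being nonsingular. Since nonsingularity is an open condition, $B_k$ is nonsingular for all $k$ large enough, so $L_k\in\Z_n^{\rm reg}$ and $C_{L_k}=A_kB_k^{-1}\to AB^{-1}=C_L$, yielding $\norm{C_{L_k}}\to\norm{C_L}$.

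Now I can finish: by the \SCD regularity hypothesis, $\Sp^*F(x,y)\subseteq\Z_n^{\rm reg}$. Since $\Sp^*F(x,y)$ is compact and $L\mapsto\norm{C_L}$ is continuous on $\Z_n^{\rm reg}$, the supremum
\[\sup\{\norm{C_L}\mv L\in\Sp^*F(x,y)\}\]
is attained and therefore finite. By \eqref{EqSCDRegMod} this supremum equals ${\rm scd\,reg\,}F(x,y)$, which establishes the claim. The only substantive step is the continuity of $L\mapsto C_L$, since the closedness of $\Sp^*F(x,y)$ and the compactness of $\Z_n$ are already at hand; this continuity is where I expect the actual technical content to sit, and it is handled by the basis-matrix argument above.
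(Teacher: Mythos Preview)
Your argument is correct, but it takes a different route from the paper. The paper proceeds by direct contradiction: assuming ${\rm scd\,reg\;}F(x,y)=\infty$, one picks $L_k\in\Sp^*F(x,y)$ and $(y_k^*,x_k^*)\in L_k$ with $\norm{y_k^*}\geq k$, $\norm{x_k^*}\leq 1$, normalizes by $\norm{y_k^*}$, and passes to a subsequence so that $L_k\to L$ in the compact space $\Z_n$ and $(y_k^*,x_k^*)/\norm{y_k^*}\to(y^*,0)$ with $\norm{y^*}=1$; then $(y^*,0)\in L$ and $L\in\Sp^*F(x,y)$ by Lemma~\ref{LemLimSupSp}, contradicting \SCD regularity. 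Both proofs rest on the closedness of $\Sp^*F(x,y)$ in $\Z_n$ (the paper invokes Lemma~\ref{LemLimSupSp}, you reprove it by a diagonal argument), but you replace the normalization step by the structural fact that $L\mapsto C_L$ is continuous on $\Z_n^{\rm reg}$ and then apply the extreme value theorem via \eqref{EqSCDRegMod}. Your route is slightly longer but yields a reusable byproduct (continuity of $L\mapsto C_L$, hence openness of $\Z_n^{\rm reg}$ in $\Z_n$), while the paper's argument is more direct and needs no reference to $C_L$ at all.
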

\begin{proof}
 Assume on the contrary that there are  sequences  $L_k\in\Sp^*F(x,y)$ and $(y_k^*,x_k^*)\in L_k$ such that $\norm{y_k^*}\geq k$ and $\norm{x_k^*}\leq1$. By possibly passing to some subsequence we can assume that $y_k^*/\norm{y_k^*}$ converges to some $y^*$ with $\norm{y^*}=1$ and $L_k$ converges in the compact metric space $\Z_n$ to some $L$. Then $(y^*,0)=\lim_{k\to\infty}(y_k^*,x_k^*)/\norm{y_k^*}\in L$ and $L\in\Sp^*F(x,y)$ by Lemma \ref{LemLimSupSp} contradicting the assumption of \SCD regularity.
\end{proof}
\begin{proposition}\label{PropSCDReg}
  Assume that $F:\R^n\tto\R^n$ is \SCD regular around $(\xb,\yb)\in\gph F$. Then $F$ is \SCD regular around every $(x,y)\in\gph F$ sufficiently close to $(\xb,\yb)$ and
  \[\limsup_{(x,y)\longsetto{\gph F}(\xb,\yb)}{\rm scd\,reg\;}F(x,y)\leq{\rm scd\,reg\;}F(\xb,\yb).\]
\end{proposition}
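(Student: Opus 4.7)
The plan is to argue by contradiction, combining the compactness of $\Z_n$ (Lemma \ref{LemCompact}(iv)) with the closedness formula $\Sp^*F(\xb,\yb)=\Limsup_{(x,y)\longsetto{\gph F}(\xb,\yb)}\Sp^*F(x,y)$ from Lemma \ref{LemLimSupSp}. Set $\kappa:=\scdreg F(\xb,\yb)$, which is finite by the preceding lemma. If the claim fails, we can extract a sequence $(x_k,y_k)\longsetto{\gph F}(\xb,\yb)$, subspaces $L_k\in\Sp^*F(x_k,y_k)$, and elements $(y_k^*,x_k^*)\in L_k$ with $\norm{x_k^*}\le 1$ such that either (A) $\norm{y_k^*}\to\infty$ (this covers the case that $F$ is not SCD regular around $(x_k,y_k)$, since SCD property around $(\xb,\yb)$ is automatically inherited by all nearby points, so the only remaining source of failure is the implication \eqref{EqSCDReg}), or (B) there is some fixed $\epsilon>0$ with $\norm{y_k^*}\ge \kappa+\epsilon$ (this covers the failure of the limsup estimate).

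In both cases, by compactness of $\Z_n$ I pass to a subsequence along which $L_k\to L$ in $\Z_n$. Then Lemma \ref{LemLimSupSp} yields $L\in\Sp^*F(\xb,\yb)$, and by Lemma \ref{LemCompact}(v) the convergence $L_k\to L$ is equivalent to Painlev\'e--Kuratowski convergence, which is the tool I use to pass the inclusions $(y_k^*,x_k^*)\in L_k$ to the limit.

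In case (A), I normalize by dividing by $\norm{y_k^*}$, obtaining $\big(y_k^*/\norm{y_k^*},x_k^*/\norm{y_k^*}\big)\in L_k$; extracting a further subsequence gives $y_k^*/\norm{y_k^*}\to y^*$ with $\norm{y^*}=1$ while $x_k^*/\norm{y_k^*}\to 0$. Painlev\'e--Kuratowski convergence delivers $(y^*,0)\in L\subseteq\bigcup\Sp^*F(\xb,\yb)$ with $y^*\ne 0$, contradicting the SCD regularity of $F$ around $(\xb,\yb)$. In case (B), the sequence $(y_k^*,x_k^*)$ is bounded, so along a subsequence it converges to some $(y^*,x^*)$ with $\norm{x^*}\le 1$ and $\norm{y^*}\ge\kappa+\epsilon$; Painlev\'e--Kuratowski convergence again gives $(y^*,x^*)\in L\in\Sp^*F(\xb,\yb)$, so by the definition of $\scdreg$ we obtain $\kappa\ge\norm{y^*}\ge\kappa+\epsilon$, a contradiction.

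The only subtle point is the separation into the two cases and the observation that SCD property is automatically inherited around $(x_k,y_k)$ for large $k$ (so that failure of SCD regularity at $(x_k,y_k)$ must come from failure of \eqref{EqSCDReg}); once this is in place, both contradictions reduce to the standard compactness--closedness pattern already developed for $\Sp^*F$.
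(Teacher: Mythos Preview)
Your argument is correct and follows essentially the same route as the paper: contradiction, compactness of $\Z_n$, and Lemma~\ref{LemLimSupSp} to place the limiting subspace back in $\Sp^*F(\xb,\yb)$. The paper, however, avoids your case split (A)/(B) by a single uniform step: for any $\kappa>\scdreg F(\xb,\yb)$ it extracts $(y_k^*,x_k^*)\in L_k$ with $\norm{y_k^*}\ge\kappa$, $\norm{x_k^*}\le 1$, normalizes by $\norm{y_k^*}$, passes to a limit $(y^*,x^*)\in L$ with $\norm{y^*}=1$, $\norm{x^*}\le 1/\kappa$, and then rescales by $\kappa$ (using that $L$ is a subspace) to produce $(\kappa y^*,\kappa x^*)\in L$ with $\norm{\kappa x^*}\le 1$ and $\norm{\kappa y^*}=\kappa$, contradicting the definition of $\scdreg F(\xb,\yb)$. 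This handles the ``regularity fails nearby'' and ``limsup too large'' scenarios in one stroke.

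One small imprecision in your write-up: in case~(B) you assert ``the sequence $(y_k^*,x_k^*)$ is bounded'', but failure of the limsup estimate only gives $\norm{y_k^*}\ge\kappa+\epsilon$, not an upper bound. You should either pass to a further subsequence (if unbounded, you are back in case~(A)) or, more cleanly, adopt the paper's uniform normalization so no boundedness assumption is needed.
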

\begin{proof}
  By contraposition. If any of the assertions does not hold, we can find some $\kappa>{\rm scd\,reg\;}F(\xb,\yb)$ and sequences $(x_k,y_k)\longsetto{{\gph F}}(\xb,\yb)$, $L_k\in\Sp^*F(x_k,y_k)$ and $(y_k^*,x_k^*)\in L_k$ with $\norm{y_k^*}\geq\kappa$ and $\norm{x_k^*}\leq 1$. By possibly passing to some subsequence we can assume that $(y_k^*,x_k^*)/\norm{y_k^*}$ converges to some $(y^*,x^*)$ and $L_k$ converges to some $L$. Then $\norm{y^*}=1$, $\norm{x^*}\leq \frac 1{\kappa}$, $(y^*,x^*)\in L$ and $L\in\Sp^*F(\xb,\yb)$ by Lemma \ref{LemLimSupSp}. Since $L$ is a subspace, we also have $(\kappa y^*,\kappa x^*)\in L\subseteq\bigcup\Sp^*F(\xb,\yb)$ implying together with $\norm{\kappa x^*}\leq 1$ the contradiction ${\rm scd\,reg\;}F(\xb,\yb)\geq \norm{\kappa y^*}=\kappa$.
\end{proof}
\section{On semismooth* Newton methods for \SCD mappings\label{SecSemiSm}}
Consider the inclusion
\begin{equation}\label{EqIncl}
  0\in F(x),
\end{equation}
where $F:\R^n\tto\R^n$. Assume that $\xb$ is a reference solution of \eqref{EqIncl}.
The idea behind the \ssstar Newton method \cite{GfrOut21} for solving \eqref{EqIncl} is as follows. If $F$ is \ssstar at $(\xb,0)$ and we are given some point $(x,y)\in\gph F$ close to $(\xb,0)$, then for every $(y^*,x^*)\in\gph D^*F(x,y)$ there holds
\[\skalp{x^*,x-\xb}=\skalp{y^*,y-0}+\oo(\norm{(x,y)-(\xb,0)}\norm{(x^*,y^*)})\]
by the definition of the semismoothness* property. We choose now $n$ pairs $(y_i^*,x_i^*)\in \gph D^*F(x,y)$, $i=1,\ldots, n$, compute a solution $\Delta x$ of the system
\begin{equation}\label{EqBasSystem}\skalp{x_i^*,\Delta x}=-\skalp{y_i^*,y},\ i=1,\ldots,n\end{equation}
and expect that $\norm{(x+\Delta x)-\xb}=\oo(\norm{(x,y)-(\xb,0)}$. When dealing with \SCD mappings $F$ we can simplify this procedure by choosing the pairs $(y_i^*,x_i^*)$ as a basis of some subspace $L\in\Sp^*F(x,y)$, which allows us to weaken the notion of semismoothness* along the lines of Proposition \ref{PropCharSemiSmooth}.

\begin{definition}\label{DefSCDssstar}
We say that $F:\R^n\tto\R^n$ is {\em\SCD \ssstar at} $(\xb,\yb)\in\gph F$ if $F$ has the \SCD property around $(\xb,\yb)$ and
for every $\epsilon>0$ there is some $\delta>0$ such that
\begin{align}\label{EqDefSSCSemiSmooth}
\vert \skalp{x^*,x-\xb}-\skalp{y^*,y-\yb}\vert&\leq \epsilon
\norm{(x,y)-(\xb,\yb)}\norm{(x^*,y^*)}
\end{align}
holds for all $(x,y)\in \gph F\cap \B_\delta(\xb,\yb)$ and all $(y^*,x^*)\in\bigcup\Sp^*F(x,y)$.\\
We say that $F:\R^n\tto\R^n$ is {\em \SCD \ssstar around} $(\xb,\yb)\in\gph F$ if there is some neighborhood $W$ of $(\xb,\yb)$ such that $F$ is \SCD \ssstar at every $(x,y)\in\gph F\cap W$.
\end{definition}

The chosen subspace $L\in \Sp^*F(x,y)$ should have the property that the resulting system \eqref{EqBasSystem} has a unique solution. Taking the pairs $(y_i^*,x_i^*)$, $i=1,\ldots,n$, as columns of a $2n\times n$ matrix $Z$, this yields the requirement that $\pi_2(Z)$ is nonsingular, which in turn is equivalent to $L\in\Z_n^{\rm reg}$ by Proposition \ref{PropC_L}.
Comparing \eqref{EqC_L} with \eqref{EqBasSystem}, we see that $\Delta x=-C_L^Ty$ is a solution to \eqref{EqBasSystem} in this case.

We are now in the position to describe the iteration step of the \SCD variant of the \ssstar Newton method introduced in \cite{GfrOut21}. Assume we are given some iterate $x^{(k)}$. Since we cannot expect in general that $F(x^{(k)})\not=\emptyset$ or that $0$ is close to
$F(x^{(k)})$, even if $x^{(k)}$ is close to a solution $\xb$,  we first perform  some  preparatory step which
yields $(\hat x^{(k)},\hat y^{(k)})\in\gph F$ as  an approximate projection of $(x^{(k)},0)$ onto $\gph F$. We require that
\begin{equation}\label{EqBndApprStep}
\norm{(\hat x^{(k)},\hat y^{(k)})-(\xb,0)}\leq \eta\norm{x^{(k)}-\xb}
\end{equation}
for some constant $\eta>0$. E.g., if
\[\norm{(\hat x^{(k)},\hat y^{(k)})-(x^{(k)},0)}\leq \beta\dist{(x^{(k)},0),\gph F}\]
holds with some $\beta\geq 1$, then
\begin{align*}\norm{(\hat x^{(k)},\hat y^{(k)})-(\xb,0)}&\leq \norm{(\hat x^{(k)},\hat y^{(k)})-(x^{(k)},0)}+\norm{(x^{(k)},0)-(\xb,0)}\\
&\leq  \beta\dist{(x^{(k)},0),\gph F}+\norm{(x^{(k)},0)-(\xb,0)}\leq (\beta+1)\norm{(x^{(k)},0)-(\xb,0)}
\end{align*}
and \eqref{EqBndApprStep} holds with $\eta=\beta+1$. Further we require that $\Sp^*F(\hat x^{(k)},\hat y^{(k)})\cap\Z_n^{\rm reg}\not=\emptyset$ and  compute the new iterate as $x^{(k+1)}=\hat x^{(k)}-C_L^T\hat y^{(k)}$ for some $L\in \Sp^*F(\hat x^{(k)},\hat y^{(k)})\cap\Z_n^{\rm reg}$. In fact, in a numerical implementation we will not calculate the matrix $C_L$, but two $n\times n$ matrices $A,B$ such that $L=\rge(B^T,A^T)$, compute $\Delta x^{(k)}$ as a solution of the system $A\Delta x=-B\hat y^{(k)}$ and set $x^{(k+1)}=\hat x^{(k)}+\Delta x^{(k)}$.

This leads to the following conceptual algorithm.
\begin{algorithm}[\SCD \ssstar Newton-type method for inclusions]\label{AlgNewton}\mbox{ }\\
 1. Choose a starting point $x^{(0)}$, set the iteration counter $k:=0$.\\
 2. If ~ $0\in F(x^{(k)})$, stop the algorithm.\\
  3. \begin{minipage}[t]{\myAlgBox} {\bf Approximation step: } Compute
  $$(\hat x^{(k)},\hat y^{(k)})\in\gph F$$ satisfying \eqref{EqBndApprStep} such that $\Sp^*F(\hat x^{(k)},\hat y^{(k)})\cap\Z_n^{\rm reg}\not=\emptyset$.\end{minipage}\\
  4. \begin{minipage}[t]{\myAlgBox} {\bf Newton step: }Select $n\times n$ matrices $A^{(k)},B^{(k)}$ with $L^{(k)}:=\rge\big({B^{(k)}}^T,{A^{(k)}}^T)\in \Sp^*F(\hat x^{(k)},\hat y^{(k)})\cap\Z_n^{\rm reg}$, calculate the Newton direction $\Delta x^{(k)}$ as a solution of the linear system $A^{(k)}\Delta x=-B^{(k)}\hat y^{(k)}$ and obtain the new iterate via $x^{(k+1)}=\hat x^{(k)}+\Delta x^{(k)}.$\end{minipage}\\
  5. Set $k:=k+1$ and go to 2.
\end{algorithm}
  We have $\Delta x^{(k)}=-C_{L^{(k)}}^T\hat y^{(k)}$ and therefore $(\Delta x^{(k)},-\hat y^{(k)})\in -{L^{(k)}}^*={L^{(k)}}^* \in \Sp F(\hat x^{(k)},\hat y^{(k)})$ by Proposition \ref{PropC_L}. Thus, alternatively we can perform the Newton step also in the following way:\\
  4. \begin{minipage}[t]{\myAlgBox} {\bf Newton step: }Select $n\times n$ matrices $A^{(k)},B^{(k)}$ with $\rge\big({B^{(k)}},{A^{(k)}})\in \Sp F(\hat x^{(k)},\hat y^{(k)})\cap\Z_n^{\rm reg}$, compute a solution $p$ of the linear system ${A^{(k)}}p =-\hat y^{(k)}$  and compute the new iterate $x^{(k+1)}=\hat x^{(k)}+\Delta x^{(k)}$ with Newton direction $\Delta x^{(k)}=B^{(k)}p$.\end{minipage}\\
\begin{remark}  Note that  $-\hat y^{(k)}\in D^\sharp F(\hat x^{(k)},\hat y^{(k)})(\Delta x^{(k)})$ but we do not necessarily have  $-\hat y^{(k)}\in  DF(\hat x^{(k)},\hat y^{(k)})(\Delta x^{(k)})$ as  it is the case in Newton methods based on the graphical derivative, cf. \cite{Dias,HKM,MoSa21}.
\end{remark}
  Which possibility for calculating the Newton direction is actually chosen, depends on the availability of the respective derivative. Let us analyze the two alternatives for the special case when $F$ is single-valued and continuously differentiable at $x^{(k)}$. In Algorithm \ref{AlgNewton}, the matrices $A^{(k)},B^{(k)}$ with $\rge\big({B^{(k)}}^T,{A^{(k)}}^T)\in \Sp^*F(\hat x^{(k)},\hat y^{(k)})$ fulfill ${A^{(k)}}^T=\nabla F(\hat x^{(k)})^T {B^{(k)}}^T$ and thus the Newton direction is computed by solving the linear system $\big(B^{(k)}\nabla F(\hat x^{(k)})\big)\Delta x=-B^{(k)}\hat y^{(k)}$. On the other hand, given $A^{(k)},B^{(k)}$ with $\rge\big({B^{(k)}},{A^{(k)}})\in \Sp F(\hat x^{(k)},\hat y^{(k)})$, we have $A^{(k)}=\nabla F(\hat x^{(k)})B^{(k)}$ and in this case the Newton direction is computed via $\Delta x^{(k)}=B^{(k)}p=-B^{(k)}\big(\nabla F(\hat x^{(k)})B^{(k)}\big)^{-1}\hat y^{(k)}$. The structure of the second approach resembles the adjoint system method known from PDE-constrained optimization and optimal control.

We now consider convergence of Algorithm \ref{AlgNewton}.
\begin{proposition}\label{PropConvNewton}
  Assume that $F:\R^n\tto\R^n$ is \SCD \ssstar at $(\xb,\yb)\in\gph F$. Then for every  $\epsilon>0$ there is  some $\delta>0$ such that the inequality
  \begin{equation}\label{EqBndNewtonStep}\norm{x-C_L^T(y-\yb)-\xb}\leq \epsilon\sqrt{n(1+\norm{C_L}^2)}\norm{(x,y)-(\xb,\yb)}\end{equation}
  holds for every $(x,y)\in\gph F\cap \B_\delta(\xb,\yb)$ and every $L\in\Sp^*F(x,y)\cap\Z_n^{\rm reg}$.
\end{proposition}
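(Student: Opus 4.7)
The strategy is to apply the SCD semismooth$^*$ inequality from Definition \ref{DefSCDssstar} componentwise to a carefully chosen basis of $L$, and then aggregate the resulting $n$ scalar estimates into a single norm bound. The regularity assumption $L\in\Z_n^{\rm reg}$ is used exactly to guarantee that such a basis, in the form $(C_L e_i, e_i)$, $i=1,\dots,n$, exists for a uniquely determined matrix $C_L$.

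First, let $\epsilon>0$ and choose $\delta>0$ according to Definition \ref{DefSCDssstar}, so that
\begin{equation*}
|\langle x^{*},x-\xb\rangle-\langle y^{*},y-\yb\rangle|\leq \epsilon\,\norm{(x,y)-(\xb,\yb)}\,\norm{(x^{*},y^{*})}
\end{equation*}
for all $(x,y)\in\gph F\cap\B_\delta(\xb,\yb)$ and all $(y^{*},x^{*})\in\bigcup\Sp^{*}F(x,y)$. Fix such an $(x,y)$ and $L\in\Sp^{*}F(x,y)\cap\Z_n^{\rm reg}$. By Proposition \ref{PropC_L}, $L=\rge(C_L,I)$, so for every $i=1,\dots,n$ the pair $(y_i^{*},x_i^{*}):=(C_L e_i,\,e_i)$ belongs to $L\subseteq\bigcup\Sp^{*}F(x,y)$.

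Applying the semismooth$^{*}$ inequality to each such $i$ yields
\begin{equation*}
\bigl|\langle e_i,\,x-\xb\rangle-\langle C_L e_i,\,y-\yb\rangle\bigr|\leq \epsilon\,\norm{(x,y)-(\xb,\yb)}\,\sqrt{1+\norm{C_L e_i}^{2}},
\end{equation*}
and, since $\langle C_L e_i,y-\yb\rangle=\langle e_i,C_L^{T}(y-\yb)\rangle$, the left-hand side is the $i$-th coordinate of the vector $x-C_L^{T}(y-\yb)-\xb$. Using $\norm{C_L e_i}\leq\norm{C_L}$, each coordinate is bounded by $\epsilon\norm{(x,y)-(\xb,\yb)}\sqrt{1+\norm{C_L}^{2}}$.

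Squaring and summing over $i=1,\dots,n$ then gives
\begin{equation*}
\norm{x-C_L^{T}(y-\yb)-\xb}^{2}\leq n\,\epsilon^{2}\,(1+\norm{C_L}^{2})\,\norm{(x,y)-(\xb,\yb)}^{2},
\end{equation*}
and taking square roots produces \eqref{EqBndNewtonStep}. The only subtlety is that $\delta$ must not depend on the particular $L$; this is automatic because the single $\delta$ supplied by Definition \ref{DefSCDssstar} is valid uniformly over all $(y^{*},x^{*})\in\bigcup\Sp^{*}F(x,y)$, hence in particular over all admissible subspaces $L$ and all basis pairs extracted from them. No further obstacle is expected, as the argument is a direct componentwise reading of the defining inequality.
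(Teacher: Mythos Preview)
Your proof is correct and follows essentially the same approach as the paper: apply the SCD semismooth$^*$ inequality to the basis pairs $(C_Le_i,e_i)\in L$ obtained from Proposition~\ref{PropC_L}, recognize the left-hand side as the $i$-th coordinate of $x-C_L^T(y-\yb)-\xb$, and aggregate via the Euclidean norm. Your additional remark on the uniformity of $\delta$ over all admissible $L$ is accurate and is left implicit in the paper.
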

\begin{proof}
Pick $\epsilon>0$ and choose $\delta>0$ such that \eqref{EqDefSSCSemiSmooth} holds. Now consider any
  $(x,y)\in \gph F\cap \B_\delta(\xb,\yb)$ and any $L\in\Sp^*F(x,y)\cap\Z_n^{\rm reg}$. By Proposition \ref{PropC_L} we have $L=\rge(C_L,I)$ and therefore $(C_Le_i,e_i)\in L$, $i=1,\ldots,n$, where $e_i$ denotes the $i$-th unit vector. From \eqref{EqDefSSCSemiSmooth} we obtain
  \begin{align*}\vert\skalp{e_i, \xb-x}-\skalp{C_L e_i,\yb-y}\vert &=\vert\skalp{e_i,x-C_L^T(y-\yb)-\xb}\vert\leq \epsilon \norm{(e_i, C_Le_i)}\norm{(x,y)-(\xb,\yb)}\\
  &\leq \epsilon\sqrt{1+\norm{C_L}^2}\norm{(x,y)-(\xb,\yb)}\end{align*}
  and
  \[\norm{x-C_L^T(y-\yb)-\xb}\leq \epsilon\sqrt{n(1+\norm{C_L}^2)}\norm{(x,y)-(\xb,\yb)}\]
  follows.
\end{proof}
Given $\eta,\kappa>0$, we now define for $x\in\R^n$ the set
\[\G^{\eta,\kappa}_{F,\xb}(x):=\{(\hat x,\hat y,L)\mv (\hat x,\hat y)\in\gph F,\ \norm{(\hat x,\hat y)-(\xb,0)}\leq \eta \norm{x-\xb}, L\in \Sp^*F(\hat x,\hat y)\cap\Z_n^{\rm reg}, \norm{C_L}\leq \kappa\}.\]

  \begin{theorem}\label{ThConvNewton}
  Assume that $F$ is \SCD \ssstar at $(\xb,0)\in\gph F$ and assume that there are  $\eta,\kappa>0$ such
  that for every $x\not\in F^{-1}(0)$ sufficiently close to $\xb$ we have
  $\G_{F,\xb}^{L,\kappa}(x)\not=\emptyset$. Then there exists some $\delta>0$ such that for every
  starting point $x^{(0)}\in\B_\delta(\xb)$ Algorithm \ref{AlgNewton} either stops after
  finitely many iterations at a solution or produces a sequence $x^{(k)}$ which converges
  superlinearly to $\xb$, provided we choose in every iteration $(\hat x^{(k)},\hat y^{(k)},L^{(k)})\in
  \G_{F,\xb}^{\eta,\kappa}(x^{(k)})$.
\end{theorem}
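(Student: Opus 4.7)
The plan is to combine the SCD semismoothness$^*$ estimate from Proposition \ref{PropConvNewton} (applied with $\yb=0$) with the uniform bounds built into the selection rule $\G_{F,\xb}^{\eta,\kappa}(x^{(k)})$. Set $M:=\eta\sqrt{n(1+\kappa^2)}$. By Proposition \ref{PropConvNewton}, for every $\epsilon>0$ there exists some $\delta(\epsilon)>0$ such that whenever $(x,y)\in\gph F\cap \B_{\delta(\epsilon)}(\xb,0)$ and $L\in\Sp^* F(x,y)\cap\Z_n^{\rm reg}$, one has
\[\norm{x-C_L^T y-\xb}\leq \epsilon\sqrt{n(1+\norm{C_L}^2)}\norm{(x,y)-(\xb,0)}.\]

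Applying this to the approximation point $(\hat x^{(k)},\hat y^{(k)})$ and to $L=L^{(k)}$, I would first observe that the Newton update can be rewritten as $x^{(k+1)}-\xb=\hat x^{(k)}-C_{L^{(k)}}^T\hat y^{(k)}-\xb$. Since $(\hat x^{(k)},\hat y^{(k)},L^{(k)})\in\G_{F,\xb}^{\eta,\kappa}(x^{(k)})$, we have $\norm{C_{L^{(k)}}}\leq\kappa$ and $\norm{(\hat x^{(k)},\hat y^{(k)})-(\xb,0)}\leq\eta\norm{x^{(k)}-\xb}$; hence provided $\eta\norm{x^{(k)}-\xb}\leq\delta(\epsilon)$, the preceding inequality yields
\[\norm{x^{(k+1)}-\xb}\leq \epsilon\sqrt{n(1+\kappa^2)}\cdot\eta\norm{x^{(k)}-\xb}=\epsilon M\norm{x^{(k)}-\xb}.\]

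Now I would pin down a neighborhood in which the iteration is well-defined. Fix $\epsilon_0>0$ with $q:=\epsilon_0 M<1$ and pick $\delta_0>0$ so small that $\eta\delta_0\leq\delta(\epsilon_0)$ and so that $\G_{F,\xb}^{\eta,\kappa}(x)\neq\emptyset$ for every $x\in\B_{\delta_0}(\xb)\setminus F^{-1}(0)$, as guaranteed by the assumption. For any starting point $x^{(0)}\in\B_{\delta_0}(\xb)$, either the algorithm terminates in Step~2 at some $x^{(k)}\in F^{-1}(0)$, or an induction based on the bound above shows that all iterates remain in $\B_{\delta_0}(\xb)$ and satisfy $\norm{x^{(k+1)}-\xb}\leq q\norm{x^{(k)}-\xb}$, hence $x^{(k)}\to\xb$ linearly with rate $q$.

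Superlinear convergence then follows by a standard sharpening: given any $\tilde\epsilon>0$, since $x^{(k)}\to\xb$, the inequality $\eta\norm{x^{(k)}-\xb}\leq\delta(\tilde\epsilon/M)$ holds for all sufficiently large $k$, and therefore the step estimate applied with $\epsilon=\tilde\epsilon/M$ yields $\norm{x^{(k+1)}-\xb}\leq\tilde\epsilon\norm{x^{(k)}-\xb}$. Thus $\limsup_k\norm{x^{(k+1)}-\xb}/\norm{x^{(k)}-\xb}\leq\tilde\epsilon$ for every $\tilde\epsilon>0$, i.e., the ratio tends to zero. The only real work is the basic contraction inequality already provided by Proposition \ref{PropConvNewton}; the main conceptual point, which the set $\G_{F,\xb}^{\eta,\kappa}$ is designed to deliver, is the uniform cap $\norm{C_{L^{(k)}}}\leq\kappa$, since without it the factor $\sqrt{n(1+\norm{C_L}^2)}$ in Proposition \ref{PropConvNewton} could blow up and destroy the contraction.
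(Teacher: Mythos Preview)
Your argument is correct and follows essentially the same route as the paper: invoke Proposition~\ref{PropConvNewton} with $\yb=0$, use the uniform bounds $\norm{C_{L^{(k)}}}\leq\kappa$ and $\norm{(\hat x^{(k)},\hat y^{(k)})-(\xb,0)}\leq\eta\norm{x^{(k)}-\xb}$ supplied by $\G_{F,\xb}^{\eta,\kappa}$ to obtain a contraction, and then let $\epsilon$ shrink to upgrade the linear rate to superlinear. The paper chooses the specific value $\epsilon=\big(2\eta\sqrt{n(1+\kappa^2)}\big)^{-1}$ to get the factor $\tfrac12$ and then simply remarks that superlinear convergence is an easy consequence of Proposition~\ref{PropConvNewton}; your version spells out this last step and the well-definedness of the iteration more explicitly, but there is no substantive difference.
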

\begin{proof}
  Using Proposition \ref{PropConvNewton} with $\yb=0$, we can find some $\bar\delta>0$ such that \eqref{EqBndNewtonStep}
  holds with $\epsilon=\frac 1{2\eta\sqrt{n(1+\kappa^2)}}$ for all $(x,y)\in \gph F\cap \B_{\bar\delta}(\xb,0)$ and all
  $L\in\Sp^*F(x,y)\cap\Z_n^{\rm reg}$. Set $\delta:=\bar\delta/\eta$ and consider an iterate
  $x^{(k)}\in\B_\delta(\xb)\not\in F^{-1}(0)$. Then
  \[\norm{(\hat x^{(k)},\hat y^{(k)})-(\xb,0)}\leq \eta\norm{x^{(k)}-\xb}\leq\bar\delta\]
  and consequently
  \[\norm{x^{(k+1)}-\xb}\leq \frac 1{2\eta\sqrt{n(1+\kappa^2)}}\sqrt{n(1+\kappa^2)}\norm{(\hat x^{(k)},\hat y^{(k)})-(\xb,0)}
  \leq  \frac 12 \norm{x^{(k)}-\xb}\]
  by Proposition \ref{PropConvNewton}.
  It follows that for every starting point $x^{(0)}\in\B_\delta(\xb)$  Algorithm \ref{AlgNewton}
  either stops after finitely many iterations with a solution or produces a sequence $x^{(k)}$
  converging to $\xb$. The superlinear convergence of the sequence $x^{(k)}$ is now an easy
  consequence of Proposition \ref{PropConvNewton}.
\end{proof}
So far Algorithm \ref{AlgNewton} is only a straightforward adaption of the \ssstar Newton method from \cite{GfrOut21} to \SCD mappings. However, in \cite{GfrOut21} the \ssstar Newton method was only guaranteed to converge under the assumption of strong metric regularity, whereas we will now prove that for its \SCD variant a less restrictive condition is sufficient.

\begin{proposition}Let $F:\R^n\tto\R^n$ be \SCD regular around $(\xb,0)\in\gph F$. Then for every $\eta>0$ and every $\kappa>{\rm scd\,reg\;}F(\xb,0)$ there is a neighborhood $U$ of $\xb$
such that for every $x\in U$ the set $\G^{\eta,\kappa}_{F,\xb}(x)$ is nonempty and amounts to
\begin{equation}\label{EqSSNewtonG}\G^{\eta,\kappa}_{F,\xb}(x)=\big\{(\hat x,\hat y,L)\mv (\hat x,\hat y)\in\gph F,\ \norm{(\hat x,\hat y)-(\xb,0)}\leq \eta \norm{x-\xb}, L\in \Sp^*F(\hat x,\hat y)\big\}.\end{equation}
\end{proposition}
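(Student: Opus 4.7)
The plan is a direct reduction to Proposition~\ref{PropSCDReg} together with the modulus formula \eqref{EqSCDRegMod}. Notice that the set on the right-hand side of \eqref{EqSSNewtonG} is obtained from $\G^{\eta,\kappa}_{F,\xb}(x)$ by dropping the conditions $L\in \Z_n^{\rm reg}$ and $\norm{C_L}\leq \kappa$; thus the claimed equality is equivalent to asserting that these two conditions are automatically fulfilled for every admissible $(\hat x,\hat y,L)$ once $x$ is close enough to $\xb$. The nonemptiness statement is then almost trivial: one can take $(\hat x,\hat y)=(\xb,0)$.

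First I would invoke Proposition~\ref{PropSCDReg}: since $F$ is \SCD regular around $(\xb,0)$ with $\text{scd\,reg\,}F(\xb,0)<\kappa$, there is a radius $r>0$ such that for every $(\hat x,\hat y)\in\gph F\cap \B_r(\xb,0)$, $F$ has the \SCD property at $(\hat x,\hat y)$, is \SCD regular around $(\hat x,\hat y)$, and $\text{scd\,reg\,}F(\hat x,\hat y)<\kappa$. Now I would set $U:=\B_{r/\eta}(\xb)$, so that for every $x\in U$ every candidate $(\hat x,\hat y)\in\gph F$ satisfying the approximation condition $\norm{(\hat x,\hat y)-(\xb,0)}\leq\eta\norm{x-\xb}$ automatically lies in $\gph F\cap \B_r(\xb,0)$.

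For such $(\hat x,\hat y)$, \SCD regularity at $(\hat x,\hat y)$ ensures that $\Sp^*F(\hat x,\hat y)\neq\emptyset$ and that every $L\in\Sp^*F(\hat x,\hat y)$ satisfies the implication $(y^*,0)\in L\Rightarrow y^*=0$, i.e.\ $L\in \Z_n^{\rm reg}$. By \eqref{EqSCDRegMod} we then have
\[\norm{C_L}\leq \text{scd\,reg\,}F(\hat x,\hat y)<\kappa\qquad \forall L\in\Sp^*F(\hat x,\hat y),\]
so both extra conditions appearing in the definition of $\G^{\eta,\kappa}_{F,\xb}(x)$ are redundant. This establishes the equality \eqref{EqSSNewtonG}.

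Finally, nonemptiness follows by simply choosing $(\hat x,\hat y)=(\xb,0)\in\gph F$, which trivially verifies $\norm{(\hat x,\hat y)-(\xb,0)}=0\leq \eta\norm{x-\xb}$, and picking any $L\in \Sp^*F(\xb,0)$ (available by the \SCD property at $(\xb,0)$); the triple $(\xb,0,L)$ then belongs to the right-hand side of \eqref{EqSSNewtonG}, and by the equality already proved, to $\G^{\eta,\kappa}_{F,\xb}(x)$. The only non-mechanical ingredient in the whole argument is the upper semicontinuity of the \SCD regularity modulus furnished by Proposition~\ref{PropSCDReg}; everything else is bookkeeping.
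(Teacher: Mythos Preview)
Your proof is correct and follows essentially the same route as the paper: invoke Proposition~\ref{PropSCDReg} to obtain a ball $\B_r(\xb,0)$ on which the \SCD regularity modulus stays below $\kappa$, set $U=\B_{r/\eta}(\xb)$, and use \eqref{EqSCDRegMod} to see that the constraints $L\in\Z_n^{\rm reg}$ and $\norm{C_L}\leq\kappa$ are automatically satisfied. Your argument is in fact slightly more explicit than the paper's, which does not spell out the nonemptiness via the choice $(\hat x,\hat y)=(\xb,0)$ but simply leaves it implicit in ``the assertion follows.''
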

\begin{proof}
  By Proposition \ref{PropSCDReg} we can find some positive radius $\rho$ such that $F$ is \SCD regular around $(x,y)$ with modulus ${\rm scd\,reg\;}F(x,y)\leq\kappa$ for every $(x,y)\in\gph F\cap \B_\rho(\xb,0)$. By taking $U:=\B_{\rho/\eta}(\xb)$, for every $x\in U$ and every $(\hat x,\hat y)\in\gph F$ with $\norm{(\hat x,\hat y)-(\xb,0)}\leq \eta \norm{x-\xb}$ we have $(\hat x,\hat y)\in\B_{\rho}(\xb,0)$. By \eqref{EqSCDRegMod} we obtain that $\norm{C_L}\leq\kappa$ whenever $L\in \Sp^*F(\hat x,\hat y)$ and the assertion follows.
\end{proof}
Since  in \eqref{EqSSNewtonG} the right hand side does not depend on $\kappa$, we obtain the following corollary of Theorem \ref{ThConvNewton}.
\begin{corollary}\label{CorConvNewton}
 Assume that $F$ is \SCD \ssstar at $(\xb,0)\in\gph F$ and  \SCD regular around $(\xb,0)$. Then for every $\eta>0$ there is a neighborhood $U$ of $\xb$ such that
 for every starting point $x^{(0)}\in U$ Algorithm \ref{AlgNewton} is well-defined and either stops after finitely many iterations at a solution of \eqref{EqIncl} or produces a sequence $x^{(k)}$ converging superlinearly to $\xb$  for any choice of $(\hat x^{(k)} ,\hat y^{(k)})$ satisfying \eqref{EqBndApprStep} and any $L^{(k)}\in\Sp^*F(\hat x^{(k)} ,\hat y^{(k)})$.
\end{corollary}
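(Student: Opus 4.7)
My approach is to simply chain the preceding Proposition with Theorem~\ref{ThConvNewton}. The key observation, highlighted immediately before the corollary statement, is that the right-hand side of~\eqref{EqSSNewtonG} makes no reference to $\kappa$: under \SCD regularity, membership $L\in \Sp^*F(\hat x,\hat y)$ automatically forces $L\in\Z_n^{\rm reg}$ together with the uniform bound $\norm{C_L}\le\kappa$. Thus the two nontrivial conditions that Theorem~\ref{ThConvNewton} requires one to verify when selecting a Newton subspace are ``free of charge'' in the \SCD regular setting, and the corollary will follow by pulling $\kappa$ out of the picture.

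In detail, fix $\eta>0$ arbitrarily. By the lemma preceding Proposition~\ref{PropSCDReg} we have $\scdreg F(\xb,0)<\infty$, so we may pick some $\kappa>\scdreg F(\xb,0)$. The preceding Proposition then yields a neighborhood $U_1$ of $\xb$ on which identity~\eqref{EqSSNewtonG} holds; on $U_1$ every triple of the form $(\hat x,\hat y,L)$ with $(\hat x,\hat y)\in\gph F$, $\norm{(\hat x,\hat y)-(\xb,0)}\le\eta\norm{x-\xb}$ and $L\in \Sp^*F(\hat x,\hat y)$ automatically lies in $\G^{\eta,\kappa}_{F,\xb}(x)$. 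Applying Theorem~\ref{ThConvNewton} with these $\eta,\kappa$ now yields a radius $\delta>0$ such that for any $x^{(0)}\in \B_\delta(\xb)$ the algorithm either terminates finitely at a solution or generates a sequence converging superlinearly to $\xb$, as long as every triple chosen in Step~3 lies in $\G^{\eta,\kappa}_{F,\xb}(x^{(k)})$. Choose $\rho>0$ so small that $\B_\rho(\xb)\subseteq U_1$ and set $U:=\B_{\delta'}(\xb)$ with $\delta':=\min\{\delta,\rho\}$.

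It remains to check that every iterate $x^{(k)}$ generated from $x^{(0)}\in U$ also lies in $U_1$, so that the admissibility rule~\eqref{EqSSNewtonG} really converts ``any $L^{(k)}\in \Sp^*F(\hat x^{(k)},\hat y^{(k)})$'' into an element of $\G^{\eta,\kappa}_{F,\xb}(x^{(k)})$. This follows by induction from the contraction estimate $\norm{x^{(k+1)}-\xb}\le\tfrac12\norm{x^{(k)}-\xb}$ established in the proof of Theorem~\ref{ThConvNewton}: assuming $x^{(k)}\in\B_{\delta'}(\xb)\subseteq U_1$, the preceding paragraph guarantees that the Newton step is well-defined and that the chosen triple is admissible, so the contraction estimate applies and keeps $x^{(k+1)}\in \B_{\delta'}(\xb)\subseteq U_1$. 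Superlinear convergence then transfers directly from Theorem~\ref{ThConvNewton}.

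There is no real obstacle here; all the substantive work is done in Proposition~\ref{PropConvNewton} (the one-step error bound coming from \SCD semismoothness${}^*$) and in the preceding Proposition (which shows that \SCD regularity is exactly what removes the need to monitor the regularity of $L^{(k)}$ and the norm of $C_{L^{(k)}}$). The only subtlety worth writing out explicitly is the induction keeping the iterates inside $U_1$, since $L^{(k)}$ is now a truly free choice and must never be rejected by the algorithm.
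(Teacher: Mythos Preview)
Your proposal is correct and follows exactly the route the paper intends: the corollary is presented in the paper without an explicit proof, merely as an immediate consequence of the preceding Proposition (giving~\eqref{EqSSNewtonG}) combined with Theorem~\ref{ThConvNewton}. Your only addition is the explicit inductive verification that the iterates remain in $U_1$ so that the free choice of $L^{(k)}$ is always admissible; this detail is tacit in the paper but is exactly the right thing to spell out.
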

\begin{remark}Note that Corollary \ref{CorConvNewton} guarantees not only locally superlinear convergence, but also that the method is locally well-defined, which is an advantage in comparison with the Josephy-Newton method from \cite{Jo79a}.
  In Theorem \ref{ThSS_StrMetrSubreg} below we will show that, under the assumptions of Corollary \ref{CorConvNewton}, the mapping $F$ is strongly metrically subregular at $(\xb,0)$. By \cite[Theorem 6.1]{CiDoKru18}, in such a case the convergence of the  Josephy-Newton method is also locally superlinear, provided the method is well-defined. This, however, need not be the case as illustrated in \cite[Example 5.13]{GfrOut21}, where the assumptions of Corollary \ref{CorConvNewton} are fulfilled, the \ssstar Newton method works well, but the Jospehy-Newton method collapses.
\end{remark}

\section{\label{SecStrMetrSubr}Strong metric subregularity on a neighborhood}
We first present a characterization of strong metric subregularity on a neighborhood, cf. Definition \ref{DefLocStrSubReg},  by means of the outer limiting graphical derivative defined in Definition \ref{DefLimGrDer}.
\begin{theorem}\label{ThLocStrSubreg}Consider a mapping  $F:\R^n\tto\R^n$  and  let $(\xb,\yb)\in\gph F$. Then $F$ is strongly metrically subregular around $(\xb,\yb)$ if and only if the condition
\begin{equation}\label{EqLocStrSubregCrit}
  0\in D^\sharp F(\xb,\yb)(u)\ \Rightarrow u=0
\end{equation}
holds and in this case one has
\begin{equation}\label{EqLocStrSubregMod}
  \lsubreg F(\xb,\yb)=\sup\{\norm{u}\mv  (u,v)\in \gph D^\sharp F(\xb,\yb),\ \norm{v}\leq 1\}.
\end{equation}
\end{theorem}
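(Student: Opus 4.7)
The plan is to reduce everything to the Levy-Rockafellar criterion (Theorem \ref{ThCharRegByDer}(i)) applied at nearby points $(x,y)\in\gph F$ together with a closedness/cone argument for $\gph D^\sharp F(\xb,\yb)$. Two preliminary observations drive the proof: first, since $T^\sharp_A(\xb)$ is a Painlev\'e–Kuratowski outer limit of cones $T_A(x)$, it is itself a closed cone, hence $\gph D^\sharp F(\xb,\yb)$ is a closed cone and contains $\gph DF(\xb,\yb)$ (taking constant sequences $x_k\equiv\xb$). Consequently, the Levy-Rockafellar condition \eqref{EqLevRoCrit} at $(\xb,\yb)$ is implied by \eqref{EqLocStrSubregCrit}, which already yields strong metric subregularity \emph{at} $(\xb,\yb)$ with $\subreg F(\xb,\yb)\le M$, where $M$ denotes the supremum on the right-hand side of \eqref{EqLocStrSubregMod}.

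\emph{Forward direction.} Assume $F$ is strongly metrically subregular around $(\xb,\yb)$ and set $K:=\lsubreg F(\xb,\yb)<\infty$. For any $\varepsilon>0$, for all $(x,y)\in\gph F$ in a sufficiently small neighborhood one has $\subreg F(x,y)\le K+\varepsilon$, hence by Theorem \ref{ThCharRegByDer}(i) and homogeneity,
\[\Norm{u}\le(K+\varepsilon)\Norm{v}\quad\forall (u,v)\in \gph DF(x,y).\]
Given $(u,v)\in\gph D^\sharp F(\xb,\yb)$ with $\Norm{v}\le 1$, pick $(x_k,y_k)\longsetto{\gph F}(\xb,\yb)$ and $(u_k,v_k)\in\gph DF(x_k,y_k)$ with $(u_k,v_k)\to(u,v)$; passing to the limit gives $\Norm{u}\le K+\varepsilon$. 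Letting $\varepsilon\downarrow 0$ yields $M\le K$ and also, taking $v=0$, the validity of \eqref{EqLocStrSubregCrit}.

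\emph{Finiteness of $M$ under \eqref{EqLocStrSubregCrit}.} If $M=\infty$, pick $(u_k,v_k)\in\gph D^\sharp F(\xb,\yb)$ with $\Norm{v_k}\le 1$ and $\Norm{u_k}\to\infty$. By conic homogeneity of $\gph D^\sharp F(\xb,\yb)$, the normalized pairs $(u_k,v_k)/\Norm{u_k}$ again belong to the graph; a cluster point $(\bar u,0)$ with $\Norm{\bar u}=1$ lies in the closed cone $\gph D^\sharp F(\xb,\yb)$, contradicting \eqref{EqLocStrSubregCrit}.

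\emph{Reverse direction and bound $\lsubreg F(\xb,\yb)\le M$.} Fix $M'>M$ and suppose, for contradiction, that no neighborhood $W$ of $(\xb,\yb)$ has the property that $F$ is strongly metrically subregular at every $(x,y)\in\gph F\cap W$ with $\subreg F(x,y)\le M'$. Then there exist $(x_k,y_k)\longsetto{\gph F}(\xb,\yb)$ such that either (a) $F$ fails to be strongly metrically subregular at $(x_k,y_k)$, which by Theorem \ref{ThCharRegByDer}(i) yields $u_k\ne 0$ with $(u_k,0)\in\gph DF(x_k,y_k)$; or (b) there exists $(u_k,v_k)\in\gph DF(x_k,y_k)$ with $\Norm{v_k}\le 1$ and $\Norm{u_k}>M'$. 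In case (a), normalize so that $\Norm{u_k}=1$ and extract a cluster point $(\bar u,0)\in\gph D^\sharp F(\xb,\yb)$ with $\Norm{\bar u}=1$, contradicting \eqref{EqLocStrSubregCrit}. In case (b), split according to whether $(\Norm{u_k})$ is bounded or not; in the bounded case extract a cluster point $(\bar u,\bar v)\in\gph D^\sharp F(\xb,\yb)$ with $\Norm{\bar u}\ge M'>M$ and $\Norm{\bar v}\le 1$, contradicting the definition of $M$; in the unbounded case rescale by $1/\Norm{u_k}$ using conic homogeneity to land again in the situation of case (a). This proves $F$ is strongly metrically subregular around $(\xb,\yb)$ and $\lsubreg F(\xb,\yb)\le M'$, hence $\le M$, completing the identity \eqref{EqLocStrSubregMod}.

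The main technical point is the last step: one must simultaneously handle ``non-subregularity at $(x_k,y_k)$'' and ``subregularity modulus too large'', and in the latter case deal with a possibly unbounded sequence $(u_k)$; both are handled uniformly using the conic character of $\gph DF(x,y)$ and the closedness of the outer limit $\gph D^\sharp F(\xb,\yb)$ in a compact unit sphere argument.
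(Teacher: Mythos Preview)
Your proof is correct and follows essentially the same approach as the paper: both arguments reduce to the Levy--Rockafellar criterion at nearby points $(x,y)\in\gph F$, invoke the definition of $D^\sharp F$ as an outer limit of graphical derivatives, and use normalization/compactness on the sphere to produce contradictions. Your organization is slightly more streamlined---you fold the sufficiency of \eqref{EqLocStrSubregCrit} and the inequality $\lsubreg F(\xb,\yb)\le M$ into a single contradiction argument with the parameter $M'>M$, whereas the paper first proves the equivalence by contraposition (with an unbounded modulus sequence $\kappa_k\to\infty$) and only afterwards establishes the two modulus inequalities separately---but the underlying ideas coincide.
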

\begin{proof}
  We prove the ''if''-part by contraposition. Assume that \eqref{EqLocStrSubregCrit} holds but $F$ is not strongly metrically subregular around $(\xb,\yb)$. Then we  can find a sequence $(x_k,y_k)\longsetto{\gph F}(\xb,\yb)$ such that either $F$ is not strongly metrically subregular at $(x_k,y_k)$ for infinitely many $k$ or $\limsup_{k\to\infty}\subreg F(x_k,y_k)=\infty$. After possibly passing to some subsequence and taking into account Theorem \ref{ThCharRegByDer}, in both cases there is a sequence $\kappa_k\to\infty$ and  $(u_k,v_k)\in\gph DF(x_k,y_k)$ with $\norm{v_k}\leq 1$ such that $\norm{u_k}>\kappa_k \norm{v_k}$. Defining $(\tilde u_k,\tilde v_k):=(u_k,v_k)/\norm{u_k}\in\gph DF(x_k,y_k)$, we have $\norm{\tilde v_k}\leq 1/\kappa_k$ implying $\lim_{k\to\infty}\tilde v_k=0$. By possibly passing to a subsequence once more, $\tilde u_k$ converges to some $u$ with $\norm{u}=1$ and from the definition of $D^\sharp F(\xb,\yb)$ we obtain $(u,0)\in\gph D^\sharp F(\xb,\yb)$ contradicting \eqref{EqLocStrSubregCrit}. This proves the ''if''-part.

   In order to show the ''only if''-part assume that \eqref{EqLocStrSubregCrit} does not hold, so that there is some $u\not=0$ with $(u,0)\in \gph D^\sharp F(\xb,\yb)$. By definition of $D^\sharp F(\xb,\yb)$ there are sequences $(x_k,y_k)\longsetto{\gph F}(\xb,\yb)$ and $(u_k,v_k)\to(u,0)$  with $(u_k,v_k)\in\gph DF(x_k,y_k)$. If $F$ is not strongly metrically subregular at $(x_k,y_k)$ for infinitely many $k$, then it is not strongly metrically subregular around $(\xb,\yb)$ by definition. On the other hand, if $F$ is strongly metrically subregular at $(x_k,y_k)$ then $v_k\not=0$ and $\subreg F(x_k,y_k)\geq \norm{u_k}/\norm{v_k}$, which follows from Theorem \ref{ThCharRegByDer}. Hence, $\limsup_{k\to\infty}\subreg F(x_k,y_k)=\infty$ and $F$ is again not strongly metrically subregular around $(\xb,\yb)$. This proves the ''only  if''-part. There remains to show \eqref{EqLocStrSubregMod}. By definition we have
  \[\lsubreg F(\xb,\yb)=\limsup_{(x,y)\longsetto{\gph F}(\xb,\yb)}\sup\{\norm{u}\mv (u,v)\in\gph DF(x,y), \norm{v}\leq 1\}\]
  and therefore there are sequences $(x_k,y_k)\longsetto{\gph F}(\xb,\yb)$ and $(u_k,v_k)\in \gph DF(x_k,y_k)$ with $\norm{v_k}\leq 1$ and $\norm{u_k}\to \lsubreg F(\xb,\yb)<\infty$. By possibly passing to a subsequence, $(u_k,v_k)$ converges to some $(u,v)$ with $\norm{v}\leq 1$. By definition of $D^\sharp F$ we have $(u,v)\in \gph D^\sharp F(\xb,\yb)$ and
  $\lsubreg F(\xb,\yb)=\norm{u}\leq \xi:= \sup\{\norm{u}\mv \exists v:\ (u,v)\in \gph D^\sharp F(\xb,\yb),\ \norm{v}\leq 1\}$ follows. Next consider a sequence $(u_k,v_k)\in \gph D^\sharp F(\xb,\yb)$ with $\norm{v_k}\leq 1$ and $\norm{u_k}\to \xi$. Then for every $k$ there are $(x_k,y_k)\in\gph F$ and $(u_k',v_k')\in \gph DF(x_k,y_k)$ such that $\norm{(x_k,y_k)-(\xb,\yb)}\leq \frac 1k$ and $(\norm{u_k',v_k')-(u_k,v_k)}\leq \frac 1k$ and
  \[\subreg F(x_k,y_k)\geq \xi_k:=\begin{cases}\frac {\norm{u_k'}}{\norm{v_k'}}&\mbox{if $\norm{v_k'}>1$}\\
  \norm{u_k'}&\mbox{if $\norm{v_k'}\leq1$}\end{cases}\]
  follows. In case when $\norm{v_k'}>1$ we have
  \[\vert\frac {\norm{u_k'}}{\norm{v_k'}}-\norm{u_k'}\vert =\frac{\norm{v_k'}-1}{\norm{v_k'}}\norm{u_k'}\leq \frac 1{k+1} \norm{u_k'}\]
  and $\lim_{k\to\infty}\xi_k=\xi$ follows. Hence $\lsubreg F(\xb,\yb)\geq\limsup_{k\to\infty}\xi_k=\xi$ and relation \eqref{EqLocStrSubregMod} is established.
\end{proof}
From Lemma \ref{LemSCDname1} together with Lemma \ref{LemSCCRegPrimal}, we may  conclude  that strong metric subregularity around $(\xb,\yb)\in\gph F$ implies \SCD regularity around $(\xb,\yb)$
and that $\lsubreg F(\xb,\yb)\geq {\rm scd\,reg\;}F(\xb,\yb)$. Next we  show  that, conversely,
\SCD regularity in conjunction with \SCD semismoothness*  provides a sufficient condition for strong metric subregularity.
\begin{theorem}\label{ThSS_StrMetrSubreg}
  Assume that $F:\R^n\tto\R^n$ is \SCD regular around $(\xb,\yb)\in\gph F$. Then for every $\kappa>{\rm scd\,reg\;}F(\xb,\yb)$ there is a neighborhood $W$ of $(\xb,\yb)$ such that $F$ is strongly metrically subregular  with modulus $\subreg F(x,y)<\kappa$ at every point $(x,y)\in\gph F\cap W$ where $F$ is \SCD \ssstar.
\end{theorem}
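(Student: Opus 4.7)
The plan is to combine three ingredients: the propagation of \SCD regularity to nearby graph points from Proposition \ref{PropSCDReg}, the bound on $\norm{C_L}$ it provides in terms of the regularity modulus, and a Newton-style estimate essentially identical to the one derived in the proof of Proposition \ref{PropConvNewton}, applied now to pairs $(x',y')\in\gph F$ rather than to the single ``target'' point~$\xb$.

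First, I would choose an intermediate constant $\kappa_0$ with ${\rm scd\,reg\;}F(\xb,\yb)<\kappa_0<\kappa$. By Proposition \ref{PropSCDReg} there is an open neighborhood $W$ of $(\xb,\yb)$ such that $F$ is \SCD regular with modulus strictly less than $\kappa_0$ at every $(x',y')\in\gph F\cap W$; shrinking $W$ if necessary, we also have $\Sp^*F(x',y')\neq\emptyset$ for all these points (since $F$ has the \SCD property around $(\xb,\yb)$). In particular, for every such $(x',y')$ and every $L\in\Sp^*F(x',y')$ Proposition \ref{PropC_L} gives a representation $L=\rge(C_L,I)$ with $\norm{C_L}<\kappa_0$ by \eqref{EqSCDRegMod}.

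Next, I would fix an arbitrary $(x,y)\in\gph F\cap W$ at which $F$ is \SCD \ssstar, and pick $\epsilon>0$ small enough that
\[\frac{\kappa_0+\epsilon\sqrt{n(1+\kappa_0^2)}}{1-\epsilon\sqrt{n(1+\kappa_0^2)}}<\kappa.\]
Applying Definition \ref{DefSCDssstar} to this $\epsilon$, there is some $\delta>0$ (which we may assume is so small that $\B_\delta(x,y)\subseteq W$) such that \eqref{EqDefSSCSemiSmooth} holds for all $(x',y')\in\gph F\cap\B_\delta(x,y)$ and all $(y^*,x^*)\in\bigcup\Sp^*F(x',y')$. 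Mimicking the computation from the proof of Proposition \ref{PropConvNewton}, for any such $(x',y')$ and any $L\in\Sp^*F(x',y')$, plugging $(C_Le_i,e_i)\in L$ into \eqref{EqDefSSCSemiSmooth} and combining the $n$ resulting inequalities yields
\[\norm{x'-x-C_L^T(y'-y)}\leq \epsilon\sqrt{n(1+\kappa_0^2)}\,\norm{(x',y')-(x,y)}.\]

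Finally, combining this bound with $\norm{C_L^T(y'-y)}\leq \kappa_0\norm{y'-y}$ and $\norm{(x',y')-(x,y)}\leq \norm{x'-x}+\norm{y'-y}$ and solving the resulting linear inequality in $\norm{x'-x}$, I obtain
\[\norm{x'-x}\leq \frac{\kappa_0+\epsilon\sqrt{n(1+\kappa_0^2)}}{1-\epsilon\sqrt{n(1+\kappa_0^2)}}\,\norm{y'-y}<\kappa\norm{y'-y}\]
for every $(x',y')\in\gph F\cap \B_\delta(x,y)$. Specialising to $y'=y$ forces $x'=x$, giving $F^{-1}(y)\cap \B_\delta(x)=\{x\}$, while for general $(x',y')$ one passes to the infimum over $y'\in F(x')$ (choosing $x'$ from a smaller ball so that either $F(x')$ meets $\B_\delta(y)$, in which case the infimum is attained inside $\B_\delta(y)$, or $d(y,F(x'))$ is already large enough to dominate the small $\norm{x'-x}$). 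This yields strong metric subregularity at $(x,y)$ with a modulus $\subreg F(x,y)$ strictly less than $\kappa$. The only delicate bookkeeping step is choosing the neighbourhood of $x$ in which the ``take an optimal $y'$'' reasoning is legitimate, which I expect to be the main — but standard — obstacle, handled exactly as in the proof of \cite[Theorem 4E.1]{DoRo14}.
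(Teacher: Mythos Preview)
Your proposal is correct and follows essentially the same route as the paper: use Proposition~\ref{PropSCDReg} to propagate the bound $\norm{C_L}\leq\kappa_0$ to nearby graph points, then at a point $(x,y)$ where $F$ is \SCD \ssstar combine the semismooth$^*$ inequality with the basis $(C_Le_i,e_i)$ exactly as in Proposition~\ref{PropConvNewton} to obtain $\norm{x'-x}\leq\kappa_1\norm{y'-y}$ with $\kappa_1<\kappa$, and read off strong metric subregularity. The paper carries out the last step by contradiction (assume a violating sequence $x_k$, extract nearest $y_k\in F(x_k)$, and feed $(x_k,y_k)$ into the same estimate), whereas you argue directly via the two-case split on $\dist{y,F(x')}$; both finishes are standard and equivalent, and your remark that this is ``the main --- but standard --- obstacle'' is accurate.
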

\begin{proof}
  Fixing $\kappa>\tilde \kappa>{\rm scd\,reg\;}F(\xb,\yb)$, by Proposition \ref{PropSCDReg} there is an open neighborhood $W$ of $(\xb,\yb)$ such that for every $(x,y)\in\gph F\cap W$ the mapping $F$ is \SCD regular around $(x,y)$ with modulus ${\rm scd\,reg\;}F(x,y)\leq\tilde \kappa$ . Consider $(\tilde x,\tilde y)\in\gph F\cap W$ where $F$ is \SCD \ssstar. Assume now that $F$ is not metrically subregular  at $(\tilde x,\tilde y)$ or that $\subreg F(\tilde x,\tilde y)>\tilde\kappa$. Then there is some $\kappa'>\tilde\kappa$ and a sequence $x_k$ converging to $\tilde x$ such that $\dist{x_k,F^{-1}(\tilde y)}>\kappa'\dist{\tilde y, F(x_k)}$ $\forall k$. Consider $y_k\in F(x_k)$ with $\dist{\tilde y, F(x_k)}=\norm{y_k-\tilde y}$. Then $y_k$ converges to $\tilde y$ and for all $k$ sufficiently large we have $(x_k,y_k)\in \gph F\cap W$. Pick $L_k\in \Sp^*F(x_k,y_k)$. Using Proposition \ref{PropConvNewton} and \eqref{EqSCDRegMod} we have that $\norm{C_{L_k}}\leq\tilde\kappa$  and
  \[\norm{x_k-C_{L_k}^T(y_k-\tilde y)-\tilde x}\leq \frac 1k \sqrt{n(1+\norm{C_{L_k}}^2)}\norm{(x_k-\tilde x,y_k-\tilde y)}\leq \frac 1k \sqrt{n(1+\tilde\kappa^2)}(\norm{x_k-\tilde x}+\norm{y_k-\tilde y})\]
  implying
  \[(1-\alpha_k)\norm{x_k-\tilde x}\leq \norm{C_{L_k}(y_k-\tilde y)}+ \alpha_k\norm{y_k-\tilde y}\leq (\tilde\kappa+\alpha_k)\norm{y_k-\tilde y},\]
     where $\alpha_k:= \frac 1k \sqrt{n(1+\tilde\kappa^2)}$. Since $\alpha_k\to 0$ as $k\to\infty$, we have $(\tilde\kappa+\alpha_k)/(1-\alpha_k)<\kappa'$ for all $k$ sufficiently large and therefore $\norm{x_k-\tilde x}<\kappa'\norm{y_k-\tilde y}$ in contrary to our assumption. This  shows that $F$ is metrically subregular at $(\tilde x,\tilde y)$ and $\subreg F(\tilde x,\tilde y)\leq \tilde\kappa<\kappa$. Further $\tilde x$ must be an isolated point in $F^{-1}(\tilde y)$. Assume on the contrary that there is a sequence $x_k\in F^{-1}(\tilde y)$ converging to $\tilde x$. Taking $L_k\in \Sp^*F(x_k,\tilde y)$ and applying Proposition \ref{PropConvNewton} with $\epsilon =1/(2\sqrt{n(1+\tilde\kappa^2)}$, we obtain for all $k$ sufficiently large
  \[\norm{x_k-\tilde x}=\norm{x_k-C_{L_k}^T(\tilde y-\tilde y)-\tilde x}\leq \frac 12\norm{(x_k-\tilde x,\tilde y-\tilde y)}=\frac 12\norm{x_k-\tilde x},\]
  a contradiction. This shows that $F$ is even strongly metrically subregular at $(\tilde x,\tilde y)$.
\end{proof}
\begin{remark}
  In the special case of a single-valued locally Lipschitzian mapping $F:\R^n\to\R^n$ the statement of Theorem \ref{ThSS_StrMetrSubreg} can be derived also from \cite[Proposition 1]{Gow04}.
\end{remark}
For \ssstar mappings we arrive thus at the following equivalence.
\begin{corollary}\label{CorStrMetrSubreg}
  Assume that $F:\R^n\tto\R^n$  is \SCD \ssstar around $(\xb,\yb)\in\gph F$. Then $F$ is strongly metrically subregular around $(\xb,\yb)$ if and only if $F$ is \SCD regular around $(\xb,\yb)$ and in this case one has $\lsubreg F(\xb,\yb)= \scdreg F(\xb,\yb)$.
\end{corollary}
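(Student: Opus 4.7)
The plan is to derive this equivalence by combining the sufficient condition of Theorem \ref{ThSS_StrMetrSubreg} with the characterization of strong metric subregularity around a point given by Theorem \ref{ThLocStrSubreg}, while using Lemma \ref{LemSCDname1} as the bridge that relates the primal subspace derivative to the outer limiting graphical derivative.

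For the ``if'' direction, assume $F$ is \SCD regular around $(\xb,\yb)$. Fix an arbitrary $\kappa > \scdreg F(\xb,\yb)$. Theorem \ref{ThSS_StrMetrSubreg} supplies a neighborhood $W$ of $(\xb,\yb)$ such that at every point $(x,y)\in\gph F\cap W$ where $F$ is \SCD \ssstar one has $\subreg F(x,y) < \kappa$. By the hypothesis of \SCD semismoothness${}^*$ around $(\xb,\yb)$ we may shrink $W$ so that every point of $\gph F\cap W$ enjoys the \SCD \ssstar property; consequently $F$ is strongly metrically subregular around $(\xb,\yb)$ and $\lsubreg F(\xb,\yb)\leq \kappa$. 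Letting $\kappa\downarrow \scdreg F(\xb,\yb)$ yields $\lsubreg F(\xb,\yb)\leq\scdreg F(\xb,\yb)$.

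For the ``only if'' direction, assume $F$ is strongly metrically subregular around $(\xb,\yb)$. Theorem \ref{ThLocStrSubreg} delivers the implication $0\in D^\sharp F(\xb,\yb)(u)\Rightarrow u=0$. Since Lemma \ref{LemSCDname1} gives $\bigcup\Sp F(\xb,\yb)\subseteq \gph D^\sharp F(\xb,\yb)$, the implication $(u,0)\in\bigcup\Sp F(\xb,\yb)\Rightarrow u=0$ follows, which by Lemma \ref{LemSCCRegPrimal} is exactly the \SCD regularity condition \eqref{EqSCDReg1}. Combined with the \SCD property around $(\xb,\yb)$ (guaranteed by the assumption of \SCD semismoothness${}^*$ around), this shows that $F$ is \SCD regular around $(\xb,\yb)$.

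Finally, the modulus equality is obtained as a ``sandwich'': the sup-formula from Theorem \ref{ThLocStrSubreg} together with the inclusion from Lemma \ref{LemSCDname1} and the primal sup-formula of Lemma \ref{LemSCCRegPrimal} immediately yield $\scdreg F(\xb,\yb)\leq \lsubreg F(\xb,\yb)$, and the ``if'' part already delivered the reverse inequality. The only delicate point, which I expect to be essentially the sole subtlety, is the quantifier bookkeeping in the ``if'' direction: one must ensure that the neighborhood on which Theorem \ref{ThSS_StrMetrSubreg} supplies the bound $\subreg F(x,y)<\kappa$ can be intersected with a neighborhood on which \SCD semismoothness${}^*$ holds pointwise, so that the $\limsup$ defining $\lsubreg F(\xb,\yb)$ is controlled by $\kappa$.
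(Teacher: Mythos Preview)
Your proposal is correct and follows essentially the same approach as the paper. The paper does not spell out a proof for this corollary, but the sentence preceding Theorem~\ref{ThSS_StrMetrSubreg} (``From Lemma~\ref{LemSCDname1} together with Lemma~\ref{LemSCCRegPrimal}, we may conclude that strong metric subregularity around $(\xb,\yb)$ implies \SCD regularity around $(\xb,\yb)$ and that $\lsubreg F(\xb,\yb)\geq\scdreg F(\xb,\yb)$'') together with Theorem~\ref{ThSS_StrMetrSubreg} itself is exactly the combination you assemble; your explicit observation that the \SCD property \emph{around} $(\xb,\yb)$ is supplied by the standing \SCD semismoothness${}^*$ assumption is a point the paper leaves implicit.
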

\begin{example}\label{ExSubspReg}
Consider the mapping $F:=\R^2\tto\R^2$ given by
\[F(x_1,x_2):=\myvec{x_1\\-x_2}+h(x)+N_C(x_1,x_2),\]
where $C:=\{(x_1,x_2)\mv -\frac 12 x_1\leq x_2\leq \frac 12 x_1\}$ is a convex polyhedral cone and $h:\R^2\to\R^2$ is any continuously differentiable mapping satisfying $h(0,0)=(0,0)$, $\nabla h(0,0)=0$. As the reference point we take $(\xb,\yb)=\big((0,0),(0,0)\big)$.
The mapping $N_C$ is a polyhedral mapping and therefore \ssstar at any point of its graph by \cite{GfrOut21}. Further, $N_C(x)=\partial\delta_C(x)$ is an SCD mapping by Corollary \ref{CorSCDConv}. Thus, $F$ is both an \SCD mapping around and \ssstar at any point of its graph, because it differs from $N_C$  by a continuously differentiable mapping. Now let us calculate $\Sp^*F(\xb,\yb)$. The critical cone $\K_C(\xb,\yb)$ amounts to $C$ and has therefore the 4 faces $C$, $\{(u,\frac 12 u)\mv u\geq 0\}$, $\{(u,-\frac 12 u)\mv u\geq 0\}$ and $\{(0,0)\}$. By using Example \ref{ExPoly} we conclude that $\Sp^*N_C(0,0)$ consists of the 4 subspaces $L_1:=\R^2\times\{(0,0)\}$, $L_2:=\{((u,\frac 12 u),(-\frac 12 v,v))\mv (u,v)\in\R^2\}$, $L_3:=\{((u,-\frac 12 u),(\frac 12 v,v))\mv (u,v)\in\R^2\}$ and $L_4:=\{(0,0)\}\times\R^2$ and Proposition \ref{PropSum} tells us that $\Sp^*F(\xb,\yb)=\{TL_1, TL_2, TL_3,TL_4\}$ where
\[T=\left(\begin{matrix}1&0&0&0\\ 0&1&0&0\\1&0&1&0\\0&-1&0&1
\end{matrix} \right).\]
Straightforward calculations yield
\begin{align*}&TL_1=\{((u,v),(u,-v))\mv (u,v)\in\R^2\}, TL_2=((u,\frac 12 u),(u-\frac 12v,-\frac 12 u+v))\mv (u,v)\in\R^2\},\\
 &TL_3=\{((u,-\frac12 u),(u+\frac 12 v, \frac 12 u+v))\mv (u,v)\in\R^2\},\ TL_4=\{((0,0),(u,v))\mv (u,v)\in\R^2\}.
 \end{align*}
 Now it easily follows that $F$ is \SCD regular around $(\xb,\yb)$ with
 \begin{align*}
   C_{TL_1}=\left(\begin{matrix}1&0\\0&-1\end{matrix}\right),\
   C_{TL_2}=\left(\begin{matrix}\frac 43&\frac 23\\\frac 23&\frac 13\end{matrix}\right),\
   C_{TL_3}=\left(\begin{matrix}\frac 43&-\frac 23\\-\frac 23&\frac 13\end{matrix}\right),\
   C_{TL_4}=\left(\begin{matrix}0&0\\0&0\end{matrix}\right)
 \end{align*}
 and $\norm{C_{TL_1}}=1$, $\norm{C_{TL_2}}=\norm{C_{TL_3}}=\frac 53$, $\norm{C_{TL_4}}=0$. Hence, by virtue of Theorem \ref{ThSS_StrMetrSubreg}, $F$ is strongly metrically subregular around $(\xb,\yb)$ with modulus $\lsubreg F(\xb,\yb)=\frac 53$.

To illustrate this result, we explicitly compute $F^{-1}$ in case $h=0$. One obtains that
\begin{equation}\label{EqExFInv}
  F^{-1}(y)=\begin{cases}
\big\{z_1(y)\big\}&\mbox{if $-\frac 12 y_1+y_2>  0,\;2y_1+y_2 \geq 0$,}\\
\big\{z_1(y),z_2(y),z_3(y)\big\}&\mbox{if $-\frac 12 y_1+y_2\leq 0,\;-\frac 12 y_1-y_2\leq 0$,}\\
\big\{z_3(y)\big\}&\mbox{if $-\frac 12 y_1-y_2>0,\;2y_1-y_2\geq  0$,}\\
\big\{z_4(y)\big\}&\mbox{if $2y_1+y_2\le 0,\;2y_1-y_2\le 0\big\}$,}\\
\end{cases}
\end{equation}
with the mappings $z_i(y)$, $i=1,\ldots,4$, in \eqref{EqExFInv} specified via
\begin{eqnarray*}
z_1(y)&:=&\Big(\frac 43 y_1+\frac 23 y_2,\frac 23 y_1+\frac 13 y_2\Big),\;z_2(y):=(y_1,-y_2),\\
z_3(y)&:=&\Big(\frac 43 y_1-\frac 23 y_2,-\frac 23 y_1+\frac 13 y_2\Big),\;z_4(y):=(0,0).
\end{eqnarray*}
We see that $F^{-1}$ has the isolated calmness property at every point of its graph close to $(0,0)$, but it is not single-valued.
\end{example}
\section{On strong metric regularity}
Our results on strong metric regularity pertain again mappings $F:\R^n\tto\R^n$ and  are partly expressed in terms of certain bases for the subspaces $L\in \Sp F(\xb,\yb)$.

Given a mapping $F:\R^n\tto\R^n$ which is graphically Lipschitzian of dimension $n$ at $(\xb,\yb)\in\gph F$ with transformation mapping $\Phi$ according to Definition \ref{DefGraphLip}, we denote by
$\overline{\nabla}{}^\Phi F(\xb,\yb)$ the collection of all $2n\times n$ matrices $Z$ such that $\rge Z\in \Sp F(\xb,\yb)$ and $\pi_1(\nabla \Phi(\xb,\yb)Z)=I$.

Note that by Proposition \ref{PropGraphLipSCD} for every $L\in \Sp F(\xb,\yb)$ there exists a unique $Z\in \overline{\nabla}{}^\Phi F(\xb,\yb)\cap \M(L)$. Since for every Lipschitzian mapping $f:U\to\R^n$, $U\subset\R^n$ open, the B-subdifferential $\overline{\nabla} f(u)$ is compact for every $u\in U$, we conclude from \eqref{EqGraphLipSCD1} that $\overline{\nabla}{}^\Phi F(\xb,\yb)$ is compact as well.

The next statement provides us with an upper approximation of the graphs of the strict derivative and the limiting coderivative, respectively.
\begin{proposition}\label{PropInclStrDer}
  Let $F:\R^n\tto\R^n$ be graphically Lipschitzian at $(\xb,\yb)\in\gph F$ with transformation mapping $\Phi$.
  Then
  \begin{align}
    \label{EqInclStrictDer}&\gph D_*F(\xb,\yb)\subseteq \bigcup_{Z\in\co\overline{\nabla} ^\Phi F(\xb,\yb)}\rge Z,\\
    \label{EqInclCoDer}&\gph D^*F(\xb,\yb)\subseteq \bigcup_{Z\in\co\overline{\nabla} ^\Phi F(\xb,\yb)}(\rge Z)^*
  \end{align}
\end{proposition}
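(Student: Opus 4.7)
The plan is to reduce both inclusions to classical upper bounds on the paratingent cone and limiting normal cone of a single-valued Lipschitz mapping, by exploiting the graphically Lipschitzian structure to change coordinates through $\Phi$. By Definition~\ref{DefGraphLip}, on some neighborhood $W$ of $(\xb,\yb)$ the $C^1$-diffeomorphism $\Phi$ maps $\gph F\cap W$ onto $\gph f$ for a Lipschitz mapping $f:U\to\R^n$; set $u:=\pi_1(\Phi(\xb,\yb))$, so $\Phi(\xb,\yb)=(u,f(u))$. Standard chain rules under a $C^1$-diffeomorphism give
\[T^P_{\gph F}(\xb,\yb)=\nabla\Phi(\xb,\yb)^{-1}\,T^P_{\gph f}(u,f(u)),\qquad N_{\gph F}(\xb,\yb)=\nabla\Phi(\xb,\yb)^T\,N_{\gph f}(u,f(u)),\]
which via Definition~\ref{DefGenDeriv} translate directly into representations of $\gph D_*F(\xb,\yb)$ and $\gph D^*F(\xb,\yb)$ in terms of the corresponding objects for $f$.

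Next I invoke the following two classical inclusions for the Lipschitz map $f$, expressed through the Clarke generalized Jacobian $\partial^C f(u)=\co\overline{\nabla}f(u)$:
\[T^P_{\gph f}(u,f(u))\subseteq\bigcup_{A\in\partial^Cf(u)}\rge\myvec{I\\A},\qquad N_{\gph f}(u,f(u))\subseteq\bigcup_{A\in\partial^Cf(u)}\rge\myvec{A^T\\-I}.\]
The second follows from Mordukhovich's scalarization and the containment of the limiting subdifferential in the Clarke subdifferential, yielding $D^*f(u)(y^*)\subseteq\partial^Cf(u)^T y^*$. The first rests on Lebourg's mean value theorem applied component-wise along the secants defining the paratingent cone, combined with the upper semicontinuity and compactness of $\partial^Cf$ to identify the limit $w$ of difference quotients as $Av$ for some $A\in\partial^Cf(u)$.

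To finish, I combine these with Proposition~\ref{PropGraphLipSCD}. By \eqref{EqGraphLipSCD1},
\[\overline{\nabla}^\Phi F(\xb,\yb)=\Big\{\nabla\Phi(\xb,\yb)^{-1}\myvec{I\\B}\mv B\in\overline{\nabla}f(u)\Big\},\]
and since $\nabla\Phi(\xb,\yb)^{-1}$ is a linear bijection, taking convex hulls commutes with this action, yielding $\co\overline{\nabla}^\Phi F(\xb,\yb)=\{\nabla\Phi(\xb,\yb)^{-1}\myvec{I\\B}\mv B\in\co\overline{\nabla}f(u)\}$. Chaining this with the tangent-cone chain rule and the paratingent bound delivers \eqref{EqInclStrictDer}. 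For \eqref{EqInclCoDer}, the parallel calculation uses $L^*=S_nL^\perp$ and the identity $(BL)^\perp=B^{-T}L^\perp$ for invertible $B$ to verify that $\nabla\Phi(\xb,\yb)^T\rge\myvec{A^T\\-I}$ coincides (modulo an irrelevant sign) with $(\rge Z)^*$ for the matching $Z=\nabla\Phi(\xb,\yb)^{-1}\myvec{I\\A}\in\co\overline{\nabla}^\Phi F(\xb,\yb)$.

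The main obstacle is the first Lipschitz inclusion, $D_*f(u)(v)\subseteq\partial^Cf(u)\cdot v$. In the vector-valued case Lebourg's theorem applies component by component, producing mean-value points $\xi_k^i$ that may differ across components; the closedness, convexity and upper semicontinuity of $\partial^Cf$ must then be combined in a convex-hull argument to guarantee that the limit matrix lies in $\partial^Cf(u)$ rather than in a strictly larger ``row-wise'' limit set. This is precisely the step which forces the convex hull in $\co\overline{\nabla}^\Phi F(\xb,\yb)$, and is where one cannot simply replace $\co\overline{\nabla}$ by $\overline{\nabla}$.
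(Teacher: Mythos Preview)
Your overall strategy coincides with the paper's: transform through the $C^1$-diffeomorphism $\Phi$, reduce both inclusions to the single-valued Lipschitz map $f$, invoke the Clarke-Jacobian bounds $D_*f(u)(v)\subseteq\partial^C f(u)\cdot v$ and $D^*f(u)(y^*)\subseteq\partial^C f(u)^T y^*$, and then unwind the linear algebra via the description of $\overline{\nabla}^\Phi F(\xb,\yb)$. The paper obtains both Lipschitz bounds by a single citation of \cite[Theorem 9.62]{RoWe98}; your coderivative argument via scalarization is fine as an alternative.

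The gap is in your proposed justification of $D_*f(u)(v)\subseteq\partial^C f(u)\cdot v$. Applying Lebourg's scalar mean value theorem to each component $f_i$ produces mean-value points $z_k^i$ that differ across $i$, and after passing to limits you obtain a matrix whose $i$-th row lies in $\partial^C f_i(u)$. This places the limit matrix only in the row-wise product $\prod_i\partial^C f_i(u)$, which in general strictly contains the Clarke Jacobian $\partial^C f(u)=\co\overline{\nabla}f(u)$. The closedness, convexity and upper semicontinuity of $\partial^C f$ cannot repair this: the row-wise product is itself closed and convex, so no convex-hull argument on $\partial^C f(u)$ will force the inclusion in the right direction. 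In other words, the difficulty you flag as ``the main obstacle'' is real, but the sketch you give for resolving it does not work.

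The correct argument does not go through componentwise Lebourg. One route is \cite[Theorem 9.62]{RoWe98}, which the paper simply cites. Another is Clarke's vector-valued mean value inclusion $f(y)-f(x)\in\co\{A(y-x)\mv A\in\partial^C f([x,y])\}$ (see \cite[Proposition 2.6.5]{Cla83}); its proof uses a separation/selection argument on the full Jacobian, not a coordinate-wise reduction, and combined with the outer semicontinuity of $\partial^C f$ it yields exactly $D_*f(u)(v)\subseteq\partial^C f(u)\cdot v$. With this ingredient in place, the rest of your outline is correct and matches the paper's proof.
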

\begin{proof}
  According to Definition \ref{DefGraphLip} consider the open neighborhoods $W$ of $(\xb,\yb)$, $U$ of $\bar w$ and the Lipschtitzian mapping $f:U\to\R^n$ with $\Phi(\gph F\cap W)=\gph f$, where $\bar w=\pi_1(\Phi(\xb,\yb))$. Consider $(u,v)\in \gph D_*F(\xb,\yb)$ together with sequences $(x^1_k,y^1_k)\longsetto{\gph F} (\xb,\yb)$, $(x^2_k,y^2_k)\longsetto{\gph F} (\xb,\yb)$ and $t_k\downarrow 0$ such that $(u,v)=\lim_{k\to\infty}(x^2_k-x^1_k,y^2_k-y^1_k)/t_k$. For each $k$ let $w^i_k$, $i=1,2$ be given by $w^i_k=\pi_1(\Phi(x^i_k,y^i_k))$.
  Then
  \begin{align*}(w^2_k-w^1_k, f(w^2_k)-f(w^1_k))&=\Phi(x^2_k,y^2_k)-\Phi(x^1_k,y^1_k)=\nabla \Phi(\xb,\yb)(x^2_k-x^1_k,y^2_k-y^1_k)+ \oo(\norm{(x^2_k-x^1_k,y^2_k-y^1_k)})\\
  &=\nabla \Phi(\xb,\yb)(x^2_k-x^1_k,y^2_k-y^1_k)+ \oo(t_k)\end{align*}
  implying that
  \[\lim_{k\to\infty}\frac{(w^2_k-w^1_k, f(w^2_k)-f(w^1_k))}{t_k}=\lim_{k\to\infty}\nabla\Phi(\xb,\yb)\frac{(x^2_k-x^1_k,y^2_k-y^1_k)}{t_k}=\nabla\Phi(\xb)(u,v)\in \gph D_*f(\bar w).\]
  Hence $\pi_2(\nabla\Phi(\xb)(u,v))\in D_*f(\bar w)(\pi_1(\nabla\Phi(\xb)(u,v)))$ and by \cite[Theorem 9.62]{RoWe98} there is some $B\in \co\overline{\nabla} f(\bar w)$ satisfying $\pi_2(\nabla\Phi(\xb)(u,v))= B\pi_1(\nabla\Phi(\xb)(u,v))$ which is the same as $\nabla\Phi(\xb)(u,v)\in \rge (I,B)$. $B$ can be expressed  as  a convex combination $\sum_{i=1}^N\alpha_i B_i$ with $B_i\in \overline{\nabla} f(\bar w)$, $\alpha_i\geq 0$, $\sum_{i=1}^N\alpha_i=1$ and therefore
  \[(u,v)\in \nabla \Phi(\xb,\yb)^{-1}\rge(I,B)=\Phi(\xb,\yb)^{-1}\rge\Big[\sum_{i=1}^N\alpha_i\myvec{I\\B_i}\Big]=\rge\Big[\sum_{i=1}^N\alpha_i \Phi(\xb,\yb)^{-1}\myvec{I\\B_i}\Big].\]
  Denoting $Z_i:=\Phi(\xb,\yb)^{-1}\myvec{I\\B_i}$ we have $\rge Z_i\in\Sp F(\xb,\yb)$ by \eqref{EqGraphLipSCD1} and $\pi_1(\Phi(\xb,\yb)Z_i)=I$ yielding $Z_i\in\overline{\nabla} ^\Phi F(\xb,\yb)$. Thus $(u,v)\in\rge Z$ with $Z=\sum_{i=1}^N\alpha_i Z_i\in\co\overline{\nabla}{}^\Phi F(\xb,\yb)$ verifying \eqref{EqInclStrictDer}.

  Now consider $(y^*,x^*)\in \gph D^*F(\xb,\yb)$ which is the same as
  \[S_n^T(y^*,x^*)\in N_{\gph F}(\xb,\yb)=\nabla \Phi(\xb,\yb)^TN_{\gph f}(\bar w,f(\bar w),\]
   where the second equality follows from \cite[Exercise 6.7]{RoWe98}. Hence $z^*:=S_n\nabla \Phi(\xb,\yb)^{-T}S_n^T(y^*,x^*)\in S_n N_{\gph f}(\bar w,f(\bar w)= \gph D^*f(\bar w)$ implying $\pi_2(z^*)\in D^*f(\bar w)(\pi_1(z^*))$. By \cite[Theorem 9.62]{RoWe98} there is some $B\in \co\overline{\nabla} f(\bar w)$ such that $\pi_2(z^*)=B^T\pi_1(z^*)$ which is the same as $z^*\in\rge(I,B^T)$ and
  \[(y^*,x^*)\in\rge\Big[S_n\nabla \Phi(\xb,\yb)^TS_n^T\myvec{I\\B^T}\Big]\]
  follows. Taking into account that
  \[\rge\Big[S_n\nabla \Phi(\xb,\yb)^TS_n^T\myvec{I\\B^T}\Big]^\perp=\rge\Big[S_n\nabla \Phi(\xb,\yb)^{-1}S_n^T\myvec{B\\-I}\Big]=\rge\Big[-S_n\nabla \Phi(\xb,\yb)^{-1}\myvec{I\\B}\Big],\]
  we obtain
  \[\rge\Big[S_n\nabla \Phi(\xb,\yb)^TS_n^T\myvec{I\\B^T}\Big]^*=S_n\rge\Big[-S_n\nabla \Phi(\xb,\yb)^{-1}\myvec{I\\B}\Big]=\rge\Big[\nabla \Phi(\xb,\yb)^{-1}\myvec{I\\B}\Big].\]
  As we have shown above, the latter subspace equals to $\rge Z$ with $Z\in \co\overline{\nabla} ^\Phi F(\xb,\yb)$ and   $(y^*,x^*)\in (\rge Z)^*$ follows.
\end{proof}
On the basis of Proposition \ref{PropInclStrDer} we can now establish the following characterization of strong metric regularity.
\begin{theorem}\label{ThGenClarkeInv}
  Consider a mapping  $F:\R^n\tto\R^n$  and let $(\xb,\yb)\in\gph F$.
  \begin{enumerate}
    \item[(i)]If $F$ is strongly metrically regular around $(\xb,\yb)$ then it is graphically Lipschitzian of dimension $n$ at $(\xb,\yb)$ with transformation mapping $\Phi(x,y)=(y,x)$ and one has that $\{\rge Z\mv Z\in\co\overline{\nabla}{}^\Phi F(\xb,\yb)\}\subseteq \Z_n^{\rm reg}$. Further, $F$ is \SCD regular around $(\xb,\yb)$ and  $\reg F(\xb,\yb)=\scdreg F(\xb,\yb)$.
    \item[(ii)]Conversely, if $F$ is graphically Lipschitzian of dimension $n$ at $(\xb,\yb)$ with some transformation mapping $\Phi$ such that $\{\rge Z\mv Z\in\co\overline{\nabla}{}^\Phi F(\xb,\yb)\}\subseteq \Z_n^{\rm reg}$ then $F$ is strongly metrically regular around $(\xb,\yb)$.
  \end{enumerate}
\end{theorem}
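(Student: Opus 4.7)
To prove part (i), I invoke Theorem \ref{ThStrMetrReg} to obtain a Lipschitz localization $h\colon Y'\to\R^n$ of $F^{-1}$ around $(\yb,\xb)$ with $h(\yb)=\xb$ and $\gph F\cap(X'\times Y')=\{(h(y),y)\mv y\in Y'\}$. The swap $\Phi(x,y)=(y,x)$ is a smooth involution with $\nabla\Phi=\nabla\Phi^{-1}=\left(\begin{smallmatrix}0&I\\I&0\end{smallmatrix}\right)$ and carries $\gph F\cap(X'\times Y')$ onto $\gph h$, so $F$ is graphically Lipschitzian of dimension $n$ at $(\xb,\yb)$ with transformation mapping $\Phi$. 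Feeding this $\Phi$ into Proposition \ref{PropGraphLipSCD} gives
\[\Sp F(\xb,\yb)=\Big\{\rge\myvec{B\\ I}\mv B\in\overline{\nabla} h(\yb)\Big\},\]
and the normalization $\pi_1(\nabla\Phi\,Z)=I$ in the definition of $\overline{\nabla}{}^\Phi F(\xb,\yb)$ collapses to $\pi_2(Z)=I$. Thus $\overline{\nabla}{}^\Phi F(\xb,\yb)=\{(B;I)\mv B\in\overline{\nabla} h(\yb)\}$, and a convex combination of such matrices still has its lower block equal to $I$. Hence every $Z\in\co\overline{\nabla}{}^\Phi F(\xb,\yb)$ has nonsingular $\pi_2(Z)$, placing $\rge Z$ in $\Z_n^{\rm reg}$ by Proposition \ref{PropC_L}. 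In particular $\Sp^*F(\xb,\yb)\subseteq\Z_n^{\rm reg}$, so $F$ is SCD regular at $(\xb,\yb)$, and Proposition \ref{PropSCDReg} promotes this to SCD regularity around $(\xb,\yb)$.

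For the modulus identity, Proposition \ref{PropC_L} reads off $C_{\rge(B;I)}=B$, and Lemma \ref{LemSCCRegPrimal} returns
\[\scdreg F(\xb,\yb)=\sup\{\|B\|\mv B\in\overline{\nabla} h(\yb)\}.\]
On the other hand, Theorem \ref{ThStrMetrReg} identifies $\reg F(\xb,\yb)$ with the Lipschitz modulus of $h$ at $\yb$, which by the classical Clarke generalized mean value theorem together with upper semicontinuity of $\co\overline{\nabla} h$ and convexity of the operator norm also equals $\max\{\|B\|\mv B\in\overline{\nabla} h(\yb)\}$. Matching the two expressions closes part (i).

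For part (ii), I verify the two pointwise criteria of Theorem \ref{ThCharRegByDer}(iii). Given $(u,0)\in\gph D_*F(\xb,\yb)$, the inclusion \eqref{EqInclStrictDer} of Proposition \ref{PropInclStrDer} places $(u,0)$ in some $\rge Z$ with $Z\in\co\overline{\nabla}{}^\Phi F(\xb,\yb)$; by hypothesis $\rge Z\in\Z_n^{\rm reg}$, whose defining property forces $u=0$, so \eqref{EqStrictCrit} holds. For the Mordukhovich criterion \eqref{EqMoCrit}, I exploit the self-symmetry of $\Z_n^{\rm reg}$ under $L\mapsto L^*$, which follows from Proposition \ref{PropC_L}: writing $L=\rge(C_L,I)$ yields $L^*=\rge(C_L^T,I)$, again with nonsingular second block. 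Hence given $(y^*,0)\in\gph D^*F(\xb,\yb)$, inclusion \eqref{EqInclCoDer} of Proposition \ref{PropInclStrDer} puts $(y^*,0)$ into $(\rge Z)^*\in\Z_n^{\rm reg}$, forcing $y^*=0$. Theorem \ref{ThCharRegByDer}(iii) then delivers strong metric regularity around $(\xb,\yb)$.

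The only real subtlety will be the identification of $\reg F(\xb,\yb)$ with $\max_{B\in\overline{\nabla} h(\yb)}\|B\|$ in part (i); everything else is a mechanical assembly of Propositions \ref{PropGraphLipSCD}, \ref{PropC_L}, and \ref{PropInclStrDer} together with the Lipschitz-localization characterization of strong metric regularity and the symmetry $\Z_n^{\rm reg}\ni L\mapsto L^*\in\Z_n^{\rm reg}$ that Proposition \ref{PropC_L} makes transparent.
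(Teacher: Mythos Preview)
Your proof is correct and, for part (ii) and most of part (i), follows the paper's argument essentially verbatim: Theorem \ref{ThStrMetrReg} supplies the Lipschitz localization, Proposition \ref{PropGraphLipSCD} identifies $\Sp F(\xb,\yb)=\{\rge(B,I)\mv B\in\overline{\nabla} h(\yb)\}$ and hence $\overline{\nabla}{}^\Phi F(\xb,\yb)$, convex combinations preserve the lower block $I$, and part (ii) is dispatched via Theorem \ref{ThCharRegByDer}(iii) combined with the two inclusions of Proposition \ref{PropInclStrDer} and the stability of $\Z_n^{\rm reg}$ under $L\mapsto L^*$.

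The one genuine divergence is in the modulus identity $\reg F(\xb,\yb)=\scdreg F(\xb,\yb)$. You argue externally: Theorem \ref{ThStrMetrReg} gives $\reg F(\xb,\yb)$ as the pointwise Lipschitz modulus of $h$ at $\yb$, and you identify that modulus with $\max\{\norm{B}\mv B\in\overline{\nabla} h(\yb)\}$ via Lebourg's mean value theorem, outer semicontinuity of the Clarke Jacobian, and the fact that a convex function attains its supremum over a compact convex set at an extreme point. The paper instead stays inside its own framework: it uses the strict-derivative formula \eqref{EqModStrMetrReg}, bounds $\gph D_*F(\xb,\yb)$ from above by $\bigcup_{Z\in\co\overline{\nabla}{}^\Phi F(\xb,\yb)}\rge Z$ via Proposition \ref{PropInclStrDer}, and bounds it from below via $\bigcup\Sp F(\xb,\yb)\subseteq\gph D^\sharp F(\xb,\yb)\subseteq\gph D_*F(\xb,\yb)$ together with Lemma \ref{LemSCCRegPrimal}. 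Your route is arguably more elementary (it avoids re-entering Proposition \ref{PropInclStrDer}, which itself rests on \cite[Theorem 9.62]{RoWe98}), while the paper's route has the advantage of being fully expressed in the subspace language developed in the article. Both are valid; your closing remark that this identification is the only real subtlety is accurate. One minor point: your invocation of Proposition \ref{PropSCDReg} to ``promote'' SCD regularity from \emph{at} to \emph{around} is unnecessary, since the paper's definition of SCD regularity is already \emph{around} and the required SCD property around $(\xb,\yb)$ comes directly from Proposition \ref{PropGraphLipSCD}.
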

\begin{proof}If $F$ is strongly metrically regular around $(\xb,\yb)$, then by Theorem \ref{ThStrMetrReg} it is clearly graphically Lipschitzian with the given transformation mapping $\Phi$  and from Proposition \ref{PropGraphLipSCD} we obtain
$\Sp F(\xb,\yb)=\{\rge(B,I)\mv B\in\overline{\nabla} f(\yb)\}$ where $f$ denotes the Lipschitz continuous localization of $F^{-1}$ around $(\yb,\xb)$. Thus
\[\overline{\nabla}{}^\Phi F(\xb,\yb)=\Big\{\myvec{B\\I}\mv  B\in\overline{\nabla} f(\yb)\Big\}\]
and consequently every matrix $Z\in \co \overline{\nabla}{}^\Phi F(\xb,\yb)$ is of the form $Z=\myvec{B\\I}$ with $B\in \co \overline{\nabla} f(\yb)$. From this we can easily deduce that $L:=\rge Z\in\Z_n^{\rm reg}$ and $B=C_L$ showing that $F$ is \SCD regular. In order to verify the formula for the modulus of strong metric regularity we use \eqref{EqModStrMetrReg}. Let $\epsilon>0$ and consider $(u,v)\in\gph D_*F(\xb,\yb)$ with $\norm{v}\leq 1$ and $\norm{u}\geq \reg F(\xb,\yb)-\epsilon$. By Proposition \ref{PropInclStrDer} there is some $Z\in \co \overline{\nabla}{}^\Phi F(\xb,\yb)$ and some $w\in \R^n$ with $(u,v)=Zw$. Thus there is $\bar B\in \co \overline{\nabla} f(\yb)$ such that $u=\bar Bw$ and $v=w$ yielding
\begin{align*}\reg F(\xb,\yb)-\epsilon&\leq\norm{u}\leq \norm{\bar B}\norm{v}\leq\norm{\bar B}\leq\sup\{\norm{B}\mv B\in \co\overline{\nabla} f(\yb)\}\\
&=\sup\{\norm{B}\mv B\in \overline{\nabla} f(\yb)\}=\sup\{\norm{C_L}\mv L\in\Sp F(\xb,\yb)\}=\scdreg F(\xb,\yb)
\end{align*}
by Lemma \ref{LemSCCRegPrimal}.
Since $\epsilon>0$ can be chosen arbitrarily small, there holds $\reg F(\xb,\yb)\leq \scdreg F(\xb,\yb)$. On the other hand,  we have $\bigcup\Sp F(\xb,\yb)\subseteq \gph D^\sharp F(\xb,\yb)\subseteq \gph D_*F(\xb,\yb)$ implying $\scdreg F(\xb,\yb)\leq \reg F(\xb,\yb)$ by Lemma \ref{LemSCCRegPrimal} and \eqref{EqModStrMetrReg}. This proves (i).

The statement (ii) follows from Theorem \ref{ThCharRegByDer} together with Proposition \ref{PropInclStrDer}. If $0\in \gph D_*F(\xb,\yb)(u)$ then there is some $Z\in \co \overline{\nabla}{}^\Phi F(\xb,\yb)$ such that $(u,0)\in\rge Z$ and $u=0$ follows from $\rge Z\in\Z_n^{\rm reg}$. Similarly, if $0\in D^*F(\xb,\yb)(y^*)$ then there is some $Z\in \co \overline{\nabla}{}^\Phi F(\xb,\yb)$ such that $(y^*,0)\in(\rge Z)^*$. Since $\rge Z\in\Z_n^{\rm reg}$, we have $(\rge Z)^*\in\Z_n^{\rm reg}$ by Proposition \ref{PropC_L} and $y^*=0$ follows. Hence, both \eqref{EqStrictCrit} and \eqref{EqMoCrit} are fulfilled and strong metric regularity of $F$ has been established.
\end{proof}

  Consider the special case of a single-valued Lipschitzian mapping $F:\R^n\to\R^n$ so that $\Phi(x,y)=(x,y)$. Then $\overline{\nabla}{}^\Phi F(x,F(x))=\{(I,B)\mv B\in\overline{\nabla} F(x)\}$ by Lemma \ref{LemSCDSingleValued} and therefore
  \[\co \overline{\nabla}{}^\Phi F(x,F(x))=\{(I,B)\mv B\in\co\overline{\nabla} F(x)\}.\]
  Thus the requirement in Theorem \ref{ThGenClarkeInv}(ii) that $\{\rge Z\mv Z\in\co\overline{\nabla}{}^\Phi F(\xb,\yb)\}\subseteq \Z_n^{\rm reg}$ is equivalent to the condition that every matrix $B$ belonging to Clarke's generalized Jacobian $\co\overline{\nabla} F(x)$ is nonsingular. Therefore we may consider Theorem \ref{ThGenClarkeInv}(ii) as a generalization of Clarke's Inverse Function Theorem, see, e.g., \cite[Theorem 7.1.1]{Cla83}, to set-valued mappings.

Note that  $\reg F(\xb,\yb)=\scdreg F(\xb,\yb)$ whenever $F$ is strongly metrically regular around $(\xb,\yb)$.  This fact will not be repeated in the following results, where we present sufficient conditions for strong metric regularity.

The sufficient condition for strong metric regularity in Theorem \ref{ThGenClarkeInv}(ii) depends on the particular basis representation $\overline{\nabla}{}^\Phi F(\xb,\yb)$ of $\Sp F(\xb,\yb)$. The next results are stated in  terms of the elements $L\in\Sp F(\xb,\yb)$  which do not depend on a basis.
\begin{corollary}
  Given $F:\R^n\tto\R^n$ and  a point $(\xb,\yb)\in\gph F$, assume that $\Sp F(\xb,\yb)=\{\bar L\}$ is a singleton. Then $F$ is strongly metrically regular around $(\xb,\yb)$ if and only if $F$ is graphically Lipschitzian of dimension $n$ at $(\xb,\yb)$ and $\bar L\in\Z_n^{\rm reg}$.
\end{corollary}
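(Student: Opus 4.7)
The plan is to deduce both directions of this corollary essentially by reading off Theorem \ref{ThGenClarkeInv} in the degenerate situation where $\Sp F(\xb,\yb)$ reduces to the single subspace $\bar L$. The singleton hypothesis has the convenient effect that the set $\co\overline{\nabla}{}^\Phi F(\xb,\yb)$, which appears in both parts of Theorem \ref{ThGenClarkeInv}, collapses to a single matrix, so the technical distinction between ``all convex combinations'' and ``all extreme points'' evaporates.

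For the necessity direction, I would apply Theorem \ref{ThGenClarkeInv}(i) directly: strong metric regularity around $(\xb,\yb)$ yields that $F$ is graphically Lipschitzian of dimension $n$ at $(\xb,\yb)$ with transformation mapping $\Phi(x,y)=(y,x)$, and moreover $\{\rge Z\mv Z\in\co\overline{\nabla}{}^\Phi F(\xb,\yb)\}\subseteq\Z_n^{\rm reg}$. By Proposition \ref{PropGraphLipSCD} there is a unique $\bar Z\in\overline{\nabla}{}^\Phi F(\xb,\yb)$ with $\rge\bar Z=\bar L$, and since $\Sp F(\xb,\yb)=\{\bar L\}$ this uniqueness forces $\overline{\nabla}{}^\Phi F(\xb,\yb)=\{\bar Z\}$. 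Consequently $\co\overline{\nabla}{}^\Phi F(\xb,\yb)=\{\bar Z\}$ as well, so the inclusion above reads $\bar L=\rge\bar Z\in\Z_n^{\rm reg}$, as required.

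For the sufficiency direction, suppose $F$ is graphically Lipschitzian of dimension $n$ at $(\xb,\yb)$ with transformation mapping $\Phi$ according to Definition \ref{DefGraphLip}, and $\bar L\in\Z_n^{\rm reg}$. Using Proposition \ref{PropGraphLipSCD} once more, every element of $\overline{\nabla}{}^\Phi F(\xb,\yb)$ has range belonging to $\Sp F(\xb,\yb)=\{\bar L\}$, and by the uniqueness statement noted above, $\overline{\nabla}{}^\Phi F(\xb,\yb)=\{\bar Z\}$ for the unique $\bar Z$ with $\rge\bar Z=\bar L$. Therefore $\co\overline{\nabla}{}^\Phi F(\xb,\yb)=\{\bar Z\}$ and the hypothesis of Theorem \ref{ThGenClarkeInv}(ii), namely $\{\rge Z\mv Z\in\co\overline{\nabla}{}^\Phi F(\xb,\yb)\}=\{\bar L\}\subseteq\Z_n^{\rm reg}$, is met. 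Theorem \ref{ThGenClarkeInv}(ii) then yields strong metric regularity of $F$ around $(\xb,\yb)$.

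There is essentially no real obstacle here; the only thing to verify carefully is that the two descriptions of $\overline{\nabla}{}^\Phi F(\xb,\yb)$ in Proposition \ref{PropGraphLipSCD}—via $\Sp F$ on the one hand and via $\overline{\nabla} f$ on the other—are consistent, so that ``$\Sp F(\xb,\yb)$ singleton'' translates into ``$\overline{\nabla}{}^\Phi F(\xb,\yb)$ singleton,'' which is precisely the bijective correspondence in the second equality of \eqref{EqGraphLipSCD1}.
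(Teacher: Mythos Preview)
Your proof is correct and is precisely the argument the paper has in mind: the corollary is stated without proof immediately after Theorem \ref{ThGenClarkeInv}, and your reduction—using the bijection between $\Sp F(\xb,\yb)$ and $\overline{\nabla}{}^\Phi F(\xb,\yb)$ noted just before Proposition \ref{PropInclStrDer} to collapse $\co\overline{\nabla}{}^\Phi F(\xb,\yb)$ to a single matrix—is exactly how both directions of Theorem \ref{ThGenClarkeInv} specialize. For the necessity direction one could shorten the argument slightly by invoking the \SCD regularity conclusion of Theorem \ref{ThGenClarkeInv}(i) together with Lemma \ref{LemSCCRegPrimal} (which gives $\bar L\in\Z_n^{\rm reg}$ directly), but your route via the convex-hull condition is equally valid.
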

When $\Sp F(\xb,\yb)$ is a singleton and $F$ is not graphically Lipschitzian at $(\xb,\yb)$ then $F$ cannot be strongly metrically regular by Theorem \ref{ThGenClarkeInv}(i). However, if $F$ is \SCD \ssstar at (around) $(\xb,\yb)$, then it is at least strongly metrically subregular at (around) $(\xb,\yb)$. Consider the following example.
\begin{example}
  Let $q:\R\to\R$ be given by $q(x)=\frac 23 {\rm sign\,}(x)\vert x\vert^{\frac 32}$. Then $\partial q(x)=\vert x\vert^{\frac 12}$ is not graphically Lipschitzian of dimension $1$ at $(0,0)$
  but it is an \SCD mapping and \SCD \ssstar. Further, $\Sp \partial q(0,0)=\{\{0\}\times \R\}$ is a singleton and clearly $\{0\}\times \R\in\Z_n^{\rm reg}$. Thus we deduce from Corollary \ref{CorStrMetrSubreg} that $\partial q$ is strongly metrically subregular around $(0,0)$. However, $\partial q$ is not strongly metrically regular around $(0,0)$ because $\partial q^{-1}(y)=\emptyset$ for every $y<0$.
\end{example}

We will now present a basis-independent characterization  of strong metric regularity for locally maximally hypomonotone mappings.
\begin{theorem}\label{ThStrRegHypo}
  Assume that $F:\R^n\tto\R^n$ is locally maximally hypomonotone at $(\xb,\yb)\in \gph F$. Then the following two statements are equivalent:
  \begin{enumerate}
  \item[(i)] $F$ is \SCD regular around $(\xb,\yb)$ and for every $L\in\Sp F(\xb,\yb)$ the matrix $C_L$ is positive semidefinite.
   \item[(ii)]$F$ is strongly metrically regular around $(\xb,\yb)$ and
  \begin{equation}\label{EqPseudoMon}
    \liminf_{(x^1,y^1),(x^2,y^2)\longsetto{\gph F}(\xb,\yb)}\frac{\skalp{x^1-x^2,y^1-y^2}}{\norm{x^1-x^2}\norm{y^1-y^2}}\geq 0
  \end{equation}
  with the convention $0/0:=0$.
  \end{enumerate}
\end{theorem}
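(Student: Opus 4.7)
The plan is to exploit Corollary \ref{CorHypomonotone} to recast the problem in terms of the firmly nonexpansive Lipschitz localization $g:=((1+\gamma)I+F)^{-1}$ of $(1+\gamma)I+F$ and to translate the positive semidefiniteness of $C_L$ into a quantitative strengthening of firm nonexpansiveness for the elements of $\overline{\nabla}g(\ub)$, where $\ub:=\yb+(1+\gamma)\xb$. For the $\gamma\geq 0$ provided by local maximal hypomonotonicity, Proposition \ref{PropGraphLipSCD} shows that every $L\in\Sp F(x,y)$ takes the form $\rge(B,I-(1+\gamma)B)$ with $B\in\overline{\nabla}g(\pi_1(\Phi(x,y)))$; if in addition $L\in\Z_n^{\rm reg}$ then $L=\rge(C_L,I)$ with $C_L=B(I-(1+\gamma)B)^{-1}$, and the substitution $v=(I-(1+\gamma)B)w$ yields the key identity
\[\skalp{C_Lv,v}=\skalp{Bw,w}-(1+\gamma)\norm{Bw}^2,\]
so $C_L$ is positive semidefinite if and only if $\skalp{Bw,w}\geq(1+\gamma)\norm{Bw}^2$ for every $w\in\R^n$. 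This translation is the backbone of both directions.

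Direction (ii)$\Rightarrow$(i) is the easy part. Theorem \ref{ThGenClarkeInv}(i) immediately yields \SCD regularity, and Corollary \ref{CorStrongMetrReg} applied with perturbation $h\equiv 0$ gives $\Sp F(\xb,\yb)=\{\rge(B,I):B\in\overline{\nabla}F^{-1}(\yb)\}$, so any $L\in\Sp F(\xb,\yb)$ satisfies $C_L=B=\lim_k\nabla F^{-1}(y_k)$ for a sequence $y_k\to\yb$ along the differentiability set of the Lipschitz localization of $F^{-1}$. Fix $v\in\R^n$ and pick a diagonal $t_k\downarrow 0$ small enough that $(F^{-1}(y_k+t_kv)-F^{-1}(y_k))/t_k$ is within $1/k$ of $\nabla F^{-1}(y_k)v$; the pairs $(F^{-1}(y_k+t_kv),y_k+t_kv)$ and $(F^{-1}(y_k),y_k)$ then converge to $(\xb,\yb)$ inside $\gph F$, and the liminf bound in \eqref{EqPseudoMon} forces $\skalp{Bv,v}/(\norm{Bv}\norm{v})\geq 0$ when $Bv\neq 0$ (the case $Bv=0$ being covered by the $0/0$ convention). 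Hence $\skalp{Bv,v}\geq 0$ for every $v$, i.e., $C_L=B$ is positive semidefinite.

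The converse (i)$\Rightarrow$(ii) splits into two halves. Strong metric regularity is derived from Theorem \ref{ThGenClarkeInv}(ii), for which I must show $\rge Z\in\Z_n^{\rm reg}$ for every $Z=\sum_i\alpha_iZ_i\in\co\overline{\nabla}{}^{\Phi}F(\xb,\yb)$. Writing $Z_i=(B_i,I-(1+\gamma)B_i)$ and $Z=(\bar B,I-(1+\gamma)\bar B)$ with $\bar B=\sum_i\alpha_iB_i$, each $B_i$ satisfies $\skalp{B_iw,w}\geq(1+\gamma)\norm{B_iw}^2$ by the first paragraph, and convexity of the squared norm (Jensen) transfers the inequality to $\bar B$. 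If $(I-(1+\gamma)\bar B)w=0$, the chain
\[\skalp{\bar Bw,w}=\sum_i\alpha_i\skalp{B_iw,w}\geq(1+\gamma)\sum_i\alpha_i\norm{B_iw}^2\geq(1+\gamma)\norm{\bar Bw}^2=\frac{\norm{w}^2}{1+\gamma}=\skalp{\bar Bw,w}\]
collapses to equality; strict convexity of the squared norm then forces $B_iw=\bar Bw=w/(1+\gamma)$ for every $i$ with $\alpha_i>0$, so $(I-(1+\gamma)B_i)w=0$, and \SCD regularity ($L_i\in\Z_n^{\rm reg}$) yields $w=0$.

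The last and hardest step is \eqref{EqPseudoMon}, for which I plan a blow-up/compactness argument. Assume toward a contradiction that some $\alpha>0$ and sequences $(x^j_k,y^j_k)\to(\xb,\yb)$ in $\gph F$, $j=1,2$, realize ratio $\leq-\alpha$, and set $\Delta x_k$, $\Delta y_k$, $\Delta u_k:=\Delta y_k+(1+\gamma)\Delta x_k$, $t_k:=\norm{\Delta u_k}$. Single-valuedness of $g$ excludes $t_k=0$. Passing to a subsequence, $\Delta u_k/t_k\to w^*$ with $\norm{w^*}=1$ while $\Delta x_k/t_k$ stays bounded, and I split according to the cluster value of $\norm{\Delta x_k}/t_k$. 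If $\norm{\Delta x_k}/t_k\to 0$, firm nonexpansiveness of $g$ gives $\skalp{\Delta x_k,\Delta y_k}\geq-\gamma\norm{\Delta x_k}^2$ while $\norm{\Delta y_k}/t_k\to 1$, so the ratio is bounded below by $-\gamma\norm{\Delta x_k}/\norm{\Delta y_k}\to 0$. Otherwise $\Delta x_k/t_k\to v$ with $\norm{v}>0$; Clarke's Lipschitz calculus (the integral representation of $g(u^1_k)-g(u^2_k)$ along the segment $[u^2_k,u^1_k]$ combined with compactness of the averaged Jacobians) writes $v=Bw^*$ for some $B\in\co\overline{\nabla}g(\ub)$, and the Jensen argument of the previous paragraph yields $\skalp{v,w^*-(1+\gamma)v}=\skalp{Bw^*,w^*}-(1+\gamma)\norm{Bw^*}^2\geq 0$. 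The denominator limit $\norm{v}\norm{w^*-(1+\gamma)v}$ is positive, for otherwise $(I-(1+\gamma)B)w^*=0$ and the nonsingularity just established forces $w^*=0$, contradicting $\norm{w^*}=1$. In both cases the ratio tends to a value $\geq 0$, contradicting $\leq-\alpha$.
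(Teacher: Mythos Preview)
Your proof is correct and complete; both directions go through. The overall architecture---reducing to the firmly nonexpansive localization $g=((1+\gamma)I+F)^{-1}$, the identity $\skalp{C_Lv,v}=\skalp{Bw,w}-(1+\gamma)\norm{Bw}^2$, and then invoking Theorem~\ref{ThGenClarkeInv}(ii)---matches the paper's, but two of your sub-arguments differ in a genuinely useful way.

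For the nonsingularity of $I-(1+\gamma)\bar B$ with $\bar B\in\co\overline{\nabla}g(\ub)$, the paper rewrites the condition $\skalp{B_iw,w}\geq(1+\gamma)\norm{B_iw}^2$ as ``$(1+\gamma)B_i$ is firmly nonexpansive'', invokes \cite[Theorem~3.3]{BauMofWa12} to obtain $\norm{(1+\gamma)B_i}<1$ from the nonsingularity of $I-(1+\gamma)B_i$, and then passes the strict norm bound through convex combinations. Your Jensen/strict-convexity equality chain is more elementary and self-contained: it avoids the external reference entirely and extracts exactly the information needed (that a kernel vector of $I-(1+\gamma)\bar B$ must already lie in the kernel of each $I-(1+\gamma)B_i$).

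For the liminf condition \eqref{EqPseudoMon}, the paper normalises by $\norm{\Delta x_k}+\norm{\Delta y_k}$ and uses \emph{both} hypomonotonicity and the freshly established strong metric regularity to guarantee that the limit direction $(u,v)$ has $u\neq 0$ and $v\neq 0$; then $(u,v)\in\gph D_*F(\xb,\yb)$ and Proposition~\ref{PropInclStrDer} delivers a single $\bar Z\in\co\overline{\nabla}{}^\Phi F(\xb,\yb)$ yielding the contradiction $\skalp{u,v}\geq 0$. You instead normalise by $t_k=\norm{\Delta u_k}$ and split cases on the cluster value of $\norm{\Delta x_k}/t_k$. Your Case~1 is disposed of directly by the hypomonotonicity inequality without needing strong metric regularity, and your Case~2 reaches the same strict-derivative conclusion via $(w^*,v)\in\gph D_*g(\ub)$ (your ``integral representation plus compactness'' is precisely \cite[Theorem~9.62]{RoWe98}, which is what the paper cites inside Proposition~\ref{PropInclStrDer}). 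The paper's route is slightly more streamlined---one normalisation, no case split---but yours has the minor advantage that Case~1 is handled before strong metric regularity is even invoked.

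For (ii)$\Rightarrow$(i) the paper argues by contradiction through $(C_Lp,p)\in L\subseteq\gph D_*F(\xb,\yb)$, while you realise the same limit direction explicitly via differentiability points of the Lipschitz localization $F^{-1}$; these are the same idea, phrased differently.
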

\begin{proof}
We first prove (i)$\Rightarrow$(ii). By Corollary \ref{CorHypomonotone} there is some $\lambda\geq 1$ such that $F$ is graphically Lipschitzian at $(\xb,\yb)$ with transformation mapping $\Phi(x,y)=(\lambda x+ y,x)$ and for every $Z\in \overline{\nabla}{}^\Phi F(\xb,\yb)$ there is a firmly nonexpansive   $n\times n$ matrix $B$,  such that $Z=\myvec{B\\I- \lambda B}$. Since $F$ is \SCD regular around $(\xb,\yb)$, $\pi_2(Z)=I- \lambda B$ is nonsingular and $C_{\rge Z}=B(I- \lambda B)^{-1}$. Consider $u\in \R^n$ and set $v:=(I-\lambda B)u$. By the posed assumption, $B(I-\lambda B)^{-1}$ is positive semidefinite and we obtain
  \[0\leq \lambda\skalp{v,  B(I-\lambda B)^{-1}v}=\lambda \skalp{v, Bu}=\lambda\skalp{(I- \lambda B)u, Bu}\]
  implying $\skalp{u, \lambda Bu}\geq \norm{\lambda Bu}^2$. Thus $\lambda B$ is firmly nonexpansive and, consequently, $\norm{2\lambda B-I}\leq 1$. Since $I-\lambda B$ is nonsingular, we deduce from \cite[Theorem 3.3]{BauMofWa12} that $\norm{\lambda B}<1$.  Now consider $\bar Z\in \co \overline{\nabla}{}^\Phi F(\xb,\yb)$. It follows that $\bar Z=\myvec{\bar B\\I- \lambda \bar B}$, where $\bar B$ is some convex combination of  matrices $B_i$ with $\norm{\lambda B_i}<1$ and $\norm{2 \lambda B_i-I}\leq 1$. It follows that  $\norm{\lambda \bar B}<1$ and  $\norm{2 \lambda \bar B-I}\leq 1$. Thus $\pi_2(\bar Z)=I-\lambda \bar B$ is nonsingular and from Proposition \ref{PropC_L} we may deduce that $\rge\bar Z\in\Z_n^{\rm reg}$. Now strong regularity of $F$ follows from Theorem \ref{ThGenClarkeInv}(ii). Next we prove \eqref{EqPseudoMon} by contraposition. Assume on the contrary that there are sequences $(x^i_k,y^i_k)\longsetto{\gph F}(\xb,\yb)$, $i=1,2$, and some $\eta>0$ such that $\skalp{x^1_k-x^2_k,y_k^1-y_k^2}< -\eta \norm{x_k^1-x_k^2}\norm{y_k^1-y_k^2}$ for all $k$. By local hypomonotonicity,
  $\skalp{x^1_k-x^2_k,y^1_k-y^2_k}\geq -(\lambda-1)\norm{x^1_k-x^2_k}^2$ and therefore $\lambda>1$ and $\norm{x^1_k-x^2_k}\geq \frac \eta{\lambda-1}\norm{y_k^1-y_k^2}$. On the other hand, by strong metric regularity, choosing $c>\reg F(\xb,\yb)$, we have $\norm{x^1_k-x^2_k}\leq c\norm{y_k^1-y_k^2}$ for all $k$ sufficiently large. Let $t_k:=\norm{y_k^1-y_k^2}+ \norm{x^1_k-x^2_k}$. By possibly passing to a subsequence, $(x^1_k-x^2_k,y_k^1-y_k^2)/t_k$ converges to some $(u,v)\in D_*F(\xb,\yb)$ with $\norm{u}+\norm{v}=1$, $\frac \eta{\lambda-1}\norm{v}\leq \norm{u}\leq c\norm{v}$ and $\skalp{u,v}\leq -\eta\norm{u}\norm{v}$. We deduce that both $u$ and $v$ are nonzero and thus $\skalp{u,v}<0$. By Proposition \ref{PropInclStrDer} there is some $\bar Z\in \co \overline{\nabla}{}^\Phi F(\xb,\yb)$ and some $p\in\R^n$ with $(u,v)=\bar Z p$. As shown above, $\bar Z=\myvec{\bar B\\I- \lambda \bar B}$ for some $n\times n$ matrix $\bar B$ with $\norm{2\lambda \bar B -I}\leq 1$, i.e., $\lambda \bar B$ is firmly nonexpansive. It follows that
  \[0\leq \skalp{p,\lambda\bar Bp}-\norm{\lambda \bar Bp}^2=\lambda \skalp{\bar Bp, p-\lambda\bar Bp}=\lambda\skalp{u,v},\]
   contradicting $\skalp{u,v}<0$. Hence the implication (i)$\Rightarrow$(ii) is verified.

  To show the reverse implication note that strong metric regularity implies \SCD regularity by Theorem \ref{ThGenClarkeInv}. We prove that $C_L$ is positive semidefinite for every $L\in \Sp F(\xb,\yb)$ by contradiction. Assume that there exists some $L\in \Sp F(\xb,\yb)$ and $p\in\R^n$ with $\skalp{C_Lp,p}<0$. Since $(C_Lp,p)\in L\subseteq D^\sharp F(\xb,\yb)\subseteq D_*F(\xb,\yb)$, there are sequences $t_k\downarrow 0$, $(x_k,y_k)\longsetto{\gph F}(\xb,\yb)$ and $(u_k,v_k)\to(C_Lp,p)$ with $(x_k',y_k'):=(x_k,y_k)+t_k(u_k,v_k)$. It follows that $(x_k'-x_k)/\norm{x_k'-x_k}=u_k/\norm{u_k}\to C_Lp$, $(y_k'-y_k)/\norm{y_k'-y_k}=v_k/\norm{v_k}\to p/\norm{p}$ and therefore
  \begin{equation}\label{EqAuxPseudoMon}\lim_{k\to\infty}\frac{\skalp{x_k'-x_k,y_k'-y_k}}{\norm{x_k'-x_k}\norm{y_k'-y_k}}=\frac{\skalp{C_L p,p}}{\norm{C_Lp}\norm{p}}<0\end{equation}
  contradicting \eqref{EqPseudoMon}.
  \end{proof}

\begin{corollary}\label{CorStrRegMonMap}
  Let $F:\R^n\tto\R^n$ be locally monotone around $(\xb,\yb)\in\gph F$. Then the following statements are equivalent.
  \begin{enumerate}
    \item[(i)]$F$ is strongly metrically regular around $(\xb,\yb)$.
    \item[(ii)]$F$ is metrically regular around $(\xb,\yb)$.
    \item[(iii)]$F$ is \SCD regular around $(\xb,\yb)$ and locally maximally monotone at $(\xb,\yb)$.
  \end{enumerate}
  In this case, the matrices $C_L$, $L\in\Sp F(\xb,\yb)$, are positive semidefinite.
\end{corollary}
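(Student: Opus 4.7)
The plan is to establish (i) $\Rightarrow$ (ii) (trivial), then (ii) $\Rightarrow$ (i), then (i) $\Rightarrow$ (iii), and finally (iii) $\Rightarrow$ (i), with the positive semidefiniteness of $C_L$ emerging along the way.

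For (ii) $\Rightarrow$ (i), I shrink the neighborhoods so that both the Aubin estimate for $F^{-1}$ (equivalent to metric regularity of $F$) and local monotonicity \eqref{EqLocMon} hold on the same $X\times Y$. Given two candidates $x_1,x_2\in F^{-1}(y)\cap X$ for a common $y$ and an arbitrary direction $v\in\R^n$, the Aubin property supplies, for each small $t>0$, some $x_2(t)\in F^{-1}(y+tv)$ with $\|x_2(t)-x_2\|\leq\kappa t\|v\|$; applying monotonicity to the pairs $(x_1,y)$ and $(x_2(t),y+tv)$ yields $\skalp{v,x_2(t)-x_1}\geq 0$, and letting $t\downarrow 0$ gives $\skalp{v,x_2-x_1}\geq 0$. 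Swapping $x_1$ and $x_2$ forces equality, and since $v$ is arbitrary, $x_1=x_2$. Hence $F^{-1}$ admits a single-valued Lipschitzian localization and Theorem~\ref{ThStrMetrReg} delivers strong metric regularity.

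For (i) $\Rightarrow$ (iii), SCD regularity is part of Theorem~\ref{ThGenClarkeInv}(i). Denoting by $h$ the Lipschitzian localization of $F^{-1}$ around $(\yb,\xb)$, I verify local maximal monotonicity as follows: for any monotone $T$ extending $F$ on a small $X\times Y$ and any $(\tilde x,\tilde y)\in\gph T\cap(X\times Y)$, applying monotonicity of $T$ to the pairs $(h(\tilde y+tv),\tilde y+tv)\in\gph F\subseteq\gph T$ and letting $t\downarrow 0$ for both signs of $v$ gives $\skalp{v,h(\tilde y)-\tilde x}=0$ for every $v\in\R^n$, whence $\tilde x=h(\tilde y)$ and $(\tilde x,\tilde y)\in\gph F$.

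For (iii) $\Rightarrow$ (i), I plan to invoke the implication (i) $\Rightarrow$ (ii) of Theorem~\ref{ThStrRegHypo}. Local maximal monotonicity is local maximal hypomonotonicity with $\gamma=0$, and SCD regularity is assumed, so by Lemma~\ref{LemSCCRegPrimal} each $L\in\Sp F(\xb,\yb)$ lies in $\Z_n^{\rm reg}$ and can be written $L=\rge(C_L,I)$. I realise $L$ as a limit in $\Z_n$ of $L_k=T_{\gph F}(x_k,y_k)$ at graphically smooth points $(x_k,y_k)\in\OO_F$ tending to $(\xb,\yb)$; at each such point local monotonicity applied to approximating sequences in $\gph F$ gives $\skalp{u_k,v_k}\geq 0$ for all $(u_k,v_k)\in L_k$, and Painlev\'e--Kuratowski convergence (Lemma~\ref{LemCompact}(v)) passes this to $\skalp{u,v}\geq 0$ for all $(u,v)\in L$. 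Parametrising $(u,v)=(C_Lp,p)$ yields $\skalp{C_Lp,p}\geq 0$, establishing both the hypothesis of Theorem~\ref{ThStrRegHypo} and the closing ``In this case'' claim. The main obstacle is orchestrating this double limit carefully, because the approximation of $L\in\Sp F(\xb,\yb)$ by tangent subspaces and the approximation of elements of each tangent subspace by secants must be combined; once the definitions of $\Sp F$ and the tangent cone are unpacked, however, the reduction is routine.
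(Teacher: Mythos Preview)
Your proof is correct, but it differs from the paper's in several places, all in the direction of being more self-contained.

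For (ii)$\Rightarrow$(i) the paper simply cites the Kenderov-type result \cite[Theorem 3G.5]{DoRo14}, whereas you reprove it by hand via the Aubin property and a limiting argument along rays $y+tv$. For (i)$\Rightarrow$(iii) the paper observes that local maximal monotonicity of $F$ at $(\xb,\yb)$ is equivalent to that of $F^{-1}$ at $(\yb,\xb)$ and then invokes \cite[Lemma 2.1]{MoNg14} for the Lipschitz localization $h$; you instead give a direct argument with $(h(\tilde y+tv),\tilde y+tv)$, which works but requires a slightly careful choice of neighborhoods so that these pairs actually lie in $\gph F\cap(X\times Y)\subseteq\gph T$. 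For the positive semidefiniteness of $C_L$ the paper argues by contradiction through the strict derivative, i.e., if $\skalp{C_Lp,p}<0$ then $(C_Lp,p)\in L\subseteq \gph D^\sharp F(\xb,\yb)\subseteq\gph D_*F(\xb,\yb)$ yields secant sequences violating local monotonicity as in \eqref{EqAuxPseudoMon}; you instead establish $\skalp{u,v}\geq 0$ on each approximating tangent subspace $L_k$ at points of $\OO_F$ and pass to the limit. Both routes then feed into Theorem~\ref{ThStrRegHypo} for (iii)$\Rightarrow$(i). Your version avoids external references at the cost of more detail; the paper's version is shorter and reuses the machinery already set up in the proof of Theorem~\ref{ThStrRegHypo}.
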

\begin{proof}
  The equivalence between (i) and (ii) follows from the definitions of (strong) metric regularity and \cite[Theorem 3G.5]{DoRo14}. In view of Theorem \ref{ThStrRegHypo}, in order to verify (i)$\Rightarrow$(iii), we only have to show that $F$ is locally maximally monotone. By taking into account that $\gph F^{-1}=\{(y,x)\mv (x,y)\in\gph F\}$, it follows readily from the definition that $F$ is locally maximally monotone at $(\xb,\yb)$ if and only if $F^{-1}$ is locally maximally monotone at $(\yb,\xb)$. $F^{-1}$ has a Lipschitz continuous monotone localization and is therefore locally maximally monotone  at $(\yb,\xb)$ by \cite[Lemma 2.1]{MoNg14}. This proves (i)$\Rightarrow$(iii). We now claim that (iii) implies that $C_L$ is positive semidefinite for every $L\in \Sp F(\xb,\yb)$. Assuming that $C_L$ is not positive semidefinite for some $L\in \Sp F(\xb,\yb)$, the same arguments as in the proof of Theorem \ref{ThStrRegHypo} can be used to obtain  \eqref{EqAuxPseudoMon} contradicting the local monotonicity of $F$. Hence our claim holds true and the implication (iii)$\Rightarrow$(i) follows from Theorem \ref{ThStrRegHypo}.
\end{proof}

\begin{remark}\label{RemPosDefCoder}Theorem \ref{ThStrRegHypo} improves the sufficient conditions for strong metric regularity obtained by Nghia et al \cite{NgPhTr20}. E.g., in \cite[Corollary 3.11]{NgPhTr20} it is shown that $F:\R^n\tto\R^n$ is strongly metrically regular around $(\xb,\yb)$ if
\begin{enumerate}
\item[(a)]$F$ is locally hypomonotone at $(\xb,\yb)$ and
\item[(b)] $D^*F(\xb,\yb)$ is positive definite in the sense that
$\skalp{u^*,v^*}>0$ holds for all $u^*\in D^*F(\xb,\yb)(v^*)$, $v^*\not=0$.
\end{enumerate}
We now show that these assumptions imply the assumptions of Theorem \ref{ThStrRegHypo}(i).
Indeed, by the positive definiteness of the coderivative $D^*F(\xb,\yb)$  together with the Mordukhovich criterion \eqref{EqMoCrit} we may deduce that $F$ is metrically regular around $(\xb,\yb)$  and therefore \SCD regular as well. Further, for every $L\in\Sp F(\xb,\yb)$ we have $\rge(C_L^T,I)\subseteq \gph D^*F(\xb,\yb)$ by Proposition \ref{PropC_L} and Remark \ref{RemSymmetry} implying $\skalp{C_L^Tp,p}>0$ for all $p$ with $C_L^Tp\not=0$ by assumption (b). Hence, $C_L$ is positive semidefinite. By assumption (a) there is some $\gamma\geq 0$ such that $\gamma I+F$ is locally monotone at $(\xb,\yb)$ and, since $D^*(\gamma I+F)(\xb,\gamma\xb+\yb)=\gamma I+D^*F(\xb,\yb)$ is positive definite, we conclude from the Mordukhovich criterion that $\gamma I+ F$ is metrically regular around $(\xb,\gamma\xb+\yb)$. Thus, by Corollary \ref{CorStrRegMonMap}, the mapping $\gamma I+F$ is locally maximally monotone at $(\xb,\gamma \xb+\yb)$ and  $F$ is locally maximally hypomonotone at $(\xb,\yb)$ by the definition.
Hence, we have shown that the assumptions of Theorem \ref{ThStrRegHypo}(i) are weaker than those of \cite[Corollary 3.11]{NgPhTr20}. When we now consider, e.g., the mapping $F(x_1,x_2)=(-x_2,x_1)$ and an arbitrary reference point $(\xb,F(\xb))$, we observe that the positive definiteness assumption (b) is not fulfilled. Nevertheless, Theorem \ref{ThStrRegHypo} works well and so it in fact improves the mentioned statement in \cite{NgPhTr20}.
\end{remark}

Next we  turn our attention to the characterization of tilt-stable minimizers by \SCD regularity of the subdifferential.

\begin{definition} [\bf tilt-stable minimizers]\label{DefTiltStab} Let $q:\R^n\to\bar \R$, and let $\xb\in\dom q$. Then:
\begin{enumerate}
\item[(i)] $\xb$ is a {\em tilt-stable local minimizer} of $q$ if there is a number $\gamma>0$ such that the mapping
\begin{equation}\label{EqM_gamma}
M_\gamma(x^\ast):=\argmin\big\{q(x)-\skalp{x^\ast,x}\mv x\in\B_\gamma(\xb)\big\},\quad x^*\in\R^n,
\end{equation}
is single-valued and Lipschitz continuous in some neighborhood of $\bar x^*=0\in\R^n$ with $M_\gamma(0)=\{\xb\}$.
\item[(ii)] The {\em exact bound of tilt stability} of $q$ at $\xb$ is defined by
\begin{eqnarray}\label{tilt-exact}
{\rm tilt\;}(q,\xb):=\limsup_{\AT{v^*,w^*\to0}{v^*\not=w^*}}\frac{\norm{M_\gamma(v^*)-M_\gamma(w^*)}}{\norm{v^*-w^*}}.
\end{eqnarray}
\end{enumerate}
\end{definition}
The theory developed in Section \ref{SecSemiSm} enables us to provide a new characterization of tilt-stable local minimizers.

\begin{theorem}\label{ThTiltStab}
  For a function $q:\R^n\to\bar \R$ having $0\in\partial q(\xb)$ and such that $q$ is both prox-regular and subdifferentially continuous at  $\xb$ for $\xba=0$, the following statements are equivalent:
  \begin{enumerate}
  \item[(i)] $\xb$ is a tilt-stable local minimizer of $q$.
  \item[(ii)] $\partial q$ is \SCD regular around $(\xb,0)$ and $C_L$ is positive semidefinite for every $L\in\Sp \partial q(\xb,0)$.
  \end{enumerate}
  Further, if $\xb$ is a tilt-stable minimizer of $q$ then ${\rm tilt\;}(q,\xb)=\scdreg\partial q(\xb,0)$.
\end{theorem}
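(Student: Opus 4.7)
The plan is to bridge tilt stability of $\xb$ with the SCD criterion (ii) through Theorem \ref{ThStrRegHypo} applied to $F=\partial q$. The prox-regularity and subdifferential continuity hypotheses, via Proposition \ref{PropProxRegularQ}, furnish the local maximal hypomonotonicity of $\partial q$ at $(\xb,0)$ required by Theorem \ref{ThStrRegHypo}. That theorem then equates (ii) with the conjunction of strong metric regularity of $\partial q$ around $(\xb,0)$ and the pseudomonotonicity estimate \eqref{EqPseudoMon}. The theorem thus reduces to proving that tilt stability is equivalent to this pair of properties, with matching moduli.

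For (i) implying SMR plus \eqref{EqPseudoMon}: under tilt stability with parameter $\gamma$, continuity of $M_\gamma$ keeps $M_\gamma(x^*)$ in $\inn\B_\gamma(\xb)$ for $x^*$ near $0$, so the first-order optimality condition delivers $x^*\in \partial q(M_\gamma(x^*))$, whence $\gph M_\gamma\subseteq \gph(\partial q)^{-1}$ near $(0,\xb)$. To obtain the opposite inclusion---i.e., to show that every $(x,x^*)\in \gph\partial q$ close to $(\xb,0)$ satisfies $x=M_\gamma(x^*)$---I would combine the prox-regularity inequality $q(x')\ge q(x)+\skalp{x^*,x'-x}-\tfrac{\rho}{2}\norm{x'-x}^2$ with the uniform quadratic growth of $q(\cdot)-\skalp{x^*,\cdot}$ around $M_\gamma(x^*)$ (known to be equivalent to tilt stability under prox-regularity and subdifferential continuity) to rule out spurious critical points on a sufficiently small neighborhood. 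Theorem \ref{ThStrMetrReg} then yields strong metric regularity of $\partial q$ together with the modulus identity ${\rm tilt\;}(q,\xb)=\reg\partial q(\xb,0)$. The estimate \eqref{EqPseudoMon} follows by adding the variational inequalities defining $M_\gamma(v^*)$ and $M_\gamma(w^*)$ as minimizers, which produces $\skalp{v^*-w^*,M_\gamma(v^*)-M_\gamma(w^*)}\ge 0$, transported back along $\gph\partial q$ through the local identification $(\partial q)^{-1}=M_\gamma$.

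For the converse, strong metric regularity supplies a Lipschitz localization $h$ of $(\partial q)^{-1}$ around $(0,\xb)$, and \eqref{EqPseudoMon} forces $h$ to be monotone there, which in turn renders $\partial q$ locally maximally monotone at $(\xb,0)$. The classical integration result of \cite{PolRo96} for prox-regular subdifferentially continuous functions then upgrades this to local convexity of $q$ on a neighborhood of $\xb$ (modulo an affine term), so that $x^*\in \partial q(x)$ is equivalent to $x$ being a local minimizer of $q(\cdot)-\skalp{x^*,\cdot}$ there. Choosing $\gamma$ small enough that $h$ maps a neighborhood of $0$ into $\inn\B_\gamma(\xb)$ then forces $M_\gamma(x^*)=h(x^*)$, establishing tilt stability, with ${\rm tilt\;}(q,\xb)$ equal to the Lipschitz modulus of $h$, and hence to $\reg\partial q(\xb,0)$ by Theorem \ref{ThStrMetrReg} and to $\scdreg\partial q(\xb,0)$ by Theorem \ref{ThGenClarkeInv}(i). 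The principal obstacle lies in the passage between ``$h(x^*)$ is a critical point'' and ``$h(x^*)$ is a tilted local minimizer'': the forward direction needs the quadratic growth characterization of tilt stability to absorb the negative quadratic in the prox-regularity inequality, while the reverse direction depends on the subtle integration fact that local maximal monotonicity of the subdifferential of a prox-regular subdifferentially continuous function forces local convexity of the function itself.
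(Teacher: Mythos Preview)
Your route through Theorem~\ref{ThStrRegHypo} is genuinely different from the paper's. The paper invokes \cite[Theorem~1.3]{PolRo98} to replace tilt stability by positive definiteness of $D^*\partial q(\xb,0)$ (call this condition~(iii)), and then proves (ii)$\Leftrightarrow$(iii) directly: for (ii)$\Rightarrow$(iii) it exploits the symmetry of the matrices $B$ in Proposition~\ref{PropProxRegularQ} to show that $C_L$ positive semidefinite forces $\norm{B}<1$, uses compactness of $\overline{\nabla}{}^\Phi\partial q(\xb,0)$ to get a uniform bound $\eta<1$, and then applies Proposition~\ref{PropInclStrDer} together with an eigenvalue argument on $\bar B(I-\bar B)$ to obtain \eqref{EqPosDefCoder}. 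The implication (iii)$\Rightarrow$(ii) is almost immediate from $L^*\subseteq\gph D^*\partial q(\xb,0)$. Thus the paper never needs to discuss monotonicity of the localization or local convexity of $q$.

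Your reduction (ii)$\Leftrightarrow$[SMR $+$ \eqref{EqPseudoMon}] via Theorem~\ref{ThStrRegHypo} is clean, and (i)$\Rightarrow$[SMR $+$ \eqref{EqPseudoMon}] is essentially right (indeed $M_\gamma$ is monotone by optimality, and that it localizes $(\partial q)^{-1}$ is precisely \cite[Theorem~1.3]{PolRo98}). The gap is in the converse. You assert that \eqref{EqPseudoMon} forces the Lipschitz localization $h$ to be monotone, but \eqref{EqPseudoMon} is only a \emph{liminf} condition on the normalized inner product: for every $\epsilon>0$ it gives $\skalp{h(y^1)-h(y^2),y^1-y^2}\ge -\epsilon\norm{h(y^1)-h(y^2)}\norm{y^1-y^2}$ on some neighborhood depending on $\epsilon$, not $\skalp{h(y^1)-h(y^2),y^1-y^2}\ge 0$ on any fixed neighborhood. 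Without monotonicity of $h$ you cannot conclude local maximal monotonicity of $\partial q$, and the subsequent ``integration to local convexity'' step (which is itself not a result stated in \cite{PolRo96} in the form you need) does not get off the ground. To close the argument you would still have to feed in the special structure from Proposition~\ref{PropProxRegularQ}---in effect the same eigenvalue analysis the paper performs---or else fall back on the coderivative criterion of \cite{PolRo98}, at which point the detour through \eqref{EqPseudoMon} buys nothing.
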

\begin{proof}
 If $\xb$ is a tilt-stable minimizer, we may conclude from \cite[Theorem 1.3]{PolRo98} that the mapping $M_\gamma$ is a single-valued Lipschitzian localization of $\partial q^{-1}$ around $(0,\xb)$ so that $\partial q$ is strongly metrically regular around $(\xb,0)$ and ${\rm tilt\;}(q,\xb)=\reg \partial q(\xb,0)=\scdreg \partial q(\xb,0)$.
 By \cite[Theorem 1.3]{PolRo98}, statement (i) is equivalent to the condition
 \begin{enumerate}
   \item[(iii)] The coderivative $D^*\partial q(\xb,0)$ is positive definite in the sense that
   \begin{equation}\label{EqPosDefCoder}
     \skalp{v^*,u^*}>0\quad\mbox{whenever}\quad u^*\in D^*F(\xb,\yb)(v^*),\ v^*\not=0.
   \end{equation}
 \end{enumerate}
So it suffices to prove the equivalence (ii)$\Leftrightarrow$(iii).\par
 {\em Proof that (ii) $\Rightarrow$ (iii)}. By Proposition \ref{PropProxRegularQ}, there is some $\lambda >0$ such that $\partial q$ is graphically Lipschitzian with transformation mapping $\Phi(x,x^*)=(x+\lambda x^*,x)$. Further, any $Z\in\overline{\nabla}{}^\Phi\partial q(\xb,0)$ satisfies $\rge Z=(\rge Z)^*$ and is of the form $Z=\myvec{B\\\frac 1\lambda(I-B)}$, where $B$ is some symmetric positive semidefinite $n\times n$ matrix. Consider $Z\in \overline{\nabla}{}^\Phi\partial q(\xb,0)$ and set $B:=\pi_1(Z)$. Since $\partial q$ is \SCD regular around $(\xb,0)$, the matrix $\pi_2(Z)=\frac 1\lambda (I-B)$ is nonsingular by Proposition \ref{PropC_L} and $C_{\rge Z}=\lambda B(I-B)^{-1}$. If the eigenvalues of $B$ are denoted by $\mu_1,\ldots,\mu_n$, the eigenvalues of $C_L$ are $\lambda\mu_i/(1-\mu_i)$, $i=1,\ldots,n$, and, together with $\mu_i\geq 0$, we conclude that $C_L$ is positive semidefinite if and only if $\max \mu_i =\norm{B}<1$. Since $\overline{\nabla}{}^\Phi \partial q(\xb;0)$ is compact, it follows that $\eta:=\max\{\norm{\pi_1(Z)}\mv Z\in \overline{\nabla}{}^\Phi \partial q(\xb;0)\}<1$. Now consider $(v^*,u^*)\in \gph D^*F(\xb,\yb)$. By Proposition \ref{PropInclStrDer} there is some $\bar Z\in\co\overline{\nabla}{}^\Phi\partial q(\xb,0)$ such that $(v^*,u^*)\in(\rge \bar Z)^*$. The matrix $\bar B:=\pi_1(\bar Z)$ is a convex combination of symmetric positive semidefinite matrices $B_i$ satisfying $\norm{B_i}\leq \eta$. Thus $\bar B$ is  symmetric positive semidefinite and $\norm{\bar B}\leq \eta <1$. By taking into account $\pi_2(\bar Z)=\frac 1\lambda(I-\bar B)$,  $(\rge \bar Z)^\perp=\rge(\frac 1\lambda(I-\bar B,-\bar B)$ and $(\rge \bar Z)^*=\rge\bar Z$ follows. Thus we may find some $p\in\R^n$ with $(v^*,u^*)=(\bar Bp, \frac 1\lambda(I-\bar B)p$ to obtain
 \[\skalp{v^*,u^*}=\frac 1\lambda p^T\bar B(I-\bar B)p.\]
 The matrix $\bar B(I-\bar B)$ is symmetric and has eigenvalues $\mu_i(1-\mu_i)$, where $\mu_1,\ldots,\mu_n$ are the eigenvalues of $\bar B$. Since $0\leq\mu_i\leq\eta<1$, $i=1,\ldots,n$, the matrix $\bar B(I-\bar B)$ is positive semidefinite implying $\skalp{v^*,u^*}\geq 0$. Further, $p^T\bar B(I-\bar B)p$ vanishes if and only if $p$ is a linear combination of eigenvectors associated with the zero eigenvalues of $\bar B$, i.e., $\bar Bp=v^*=0$ and \eqref{EqPosDefCoder} follows .\par
{\em Proof that (iii) $\Rightarrow$ (ii)}.
Condition \eqref{EqPosDefCoder} implies that the Mordukhovich criterion \eqref{EqMoCrit} is fulfilled and we may conclude that $\partial q$ is \SCD regular around $(\xb,0)$. Further, for every $L\in\Sp F(\xb,0)$ we have $L^*=\rge(C_L^T,I)\subseteq \gph D^*\partial q(\xb,0)$ by Proposition \ref{PropC_L} and Lemma \ref{LemSCDname} and therefore $(C_L^Tp,p)\in \gph D^*\partial q(\xb,0)$ $\forall p$. From \eqref{EqPosDefCoder} we deduce $\skalp{C_L^Tp,p}>0$ for all $p$ with $C_L^Tp\not=0$ and the positive semidefiniteness of $C_L^T$ and $C_L$ follows.
\end{proof}
\begin{example}Consider again the mapping $F:\R^2\to\R^2$ from Example \ref{ExSubspReg}. If $h=\nabla \phi$ for some potential $\phi:\R^n\to\R$, we see  that the inclusion $0\in F(x)$ describes in fact the first-order optimality condition for the optimization problem
\[\min_{x\in C}\frac 12 x_1^2-\frac 12 x_2^2+\phi(x).\]
Since $C_{TL_1}$ is not positive semidefinite, we conclude from Theorem \ref{ThTiltStab} that $\xb=0$ is not a tilt stable local minimizer. This is also in accordance with \cite[Theorem 4.5]{PolRo98}.
\end{example}

\section{Conclusion}
   Subspaces contained in the graph of the limiting coderivative may definitely serve as a basis for
construction of suitable local approximations in the broad class of SCD multifunctions. It came to us,
however, as a surprise that these subspaces and their counterparts in case of the limiting outer graphical
derivative contain a lot of information about stability behavior of the considered mappings.
    The developed theory makes use of notions, mimicking the generalized derivatives and coderivatives
in the ''standard'' generalized differential calculus. However, their structure is, in most cases, somewhat simpler when
compared with the standard notions and so the derived new characterization of strong metric
(sub)regularity and tilt stability are typically easier to work with.
   Finally, let us point out that the property of strong metric subregularity around the reference point,
characterized via the subspaces contained in the limiting outer graphical derivative, seems to be a weak
stability property ensuring the locally superlinear convergence of the semismooth* Newton
method.

\section*{Acknowledgements}
The research of the first author was  supported by the Austrian Science Fund
(FWF) under grant P29190-N32. The  research of the second author was supported by the Grant Agency of the
Czech Republic, Project 21-06569K, and the Australian Research Council, Project DP160100854.
Further the authors would like to express their gratitude to an anonymous reviewer and to M.~Fabi\'an for careful reading and many important suggestions.

\end{document}